\documentclass[10pt]{article}

\usepackage[T1]{fontenc}    % Output font encoding
\usepackage[utf8]{inputenc} % Input encoding (legacy but harmless)
\usepackage{lmodern}        % Latin Modern fonts
\usepackage{microtype}      % Improved justification, kerning, and line breaking
\usepackage{bm}             % Bold math symbols

\usepackage[margin=1in]{geometry} % Set page margins and layout

\usepackage{amsmath,amsthm,amssymb,mathtools} % Math environments/symbols

\usepackage{graphicx} % Include graphics
\graphicspath{{images/}}

\usepackage[shortlabels]{enumitem} % Control list formatting

\usepackage{algorithm}      % Floating algorithm environment (captioned like figures/tables)
\usepackage{algpseudocode}  % Structured pseudocode commands built on algorithmicx

\usepackage[section]{placeins} % Provides \FloatBarrier to prevent floats entering sensitive regions 

\usepackage{xcolor}
\usepackage{tabularray}
\UseTblrLibrary{booktabs}
\usepackage{tcolorbox} % Colored/shaded boxes
\tcbuselibrary{skins,breakable} % Enhanced drawing + page breaks

\tcbset{
    coolboxgrey/.style={
        enhanced,
        sharp corners,
        breakable,                % Allows page break
        borderline west={2pt}{0pt}{gray},
        colback=gray!10,          % Background color
        colframe=gray!10,         % Frame (border) color
        boxrule=0pt,
        before upper={
            \postdisplaypenalty=10000\relax
            \setlength{\abovedisplayskip}{6pt}
            \setlength{\belowdisplayskip}{6pt}
            \setlength{\abovedisplayshortskip}{4pt}
            \setlength{\belowdisplayshortskip}{4pt}
        },
    },
    coolboxgreydef/.style={
        coolboxgrey,
        fontupper=\normalfont,
    }
}

\newtcolorbox{whitetablebox}{
    enhanced,
    sharp corners,
    colback=white,
    colframe=white,
    boxrule=0pt,
    boxsep=0pt,
    left=0pt,
    right=0pt,
    top=0pt,
    bottom=0pt,
    before skip=8pt,
    after skip=8pt,
}

\usepackage{titlesec} % Customize section headings

\titleformat{\section}[runin]
    {\normalfont\bfseries}
    {\thesection.}
    {0.5em}
    {}
    [.]

\titleformat{\subsection}[runin]
    {\normalfont\bfseries}
    {\thesubsection.}
    {0.5em}
    {}
    [.]

\usepackage[
    backend=biber,
    style=numeric,
    maxbibnames=99,        % Show all authors, never truncate to et al.
    sorting=nyt,           % Sort by name, year, title
    giveninits=true,
    sortcites=true
]{biblatex}

\renewbibmacro*{volume+number+eid}{%
    \printfield{volume}%
    \iffieldundef{number}
        {}
        {\printtext[parens]{\printfield{number}}}%
    \setunit{\addcomma\space}%
    \printfield{eid}%
}

\usepackage{hyperref} % Clickable links and cross-references
\hypersetup{
    colorlinks,
    citecolor = blue,
    linkcolor = blue,
    urlcolor  = blue
}

\newtheorem{theorem}{Theorem}[section]
\newtheorem{proposition}[theorem]{Proposition}
\newtheorem{lemma}[theorem]{Lemma}

\theoremstyle{definition}

\newtheorem{ass}{Assumption}
\newtheorem{remark}[theorem]{Remark}

\tcolorboxenvironment{theorem}{coolboxgrey}
\tcolorboxenvironment{proposition}{coolboxgrey}
\tcolorboxenvironment{lemma}{coolboxgrey}
\tcolorboxenvironment{corollary}{coolboxgrey}
\tcolorboxenvironment{remark}{coolboxgrey}
\tcolorboxenvironment{problem}{coolboxgrey}
\tcolorboxenvironment{ass}{coolboxgrey}

\tcolorboxenvironment{definition}{coolboxgreydef}
\tcolorboxenvironment{notation}{coolboxgreydef}

\newcommand{\N}{\mathbb{N}}

\newcommand{\R}{\mathbb{R}}

\newcommand{\cH}{\mathcal{H}}
\newcommand{\cG}{\mathcal{G}}
\newcommand{\cE}{\mathcal{E}}
\newcommand{\infconv}{\mathbin{\square}} % Infimal convolution symbol

\DeclareRobustCommand{\rph}[1]{\mathrel{\phantom{#1}}} % For proper alignment w.r.t. relation symbols

\DeclareMathOperator{\zer}{zer}
\DeclareMathOperator{\prox}{prox}

\DeclareMathOperator{\argmin}{argmin}
\DeclareMathOperator{\dist}{dist}
\DeclareMathOperator{\graph}{gra}

\allowdisplaybreaks

\title{Regularized extragradient method for structured bilevel optimization in continuous and discrete time}
\author{
Radu Ioan Bo\c{t}\thanks{Faculty of Mathematics, University of Vienna,
Oskar-Morgenstern-Platz 1, 1090 Vienna, Austria.
\texttt{emails:}
\{radu.bot, enis.chenchene, david.alexander.hulett\}@univie.ac.at}
\and
Enis Chenchene\footnotemark[1]
\and
David A. Hulett\footnotemark[1]
}
\date{}

\begin{document}

% --------------------------------------------------------------
%                         Start here
% --------------------------------------------------------------
\maketitle
\begin{abstract}
In a real Hilbert space, we study a bilevel optimization problem that consists in minimizing an outer convex function over the zero set of a maximally monotone operator. In the smooth setting, where the outer objective is convex and Fr\'echet differentiable and the inner operator is single-valued, continuous and monotone, we associate with the problem a first-order dynamical system that can be viewed as a monotone flow applied to a dynamically regularized operator. Under suitable geometric conditions on the inner problem --- either a weak Attouch–Czarnecki-type integrability condition or the stronger assumption of sharpness --- we establish last-iterate convergence rates for both the outer and inner residuals, together with weak convergence of the trajectories to optimal solutions of the bilevel problem. In the smooth+nonsmooth setting, we enrich the outer objective with a proper, convex, and lower semicontinuous function, while the inner operator is augmented by the subdifferential of a function with the same properties. We propose a regularized proximal-extragradient algorithm in which both the forward and backward steps are performed with respect to dynamically regularized operators and functions, respectively. Under geometric assumptions on the inner problem analogous to those in the smooth setting, we establish last-iterate convergence rates for both the outer and inner residuals, together with weak convergence of the iterates to optimal solutions of the bilevel problem.
\end{abstract}

\noindent \textbf{Key Words.} bilevel optimization, Tikhonov regularization, extragradient method, convergence rates, Lyapunov analysis, diagonal scheme\par\medskip

\noindent \textbf{AMS subject classification.} 90C25, 91A65, 37L05, 65K10, 46N10  

\section{Introduction}
In this work, we will be addressing the structured bilevel optimization problem 
\begin{equation}\label{eq:bilevel-inclusion-intro}
    \begin{aligned}
        &\text{minimize} & & H(x) := h(x) + \hat{h}(x),\\
        &\text{subject to } & & 0 \in V(x) + \partial \hat{f}(x) 
    \end{aligned}
\end{equation}
where $\cH$ is a real Hilbert space, $h : \cH \to \R$ is a convex and continuously Fr\'echet differentiable function, $\hat{f}, \hat{h} : \cH \to \overline{\R}$ are proper, convex and lower semicontinuous functions, and $V : \cH \to \cH$ is a continuous and monotone operator. More detailed standing assumptions on the problem will be stated later, as they differ between the continuous-time and discrete-time settings. 

Problem \eqref{eq:bilevel-inclusion-intro} belongs to the class of simple bilevel problems, in which an outer criterion selects a single decision variable from among the solutions of an inner problem. In the potential case $V=\nabla f$, the problem reduces to the composite convex bilevel program
\begin{equation*}
    \begin{aligned}
        &\text{minimize} & & h(x) + \hat{h}(x)\\
        &\text{subject to } & & x \in \argmin(f + \hat{f}), 
    \end{aligned}
\end{equation*}
whereas a general monotone operator $V$ allows the inner problem to take the form of a variational inequality or, more generally, a monotone inclusion. Thus, our model lies at the intersection of convex bilevel optimization, hierarchical variational inequalities, and equilibrium selection. Such formulations arise naturally in optimal solution selection, Nash equilibrium selection, mathematical programs with equilibrium constraints, and optimal control problems subject to variational inequality constraints; see, among others, \cite{FacchineiPang2003, DempeDinhDuttaPandit2021, KaushikYousefian2021, LuoPangRalph1996, SamadiYousefian2025}.

A classical approach to these problems is based on a slowly vanishing Tikhonov, or viscosity, regularization. Suppose for the moment that we are in the purely smooth setting, i.e., $\hat{f} \equiv 0$, $\hat{h} \equiv 0$. The idea is to replace the inner operator by the regularized operator
\[
    V + \varepsilon \nabla h
\]
and let $\varepsilon\downarrow 0$ along the trajectory or the iterates, depending on whether one considers the continuous- or discrete-time setting, respectively. This idea goes back to Tikhonov's regularization method \cite{Tikhonov1963} and was further developed in the context of bilevel optimization and monotone operator theory in works by Cabot \cite{Cabot2005}, Solodov \cite{Solodov2007-2}, Attouch and Czarnecki \cite{AttouchCzarnecki2010}, among others \cite{ AttouchCabotCzarnecki2018, AttouchCzarneckiPeypouquet2011, BotCsetnek2014}. A key requirement in the analysis is that the regularization parameter vanish slowly enough to retain the influence of the outer objective, yet fast enough to enforce asymptotic feasibility for the inner problem. The Attouch--Czarnecki summability/integrability condition, introduced in \cite{AttouchCzarnecki2010}, provides a precise formulation of this balance and has since become a central tool in the convergence analysis of diagonal methods.

For convex bilevel optimization, several recent works have developed first-order methods with convergence guarantees under increasingly weak assumptions. The nonsmooth and non-strongly convex setting has been investigated, among others, by Doron and Shtern \cite{DoronShtern2023}, Merchav and Sabach \cite{MerchavSabach2023}, Latafat, Themelis, Villa and Patrinos \cite{LatafatThemelisVillaPatrinos2025}, Shtern and Taiwo \cite{ShternTaiwo2025}, and Bo\c t, Chenchene, Csetnek and Hulett \cite{BotChencheneCsetnekHulett2025}. Projection-free and online optimization approaches have also been proposed in \cite{GiangTranHoNguyenLee2025, ShenHoNguyenKilincKarzan2023}. These works primarily exploit the potential structure of both the outer and inner problems, namely, their formulation in terms of functions rather than general operators. In contrast, the present work accommodates a monotone, possibly non-potential, operator $V$ in the inner problem, which calls for a different measure of inner optimality.

The variational inequality literature is also closely related. Extragradient and mirror-prox methods are the standard first-order tools for variational inequalities governed by Lipschitz continuous and monotone operators, beginning with Korpelevich \cite{Korpelevich1976} and Nemirovski \cite{Nemirovski2004}, with further developments such as \cite{Malitsky2020, HsiehIutzelerMalickMertikopoulos2019}. In the hierarchical setting, one seeks a solution to an outer variational inequality over the solution set of an inner one. Algorithms for such nested problems have been studied, for instance, in \cite{ThongTrietLiDong2020,VanHieuMoudafi2021,LamparielloPrioriSagratella2022,SamadiYousefian2025,AlvesChenFukuda2025,MarschnerStaudigl2025,DvurechenskyMarschnerShternStaudigl2026}. Our nonsmooth setting can be viewed as a structured special case of the framework considered by Dvurechensky, Marschner, Shtern and Staudigl \cite{DvurechenskyMarschnerShternStaudigl2026}. Whereas their outer problem is formulated as a hemi-variational inequality, ours is governed by a smooth+nonsmooth convex objective. This additional structure provides access to a natural outer residual, enabling us to establish last-iterate convergence rates, in contrast to the convergence guarantees in terms of restricted gap functions obtained in \cite{DvurechenskyMarschnerShternStaudigl2026}.

Our continuous-time analysis also relies on the Fitzpatrick function associated with the inner monotone operator. Introduced in \cite{Fitzpatrick1988}, the Fitzpatrick function has become a standard convex-analytic tool for representing monotone operators; see also \cite{BorweinDutta2016}. In the purely smooth setting, it provides a natural framework for expressing the Attouch--Czarnecki condition, see also \cite{BotCsetnek2014, BotCsetnek2016JMAA}, while in the composite setting it is replaced by an analogous construction. In the potential case, the resulting condition reduces to the more familiar Fenchel-type summability condition involving the conjugate of the inner objective gap.

\subsection{Contributions}

We study both a continuous-time dynamical system and a fully discrete proximal-extragradient method for \eqref{eq:bilevel-inclusion-intro}. In the continuous- and discrete-time settings, respectively, the regularization parameter is chosen to decay polynomially according to
\[
    \varepsilon(t)=\frac{c}{t^\delta},
    \qquad
    \varepsilon_k=\frac{c}{(k + a)^\delta},
\]
for an exponent $\delta>0$, and constants $c, a > 0$. The main contributions are the following.

$\bullet$ \textbf{A continuous-time system.} First, in the smooth case $\hat f=\hat h=0$, that is, when we minimize $h$ subject to the solutions of the monotone equation $V(x) = 0$, we associate with the problem the non-autonomous monotone flow dynamics
\[
    \dot x(t)+\frac{d}{dt}\big(V(x(t))+\varepsilon(t)\nabla h(x(t))\big)
    +V(x(t))+\varepsilon(t)\nabla h(x(t))=0.
\]
Under a mild standing assumption, we derive ergodic convergence statements for the trajectories generated by this system. Under further geometric conditions on the inner level, namely, a sharpness condition or the weaker Attouch--Czarnecki integrability assumption, we establish weak convergence of the generated trajectories to solutions of the bilevel problem, together with convergence rates
\[
    \| V(x(t))\| = \mathcal{O}\left(\frac{1}{t^{\delta}}\right), \quad \langle x(t) - x^{*}, V(x(t))\rangle = o \left(\frac{1}{\sqrt{t}}\right), \quad | h(x(t)) - h(x^{*})| = o\left(\frac{1}{t^{\frac{1}{2} - \delta}}\right) 
\]
as $t\to +\infty$.

$\bullet$ \textbf{A regularized extragradient method.} In the discrete setting, we allow for nonsmooth terms at both the outer and inner levels. Namely, we consider \eqref{eq:bilevel-inclusion-intro}, where we minimize $H =h + \hat h$ subject to the set of solution of the monotone inclusion $0 \in V(x) + \partial \hat f(x)$. We propose a regularized proximal-extragradient algorithm in which the forward operator and the backward function are regularized simultaneously
\begin{equation*}
   (\forall k \geq 0) \quad \left\{
        \begin{aligned}
            y_{k} &= \prox_{s(\hat{f} + \varepsilon_{k} \hat{h})} \Bigl( x_{k} - s\Bigl( V(x_{k}) + \varepsilon_{k} \nabla h(x_{k})\Bigr)\Bigr), \\
            x_{k + 1} &= \prox_{s(\hat{f} + \varepsilon_{k} \hat{h})} \Bigl( x_{k} - s \Bigl( V(y_{k}) + \varepsilon_{k} \nabla h(y_{k})\Bigr)\Bigr).
        \end{aligned}
    \right.
\end{equation*}
Again, under suitable geometric conditions for the discrete-time setting, analogous to those in the continuous-time case, we establish weak convergence of the iterates to solutions of the bilevel problem, together with convergence rates
\begin{gather*}
    \dist(0, V(x_{k}) + \partial \hat f(x_k)) = \mathcal{O}\left( \frac{1}{k^{\delta}}\right), \quad \langle x_{k} - x^{*}, V(x_{k})\rangle + \hat{f}(x_{k}) - \hat{f}(x^{*}) = o\left(\frac{1}{\sqrt{k}}\right), \\
    | H(x_{k}) - H(x^{*})| = o\left( \frac{1}{k^{\frac{1}{2} - \delta}}\right)
\end{gather*}
as $k\to +\infty$.

\subsection{A motivating bilevel optimization problem}

One particular class of bilevel optimization problems that motivates the investigations in this paper consists of problems with a structured convex minimization problem at the lower level
\begin{equation}\label{eq:inner-level-Fenchel-dual}
    \begin{aligned}
        &\text{minimize} & & H(x) := h(x) + \hat{h}(x),\\
        &\text{subject to } & & x \in \argmin_{z} \{ f(z) + g(Az) + m(z)\}, 
    \end{aligned}
\end{equation}
where $\cG$ is another real Hilbert space, $h, m : \cH \to \R$ are convex and Fr\'echet differentiable functions, $f, \hat{h} : \cH \to \overline{\R}$ and $g : \cG \to \overline{\R}$ are proper, convex and lower semicontinuous functions, and $A : \cH \to \cG$ is a linear continuous operator. Under a mild regularity condition (see, for instance, \cite{BauschkeCombettes2017, Bot2010}), a point $x^{*} \in \cH$ is an optimal solution to the inner level if and only if there exists a Lagrange multiplier $\lambda^{*}\in \cG$ such that 
\[
    \left\{
        \begin{aligned}
            0 &\in \partial f(x^{*}) + A^{*}\lambda^{*} + \nabla m(x^{*}), \\
            Ax^{*} &\in \partial g^{*}(\lambda^{*}),
        \end{aligned}
    \right.
\]
It follows that, provided such a regularity condition holds,  \eqref{eq:inner-level-Fenchel-dual} is equivalent to 
\begin{equation}\label{eq:Fenchel-bilevel-equivalent}
    \begin{aligned}
        &\text{minimize} & & \bm{H}(x, \lambda) := \bm{h}(x, \lambda) + \hat{\bm{h}}(x, \lambda),\\
        &\text{subject to } & & 0 \in \bm{V}(x, \lambda) + \partial\hat{\bm{f}}(x, \lambda)
    \end{aligned}
\end{equation}
where 
\begin{gather*}
    \bm{h}(x, \lambda) := h(x), \ \hat{\bm{h}}(x, \lambda) := \hat{h}(x), \\
    \bm{V}(x, \lambda) := \Bigl( \nabla m(x) + A^{*}\lambda, -Ax\Bigr), \ \hat{\bm{f}}(x, \lambda) := f(x) + g^{*}(\lambda), \ \ \text{hence} \ \ \partial \hat{\bm{f}}(x, \lambda) = \Bigl( \partial f(x), \partial g^{*}(\lambda)\Bigr).
\end{gather*}
In Subsection \ref{subsec:fenchel-bilevel}, we apply our regularized proximal-extragradient algorithm to \eqref{eq:Fenchel-bilevel-equivalent} and derive convergence rates for the corresponding optimality measures.

\section{A dynamical system attached to a smooth bilevel problem}
We want to address the following  bilevel optimization problem
\begin{equation}\label{eq:bilevel-where-inner-is-monotone-equation}
    \begin{aligned}
        &\text{minimize} & &h(x) \\
        &\text{subject to } & &V(x) = 0, 
    \end{aligned}
\end{equation}
where throughout this section we make the following standing assumption on $h$ and $V$. 
\begin{ass}\label{ass:standing-cont}
    $h : \cH \to \R$ is convex, continuously Fr\'echet differentiable, and bounded below, and $\nabla h$ is bounded on bounded subsets of $\cH$. The operator $V : \cH \to \cH$ is continuous and monotone.
\end{ass}
\noindent If $\nabla h$ is bounded on bounded subsets of $\cH$, then so is $h$. In particular, any Lipschitz continuous gradient $\nabla h$ is bounded on bounded subsets of $\cH$.

\subsection{The dynamical system}
We will be applying the monotone flow dynamics to the dynamically regularized operator
\[
    \Phi_{t}(x) := V(x) + \varepsilon(t)\nabla h(x),  
\]
where the regularization parameter is given by $ \varepsilon(t) := \frac{c}{t^{\delta}}$, with $c, \delta > 0$ and $t > 0$. The idea is that $\varepsilon(t) > 0$ goes to zero monotonically as $t\to \infty$, but not too fast. 

Associating with $V(x) = 0$ the monotone flow
\[
    \dot{x}(t) + V(x(t)) = 0,
\]
it is known that only the ergodic trajectory is guaranteed to converge weakly to a zero of $V$ (\cite{BaillonBrezis1976}), whereas the trajectory itself may, in general, fail to converge (\cite{CominettiPeypouquetSorin2008}).

Attouch and Svaiter (\cite{AttouchSvaiter2011}) associated with the monotone equation $V(x) = 0$ the monotone flow with a correction term
\begin{equation}\label{eq:first-order-system-just-m}
    \dot{x}(t) + \frac{d}{dt}V(x(t)) + V(x(t)) = 0.
\end{equation}
In addition to establishing existence and uniqueness of the trajectory, they proved that the residual $\|V(x(t))\|$ converges to zero and that $x(t)$ converges weakly to a zero of $V$. In fact, In fact, the residual satisfies the sharper asymptotic estimate $\|V(x(t))\| = o \left (\frac{1}{\sqrt{t}} \right)$ as $t \to + \infty$.

Continuing along this line of thought, we will investigate, for $t\geq t_{0} > 0$, the long-time behavior of the dynamics
\begin{equation}\label{eq:first-order-system-regularized-operator}
    \dot{x}(t) + \frac{d}{dt} \Phi_{t}(x(t)) + \Phi_{t}(x(t)) = 0.
\end{equation}

\subsection{An energy function and preliminary results}
In connection with \eqref{eq:first-order-system-regularized-operator}, we will study the dissipative properties and asymptotic behavior of the energy function
\[
    \cE_{\lambda}(t) := \cE_{1}(t) + \lambda\cE_{2}(t), 
\]
where
\begin{align*}
    \cE_{1}(t) &:= \frac{1}{2} \| x(t) - x^{*}\|^{2} + \langle x(t) - x^{*}, V(x(t))\rangle + \varepsilon(t) \langle x(t) - x^{*}, \nabla h(x(t))\rangle + \dot{\varepsilon}(t) (h(x(t)) - h(x^{*})), \\
    \cE_{2}(t) &:= \frac{t}{2} \| \Phi_{t}(x(t))\|^{2} + \frac{1}{2} \Bigl\| x(t) - x^{*} + \Phi_{t}(x(t))\Bigr\|^{2} + t\dot{\varepsilon}(t) (h(x(t)) - h(x^{*})),
\end{align*}
and $x^{*}$ is an optimal solution to the bilevel optimization problem \eqref{eq:bilevel-where-inner-is-monotone-equation} and $\lambda >0$. The following lemma establishes lower bounds and descent properties for $\cE_{1}$ and $\cE_{2}$.
\begin{lemma}\label{lem:full-energy-cont}
    Let $x : [t_{0}, +\infty) \to \cH$ be a solution trajectory to \eqref{eq:first-order-system-regularized-operator}, let $x^{*}$ be an optimal solution to \eqref{eq:bilevel-where-inner-is-monotone-equation}, and fix $0 < \lambda < \frac{1}{\delta}$. Then, for every $t\geq t_{0}$, the following are true:
    \begin{gather}
        \cE_{\lambda}(t) \geq \frac{1}{2} \| x(t) - x^{*}\|^{2} + \frac{\lambda t}{2} \| \Phi_{t}(x(t))\|^{2} + \zeta_{1}(t) ( h(x(t)) - h(x^{*})), \label{eq:lower-bound-E-lambda-statement}\\
        \dot{\cE}_{\lambda}(t) + \bigl\| \dot{x}(t)\bigr\|^{2} + \frac{\lambda}{2} \| \Phi_{t}(x(t))\|^{2} + \zeta_{2}(t) (h(x(t)) - h(x^{*})) \leq - (1 + \lambda)\langle x(t) - x^{*}, V(x(t))\rangle, \label{eq:descent-property-E-lambda-statement}
    \end{gather} 
    where 
    \[
        \zeta_{1}(t) := \varepsilon(t) + \dot{\varepsilon}(t) + \lambda t \dot{\varepsilon}(t), \quad \zeta_{2}(t) := \varepsilon(t) - \ddot{\varepsilon}(t) + \lambda \bigl(\varepsilon(t) - \dot{\varepsilon}(t) - t\ddot{\varepsilon}(t)\bigr), 
    \]
    and we have $\zeta_{1}(t) \asymp \varepsilon(t)$, $\zeta_{2}(t) \asymp \varepsilon(t)$ as $t\to +\infty$, where ``$\asymp$'' denotes equality of asymptotic order.
\end{lemma}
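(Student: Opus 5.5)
The plan is to differentiate along the trajectory, using the system \eqref{eq:first-order-system-regularized-operator} in the form $\frac{d}{dt}\Phi_t(x(t)) = -\dot x(t) - \Phi_t(x(t))$. Two facts about $x^*$ are used throughout: $V(x^*)=0$, so monotonicity gives $\langle x-x^*,V(x)\rangle\geq 0$, and convexity gives $\langle x-x^*,\nabla h(x)\rangle \geq h(x)-h(x^*)$. Note that $h(x(t))-h(x^*)$ may be negative, since $x(t)$ need not be feasible. This is why all $h$-terms are kept as $\zeta_i(t)(h(x(t))-h(x^*))$ and never dropped. I assume the solution is regular enough that $x$, $V(x(\cdot))$ and $\nabla h(x(\cdot))$ are absolutely continuous (locally), so that all derivatives exist almost everywhere.

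\textbf{Lower bound.} In $\cE_1$, drop $\langle x-x^*,V(x)\rangle\geq 0$ and bound $\varepsilon\langle x-x^*,\nabla h(x)\rangle \geq \varepsilon (h(x)-h(x^*))$. This gives $\cE_1\geq \frac12\|x-x^*\|^2+(\varepsilon+\dot\varepsilon)(h(x)-h(x^*))$. In $\cE_2$, drop the middle square, which leaves $\frac t2\|\Phi_t\|^2+t\dot\varepsilon(h-h^*)$. Adding $\lambda$ times this to the bound for $\cE_1$ yields \eqref{eq:lower-bound-E-lambda-statement} with $\zeta_1$.

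\textbf{Descent of $\cE_1$.} Write $\cE_1=\frac12\|x-x^*\|^2+\langle x-x^*,\Phi_t(x)\rangle+\dot\varepsilon(h-h^*)$. Its derivative is
$$\langle \dot x, x-x^*\rangle+\langle \dot x,\Phi_t\rangle+\langle x-x^*,\tfrac{d}{dt}\Phi_t\rangle+\ddot\varepsilon(h-h^*)+\dot\varepsilon\langle\nabla h,\dot x\rangle.$$
The first and third terms combine via the dynamics into $-\langle x-x^*,\Phi_t\rangle$. For the second, substitute $\Phi_t=-\dot x-\frac{d}{dt}\Phi_t$. Then expand $\frac{d}{dt}\Phi_t = \frac{d}{dt}V(x)+\dot\varepsilon\nabla h(x)+\varepsilon\frac{d}{dt}\nabla h(x)$. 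Monotonicity of $V$ and of $\nabla h$ gives $\langle\dot x,\frac{d}{dt}V(x)\rangle\geq0$ and $\langle\dot x,\frac{d}{dt}\nabla h(x)\rangle\geq0$, as limits of difference quotients. Hence $\langle\dot x,\Phi_t\rangle\leq -\|\dot x\|^2-\dot\varepsilon\langle\dot x,\nabla h\rangle$, and the $\dot\varepsilon\langle\nabla h,\dot x\rangle$ terms cancel. Applying convexity to $-\varepsilon\langle x-x^*,\nabla h\rangle$ then gives
$$\dot\cE_1\leq-\|\dot x\|^2-\langle x-x^*,V(x)\rangle-(\varepsilon-\ddot\varepsilon)(h-h^*).$$

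\textbf{Descent of $\cE_2$.} For the first term, $\frac{d}{dt}\frac t2\|\Phi_t\|^2=\frac12\|\Phi_t\|^2+t\langle\Phi_t,\frac{d}{dt}\Phi_t\rangle$. Substituting $\Phi_t=-\dot x-\frac d{dt}\Phi_t$ and using the same monotonicity argument bounds this by $\frac12\|\Phi_t\|^2-t\dot\varepsilon\langle\dot x,\nabla h\rangle$. For the second term, the square has derivative $\langle x-x^*+\Phi_t,-\Phi_t\rangle=-\langle x-x^*,\Phi_t\rangle-\|\Phi_t\|^2$. For the third term, the derivative of $t\dot\varepsilon(h-h^*)$ is $(\dot\varepsilon+t\ddot\varepsilon)(h-h^*)+t\dot\varepsilon\langle\nabla h,\dot x\rangle$, whose last part cancels. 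Using convexity again,
$$\dot\cE_2\leq-\tfrac12\|\Phi_t\|^2-\langle x-x^*,V\rangle-(\varepsilon-\dot\varepsilon-t\ddot\varepsilon)(h-h^*).$$
Adding $\dot\cE_1+\lambda\dot\cE_2$ yields \eqref{eq:descent-property-E-lambda-statement} with $\zeta_2$.

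\textbf{Asymptotics and main obstacle.} For $\varepsilon=ct^{-\delta}$ we have $t\dot\varepsilon=-\delta\varepsilon$, $\dot\varepsilon=O(\varepsilon/t)$, $t\ddot\varepsilon=\delta(\delta+1)\varepsilon/t$ and $\ddot\varepsilon=O(\varepsilon/t^2)$. Hence $\zeta_1=(1-\lambda\delta)\varepsilon+O(\varepsilon/t)$, which is positive of order $\varepsilon$ precisely because $\lambda<1/\delta$, and $\zeta_2=(1+\lambda)\varepsilon+O(\varepsilon/t)$. The only delicate step is the differentiability and monotonicity-sign argument for $\langle\dot x,\frac{d}{dt}V(x(t))\rangle\ge0$. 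I would handle it through difference quotients, $\langle x(t+h)-x(t),V(x(t+h))-V(x(t))\rangle\ge0$, divided by $h^2$ and passed to the limit at points of differentiability, and analogously for $\nabla h$.
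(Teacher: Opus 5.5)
Your proposal is correct and follows essentially the same route as the paper. For the lower bound you drop the $V$-term and the middle square. For the descent you differentiate $\cE_1$ and $\cE_2$ using the dynamics, discard the $\langle \dot x(t), \frac{d}{dt}V(x(t))\rangle$, $\langle \dot x(t), \frac{d}{dt}\nabla h(x(t))\rangle$ and $-t\|\frac{d}{dt}\Phi_t(x(t))\|^2$ terms by monotonicity, and close with the gradient inequality for $h$.

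The only differences are cosmetic: you differentiate $\langle x(t)-x^*,\Phi_t(x(t))\rangle$ as one block, and you make the difference-quotient justification explicit. Your expansion $\zeta_2=(1+\lambda)\varepsilon+\mathcal{O}(\varepsilon/t)$ is also more accurate than the paper's displayed bracket, which has a harmless sign slip in the $\delta/t$ term.
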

\begin{proof}
Let $t \geq t_0$.    In the first step, we focus on $\cE_{1}$. Dropping the nonnegative term corresponding to $V$ and applying the gradient inequality to $h$ yields
    \begin{align*}
        \cE_{1}(t) &\geq \frac{1}{2} \| x(t) - x^{*}\|^{2} + \varepsilon(t) \langle x(t) - x^{*}, \nabla h(x(t))\rangle + \dot{\varepsilon}(t) (h(x(t)) - h(x^{*})) \\
        &\geq  \frac{1}{2} \| x(t) - x^{*}\|^{2} + \bigl( \varepsilon(t) + \dot{\varepsilon}(t)\bigr) (h(x(t)) - h(x^{*})). 
    \end{align*}
    Combining the previous inequality with the definition of $\cE_{2}(t)$ yields the lower bound \eqref{eq:lower-bound-E-lambda-statement}. We now differentiate each summand of $\cE_{1}(t)$ separately. 
    \begin{align*}
        \frac{d}{dt} \frac{1}{2} \| x(t) - x^{*}\|^{2} &= \bigl\langle x(t) - x^{*}, \dot{x}(t)\bigr\rangle = - \bigl\langle x(t) - x^{*}, \Phi_{t}(x(t))\bigr\rangle - \left\langle x(t) - x^{*}, \frac{d}{dt} \Phi_{t}(x(t))\right\rangle \\
        &= - \langle x(t) - x^{*}, V(x(t))\rangle - \varepsilon(t) \langle x(t) - x^{*}, \nabla h(x(t))\rangle - \left\langle x(t) - x^{*}, \frac{d}{dt} V(x(t))\right\rangle \\
        &\rph{=} - \left\langle x(t) - x^{*}, \frac{d}{dt} \bigl( \varepsilon(t) \nabla h(x(t))\bigr)\right\rangle, \\
        \frac{d}{dt} \langle x(t) - x^{*}, V(x(t))\rangle &= \bigl\langle \dot{x}(t), V(x(t))\bigr\rangle + \left\langle x(t) - x^{*}, \frac{d}{dt} V(x(t))\right\rangle \\
        &= \left\langle \dot{x}(t), - \dot{x}(t) - \varepsilon(t) \nabla h(x(t)) - \dot{\varepsilon}(t) \nabla h(x(t)) - \varepsilon(t) \frac{d}{dt} \nabla h(x(t)) - \frac{d}{dt} V(x(t))\right\rangle \\
        &\rph{=} + \left\langle x(t) - x^{*}, \frac{d}{dt} V(x(t))\right\rangle \\
        &= - \bigl\| \dot{x}(t)\bigr\|^{2} - \bigl( \varepsilon(t) + \dot{\varepsilon}(t)\bigr) \bigl\langle \dot{x}(t), \nabla h(x(t))\bigr\rangle - \varepsilon(t) \left\langle \dot{x}(t), \frac{d}{dt} \nabla h(x(t))\right\rangle \\
        &\rph{=} - \left\langle \dot{x}(t), \frac{d}{dt}V(x(t))\right\rangle + \left\langle x(t) - x^{*}, \frac{d}{dt} V(x(t))\right\rangle,\\
        \frac{d}{dt} \bigl\langle x(t) - x^{*}, \varepsilon(t)\nabla h(x(t))\bigr\rangle &=  \varepsilon(t) \bigl\langle \dot{x}(t), \nabla h(x(t))\bigr\rangle + \left\langle x(t) - x^{*}, \frac{d}{dt} \bigl( \varepsilon(t) \nabla h(x(t))\bigr)\right\rangle, \\
        \frac{d}{dt} \dot{\varepsilon}(t) (h(x(t)) - h(x^{*})) &= \ddot{\varepsilon}(t) (h(x(t)) - h(x^{*})) + \dot{\varepsilon}(t) \bigl\langle \nabla h(x(t)), \dot{x}(t)\bigr\rangle.
    \end{align*}
    Putting everything together, canceling out the corresponding terms and using the gradient inequality to $h$ yields
    \begin{align}
        \dot{\cE}_{1}(t) &\leq  - \langle x(t) - x^{*}, V(x(t))\rangle - \bigl( \varepsilon(t) - \ddot{\varepsilon}(t)\bigr) (h(x(t)) - h(x^{*})) - \bigl\| \dot{x}(t)\bigr\|^{2} \nonumber\\
        &\rph{\leq}  - \varepsilon(t) \left\langle \dot{x}(t), \frac{d}{dt} \nabla h(x(t))\right\rangle - \left\langle \dot{x}(t), \frac{d}{dt} V(x(t))\right\rangle \nonumber\\
        &\leq - \langle x(t) - x^{*}, V(x(t))\rangle - \bigl( \varepsilon(t) - \ddot{\varepsilon}(t)\bigr) (h(x(t)) - h(x^{*})) - \bigl\| \dot{x}(t)\bigr\|^{2} \label{eq:full-energy-cont-1},
    \end{align}
    where we drop the nonpositive the inner product terms corresponding to $\nabla h$ and $V$ on account of their monotonicity. Regarding $\cE_{2}$, we have 
    \begin{align}
        \dot{\cE}_{2}(t) &= \frac{1}{2} \| \Phi_{t}(x(t))\|^{2} + t\left\langle \Phi_{t}(x(t)), \frac{d}{dt}\Phi_{t}(x(t))\right\rangle + \left\langle x(t) - x^{*} + \Phi_{t}(x(t)), \dot{x}(t) + \frac{d}{dt} \Phi_{t}(x(t))\right\rangle \nonumber\\
        &\rph{=} + \bigl( \dot{\varepsilon}(t) + t \ddot{\varepsilon}(t)\bigr) (h(x(t)) - h(x^{*})) + t\dot{\varepsilon}(t) \bigl\langle \nabla h(x(t)), \dot{x}(t)\bigr\rangle \nonumber\\ 
        &=  \frac{1}{2} \| \Phi_{t}(x(t))\|^{2} + t \left\langle - \dot{x}(t) - \frac{d}{dt} \Phi_{t}(x(t)), \frac{d}{dt} \Phi_{t}(x(t))\right\rangle \nonumber\\
        &\rph{=} - \Bigl\langle x(t) - x^{*} + \Phi_{t}(x(t)), \Phi_{t}(x(t))\Bigr\rangle + \bigl( \dot{\varepsilon}(t) + t \ddot{\varepsilon}(t)\bigr) (h(x(t)) - h(x^{*})) + t\dot{\varepsilon}(t) \bigl\langle \nabla h(x(t)), \dot{x}(t)\bigr\rangle \nonumber\\
        &=  - \frac{1}{2} \| \Phi_{t}(x(t))\|^{2} - t \left\langle \dot{x}(t), \frac{d}{dt} V(x(t))\right\rangle - t \left\| \frac{d}{dt}\Phi_{t}(x(t))\right\|^{2} - t \dot{\varepsilon}(t) \bigl\langle \dot{x}(t), \nabla h(x(t))\bigr\rangle \nonumber\\
        &\rph{=}  - t \varepsilon(t) \left\langle \dot{x}(t), \frac{d}{dt} \nabla h(x(t))\right\rangle - \langle x(t) - x^{*}, V(x(t))\rangle - \varepsilon(t) \langle x(t) - x^{*}, \nabla h(x(t))\rangle \nonumber\\
        &\rph{=} + \bigl( \dot{\varepsilon}(t) + t \ddot{\varepsilon}(t)\bigr) (h(x(t)) - h(x^{*})) + t \dot{\varepsilon}(t) \bigl\langle \nabla h(x(t)), \dot{x}(t)\bigr\rangle \nonumber\\
        &\leq - \frac{1}{2} \| \Phi_{t}(x(t))\|^{2} - \langle x(t) - x^{*}, V(x(t))\rangle - \bigl( \varepsilon(t) - \dot{\varepsilon}(t) - t \ddot{\varepsilon}(t)\bigr)(h(x(t)) - h(x^{*})), \label{eq:full-energy-cont-2}
    \end{align}
    where we arrive at the last inequality after canceling and dropping some nonpositive terms and applying the gradient inequality to $h$. After multiplying \eqref{eq:full-energy-cont-2} by $\lambda$ and adding it to \eqref{eq:full-energy-cont-1} we arrive at the desired lower bound \eqref{eq:descent-property-E-lambda-statement}. To finish the proof, note that, since $0 < \lambda < \frac{1}{\delta}$,
    \[
        \zeta_{1}(t) = \varepsilon(t) + \dot{\varepsilon}(t) + \lambda t \dot{\varepsilon}(t) = \left(1 - \lambda\delta - \frac{\delta}{t}\right) \varepsilon(t) \asymp \varepsilon(t)
    \]
    as $t\to +\infty$. Furthermore, we have 
    \[
        \zeta_{2}(t) = \varepsilon(t) - \ddot{\varepsilon}(t) + \lambda \bigl( \varepsilon(t) - \dot{\varepsilon}(t) - t\ddot{\varepsilon}(t)\bigr) = \left[ 1 - \frac{\delta(\delta + 1)}{t^{2}} + \lambda \left( 1 - \frac{\delta}{t} - \frac{\delta(\delta + 1)}{t}\right)\right] \varepsilon(t) \asymp \varepsilon(t)
    \]
    as $t\to +\infty$.
\end{proof}
We will now show some partial convergence guarantees which hold in the general setting of Assumption \ref{ass:standing-cont}. 
\begin{theorem}\label{thm:ergodic-rate-cont}
Let $x : [t_{0}, +\infty) \to \cH$ be a solution trajectory to \eqref{eq:first-order-system-regularized-operator} and let $x^{*}$ be an optimal solution to \eqref{eq:bilevel-where-inner-is-monotone-equation}. Define, for every $t \geq t_0$,
    \[
        x_{t}^{\textnormal{best}} := \argmin_{x \in \{ x(t), \bar{x}_{t}\}} h(x), \quad \text{where} \quad \bar{x}_{t} := \frac{1}{\int_{t_{0}}^{t} \zeta_{2}(s) ds} \int_{t_{0}}^{t} \zeta_{2}(s) x(s) ds, 
    \]
and $\zeta_{2}(t) \asymp \varepsilon(t)$ as $t \to +\infty$ was defined in Lemma \ref{lem:full-energy-cont}. Then, as $t\to +\infty$, we have the convergence rates as shown by the following table:
    \begin{whitetablebox}
        \begin{tblr}{
            width = \linewidth,
            colspec = {
                Q[c,m,wd=0.18\linewidth]
                X[0.90,c,m]
                X[1.35,c,m]
                X[0.95,c,m]
            },
            cells = {
                mode = math,
            },
            cell{1}{2-Z} = {
                cmd = \textstyle,
            },
            cell{2-Z}{2-Z} = {
                bg = gray!10,
                cmd = \displaystyle,
            },
            rows = {
                rowsep = 8pt,
            },
            columns = {
                colsep = 8pt,
            },
            cell{2-Z}{2-Z} = {
                bg = gray!10,
            },
            hline{3,4} = {2-Z}{1.2pt,white},
            vline{3,4} = {2-Z}{1.2pt,white},
        }
            \toprule
            &
            \| \Phi_{t}(x(t))\|
            &
            \frac{1}{t} \int_{t_{0}}^{t} \langle x(s) - x^{*}, V(x(s))\rangle ds
            &
            h(x_{t}^{\text{\textnormal{best}}}) - h(x^{*})
            \\
            \midrule
            \delta>1
            &
            \mathcal{O} \left(\frac{1}{\sqrt{t}}\right)
            &
            \mathcal{O} \left(\frac{1}{t}\right)
            &
            \text{--}
            \\
            \delta=1
            &
            \mathcal{O} \left(\sqrt{\frac{\ln t}{t}}\right)
            &
            \mathcal{O} \left(\frac{\ln t}{t}\right)
            &
            \leq
            \mathcal{O} \left(\frac{1}{\ln t}\right)
            \\
            0 < \delta < 1 
            &
            \mathcal{O}\left(\frac{1}{t^{\frac{\delta}{2}}}\right)
            &
            \mathcal{O}\left(\frac{1}{t^{\delta}}\right)
            &
            \leq \mathcal{O}\left(\frac{1}{t^{1 - \delta}}\right)
            \\
            \bottomrule
        \end{tblr}
    \end{whitetablebox}
    where we write $\leq$ to emphasize that we only obtain an upper bound for the sign-less quantity $h(x_{t}^{\textnormal{best}}) - h(x^{*})$. 
\end{theorem}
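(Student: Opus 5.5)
We integrate the descent inequality \eqref{eq:descent-property-E-lambda-statement} of Lemma \ref{lem:full-energy-cont} over $[t_0,t]$ with a fixed $0<\lambda<\frac1\delta$, and combine the result with the lower bound \eqref{eq:lower-bound-E-lambda-statement}. This gives
\[
    \cE_\lambda(t) + \int_{t_0}^t \Bigl(\|\dot x(s)\|^2 + \tfrac{\lambda}{2}\|\Phi_s(x(s))\|^2 + (1+\lambda)\langle x(s)-x^*,V(x(s))\rangle + \zeta_2(s)(h(x(s))-h(x^*))\Bigr)ds \le \cE_\lambda(t_0).
\]

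The only difficulty is that $h(x(s))-h(x^*)$ has no sign, since $x(s)$ need not be feasible. We control it from below by the assumption that $h$ is bounded below: $h(x(s))-h(x^*)\ge \inf h - h(x^*) =: -M$ with $M\ge 0$.

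\textbf{Step 1: the bound on $\cE_\lambda(t)$.} In the lower bound for $\cE_\lambda(t)$, the term $\zeta_1(t)(h(x(t))-h(x^*))\ge -M\zeta_1(t)\ge -C\varepsilon(t)$ is bounded for large $t$. The same argument bounds $\zeta_2(s)(h(x(s))-h(x^*))\ge -M\zeta_2(s)$ inside the integral. On $[t_0,\infty)$ we may also need $\zeta_1,\zeta_2\ge 0$; if they are not, we enlarge $t_0$ or absorb the compact interval into constants. Collecting terms, we get
\[
    \tfrac12\|x(t)-x^*\|^2 + \tfrac{\lambda t}{2}\|\Phi_t(x(t))\|^2 + (1+\lambda)\int_{t_0}^t\langle x(s)-x^*,V(x(s))\rangle ds + \int_{t_0}^t \zeta_2(s)(h(x(s))-h(x^*))ds \le C + 2MC'\!\int_{t_0}^t \varepsilon(s)\,ds .
\]
Each nonnegative term on the left can now be isolated, bounding the remaining $h$-terms below by $-M\zeta$. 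With $E(t):=\int_{t_0}^t\varepsilon(s)ds$, this yields:
\begin{itemize}
    \item $\|\Phi_t(x(t))\|^2 = \mathcal{O}\bigl((1+E(t))/t\bigr)$;
    \item $\frac1t\int_{t_0}^t\langle x-x^*,V(x)\rangle ds=\mathcal{O}\bigl((1+E(t))/t\bigr)$.
\end{itemize}

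\textbf{Step 2: the rates in the first two columns.} The growth of $E(t)$ depends on $\delta$:
\begin{itemize}
    \item for $\delta>1$, $E(t)$ is bounded, giving $\mathcal{O}(1/\sqrt t)$ and $\mathcal{O}(1/t)$;
    \item for $\delta=1$, $E(t)\sim c\ln t$, giving $\mathcal{O}(\sqrt{\ln t/t})$ and $\mathcal{O}(\ln t/t)$;
    \item for $\delta<1$, $E(t)\asymp t^{1-\delta}$, giving $\mathcal{O}(t^{-\delta/2})$ and $\mathcal{O}(t^{-\delta})$.
\end{itemize}
Note that the integral term involving $V$ is nonnegative by monotonicity, because $V(x^*)=0$.

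\textbf{Step 3: the ergodic estimate for $h$.} We drop all the nonnegative terms and keep $\int_{t_0}^t\zeta_2(s)(h(x(s))-h(x^*))ds\le \cE_\lambda(t_0)+M\int_{t_0}^t\zeta_1$-type contributions. This step is the delicate one. The left lower bound for $\cE_\lambda(t)$ contributes only $-M\zeta_1(t)=\mathcal{O}(\varepsilon(t))$, which is bounded, so the right-hand side is $\mathcal{O}(1)$. The key point is that the crude bound $-M\zeta_2$ must not be used for this term, since that would give only a trivial estimate.

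Jensen's inequality with weights $\zeta_2\ge0$ (for large $s$) gives $h(\bar x_t)-h(x^*)\le \mathcal{O}(1)/\int_{t_0}^t\zeta_2(s)ds$. Since $\zeta_2\asymp\varepsilon$, the denominator grows like $\ln t$ for $\delta=1$ and like $t^{1-\delta}$ for $\delta<1$. Finally, $h(x_t^{\rm best})\le h(\bar x_t)$ by the definition of $x_t^{\rm best}$, which gives the third column. For $\delta>1$ the denominator stays bounded, so no rate follows, consistent with the dash in the table.

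We expect the main obstacle to be the careful bookkeeping of the signs of $\zeta_1$ and $\zeta_2$ and of the sign-indefinite $h$-terms. If $\zeta_2$ is negative near $t_0$, we will start the averaging at a later time $t_1$ where it is positive. The contribution of $[t_0,t_1]$ is then absorbed into constants, using boundedness of the trajectory on that compact interval, which follows from the energy bound.
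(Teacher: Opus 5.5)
Your proposal is correct and follows the paper's proof. You integrate the descent inequality of Lemma~\ref{lem:full-energy-cont}, combine it with the lower bound on $\cE_\lambda(t)$, and use $h\geq\inf h$ to get the first two columns. For the third column you apply Jensen's inequality with weights $\zeta_2$. The one minor difference is in handling $\zeta_1(t)(h(x(t))-h(x^*))$. The paper absorbs it together with the Jensen term into $\bigl(\zeta_1(t)+\int_{t_0}^t\zeta_2(s)\,ds\bigr)(h(x_t^{\textnormal{best}})-h(x^*))$ via the definition of $x_t^{\textnormal{best}}$. You instead bound it below by $-M\zeta_1(t)$, which costs only an $\mathcal{O}(1)$ constant and in fact yields the same rate for $h(\bar x_t)$ itself.
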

\begin{proof}
    Choose $0 < \lambda < \frac{1}{\delta}$. After integrating \eqref{eq:descent-property-E-lambda-statement} from $t_{0}$ to $t\geq t_{0}$, using the definition of $x_{t}^{\text{best}}$, applying Jensen's inequality to $h$ and then plugging \eqref{eq:lower-bound-E-lambda-statement}, we arrive at
    \begin{align*}
        &\rph{\leq} \frac{\lambda t}{2} \| \Phi_{t}(x(t))\|^{2} + \left( \zeta_{1}(t) + \int_{t_{0}}^{t} \zeta_{2}(s) ds\right) (\inf h - h(x^{*})) \\
        &\leq \frac{\lambda t}{2} \| \Phi_{t}(x(t))\|^{2} + \left( \zeta_{1}(t) + \int_{t_{0}}^{t} \zeta_{2}(s) ds\right) (h(x_{t}^{\text{best}}) - h(x^{*})) \\
        &\leq \frac{\lambda t}{2} \| \Phi_{t}(x(t))\|^{2} + \zeta_{1}(t) (h(x(t)) - h(x^{*})) + \left(\int_{t_{0}}^{t} \zeta_{2}(s) ds\right) (h(\bar{x}_{t}) - h(x^{*})) \\
        &\leq \frac{\lambda t}{2} \| \Phi_{t}(x(t))\|^{2} + \zeta_{1}(t) (h(x(t)) - h(x^{*})) + \int_{t_{0}}^{t} \zeta_{2}(s) (h(x(s)) - h(x^{*})) ds \\
        &\leq \cE_{\lambda}(t) + \int_{t_{0}}^{t} \zeta_{2}(s) (h(x(s)) - h(x^{*})) ds \leq - (\lambda + 1) \int_{t_{0}}^{t} \langle x(s) - x^{*}, V(x(s))\rangle ds + \cE_{\lambda}(t_{0}),
    \end{align*}
    which produces two inequalities. One the one hand, after rearranging terms and dividing by $t$, we get 
    \begin{equation}\label{eq:ergodic-rate-cont-1}
        \frac{\lambda}{2} \| \Phi_{t}(x(t))\|^{2} + \frac{\lambda + 1}{t}\int_{t_{0}}^{t} \langle x(s) - x^{*}, V(x(s))\rangle ds \leq \frac{\cE_{\lambda}(t_{0})}{t} + \frac{1}{t}\left( \zeta_{1}(t) + \int_{t_{0}}^{t} \zeta_{2}(s)ds\right) (h(x^{*}) - \inf h).
    \end{equation}
    On the other hand, after dropping the norm squared term on the left-hand side and the nonpositive integral term on the right-hand side we also obtain 
    \begin{equation*}
        \left( \zeta_{1}(t) + \int_{t_{0}}^{t} \zeta_{2}(s)ds\right) (h(x_{t}^{\textnormal{best}}) - h(x^{*})) \leq \cE_{\lambda}(t_{0}).
    \end{equation*}
    The stated rates follow after recalling that $\zeta_{1}(t) \asymp \zeta_{2}(t) \asymp \varepsilon(t) \asymp \frac{1}{t^{\delta}}$ as $t \to +\infty$, and thus 
    \begin{equation}
        \zeta_{1}(t) + \int_{t_{0}}^{t} \zeta_{2}(s)ds \asymp
        \begin{dcases}
            \frac{1}{t^{\delta}} + \frac{1}{\delta - 1} \left( \frac{1}{t_{0}^{\delta - 1}} - \frac{1}{t^{\delta - 1}}\right) & \text{if } \delta > 1, \\
            \frac{1}{t} + \ln(t) - \ln(t_{0}) & \text{if } \delta = 1, \\
            \frac{1}{t^{\delta}} + \frac{1}{1 - \delta}  (t^{1 - \delta} - t_{0}^{1 - \delta}) & \text{if } 0 < \delta < 1
        \end{dcases} \ \mbox{as} \quad t \to +\infty.
    \end{equation}
\end{proof}

\begin{remark}
    Since $\| \Phi_{t}(x(t))\| = \| V(x(t)) + \varepsilon(t) \nabla h(x(t))\|$, we can obtain further rates for $\| V(x(t))\|$ if we assume that $t\mapsto x(t)$ is bounded. Indeed, from the previous theorem and the triangle inequality, we get
    \[
        \| V(x(t))\| = 
        \begin{dcases}
            \mathcal{O}\left( \frac{1}{\sqrt{t}}\right) &\text{if } \delta > 1, \\
            \mathcal{O}\left( \sqrt{\frac{\ln t}{t}}\right) &\text{if } \delta = 1,  \\
            \mathcal{O}\left( \frac{1}{t^{\frac{\delta}{2}}}\right) &\text{if } 0 < \delta < 1
        \end{dcases}\ \mbox{as} \quad t \to +\infty.
    \]
\end{remark}

\subsection{Geometric assumptions on \texorpdfstring{$V$}{}}
Briefly recall the simple bilevel optimization problem  
\begin{equation}\label{eq:bilevel-where-inner-is-minimization}
    \begin{aligned}
        &\text{minimize} & &h(x) \\
        &\text{subject to } & & x\in \argmin f,
    \end{aligned}
\end{equation}
where $f : \mathcal{H}\to \R$ is also a convex and continuously Fr\'echet differentiable function, which is a particular case of \ref{eq:bilevel-where-inner-is-monotone-equation} with $V = \nabla f$. In this setting, we first considered a weaker integrability condition due to Attouch and Czarnecki: for every optimal solution $x^{*}$ to \eqref{eq:bilevel-where-inner-is-minimization} and every $p \in N_{\argmin f}(x^{*})$, we assumed that
\[
    \int_{t_{0}}^{+\infty} \Bigl[(f - \min f)^{*}(\varepsilon(t) p) - \sigma_{\argmin f}(\varepsilon(t) p)\Bigr] dt < +\infty.
\]
Second, we imposed a stronger H\"olderian error bound condition on the inner function: for some constant for some constant $\tau > 0$ and some exponent $\rho\in (1, 2]$, we assumed that
\[
   \frac{\tau}{\rho} \dist(x, \argmin f)^{\rho} \leq f(x) - \min f \quad \forall x \in \cH.
\]
In our smooth setting, the role of $(f - \min f)^{*}$ will be played instead by the \emph{Fitzpatrick function} of $V$. Recall that for a general monotone (and possibly multi-valued) operator $T : \cH \to 2^{\cH}$, its Fitzpatrick function is given by 
\begin{equation}\label{eq:def-Fitzpatrick}
    \mathcal{F}_{T}(x, u) := \sup_{(y, v) \in \graph T} \bigl\{ \langle y, u\rangle + \langle x, v\rangle - \langle y, v\rangle\bigr\}.
\end{equation}
An important fact is that, for every $x\in \cH$, by taking the supremum over $(y, 0)$, where $y\in \zer T$, we obtain
\begin{equation}\label{eq:fitzpatrick-support-function-inequality}
    \mathcal{F}_{T}(x, u) \geq \sup_{y \in \zer T} \bigl\{ \langle y, u\rangle + \langle x, 0\rangle - \langle y, 0\rangle\bigr\} = \sup_{y\in \zer T} \langle y, u\rangle = \sigma_{\zer T}(u). 
\end{equation} 
In order to get stronger convergence guarantees for our system, we will make the following geometric assumption on $V$, which has been introduced in \cite{BotCsetnek2016JMAA} and can be seen as the analogue of the Attouch-Czarnecki integrability condition.

\begin{ass}\label{ass:Attouch-Czarnecki}
    We say that the operator $V$ satisfies a \emph{Attouch--Czarnecki-type} condition if for every optimal solution $x^{*}$ to \eqref{eq:bilevel-where-inner-is-monotone-equation} and every $p^*\in N_{\zer V}(x^{*})$, it holds 
    \[
        \int_{t_{0}}^{+\infty} \Bigl[ \mathcal{F}_{V}(x^{*}, \varepsilon(t) p^*) - \sigma_{\zer V}(\varepsilon(t) p^*)\Bigr] dt < +\infty. 
    \]
\end{ass} 
The condition is well defined, since the integrand is nonnegative by \eqref{eq:fitzpatrick-support-function-inequality}. Since $V$ is maximally monotone, the set $\zer V$ is convex and closed, and $N_{\zer V}(x^{*}) :=\{p \in \cH : \langle p, x-x^* \rangle \leq 0 \ \forall x \in \zer V\}$ denotes the normal cone to $\zer V$ at $x^*$.

We will also consider the following sharpness condition.
\begin{ass}\label{ass:sharpness}
    We say that the operator $V : \cH \to \cH$ satisfies a \emph{sharpness condition} with exponent $\rho \in (1, 2)$ if for every optimal solution $x^{*}$ to \eqref{eq:bilevel-where-inner-is-monotone-equation} there exists some constant $\tau > 0$ such that
    \[
        \frac{\tau}{\rho} \dist(x, \zer V)^{\rho} \leq \langle x - x^{*}, V(x)\rangle \quad \forall x \in \cH.
    \]
\end{ass}
\noindent In the following lemma we provide two results which relate Assumptions \ref{ass:Attouch-Czarnecki} and \ref{ass:sharpness}. 
\begin{lemma}\label{lem:assumptions}
    Let $x^{*}$ be an optimal solution to \eqref{eq:bilevel-where-inner-is-monotone-equation} and let $p^{*} := -\nabla h(x^{*}) \in N_{\zer V}(x^{*})$. Then the following statements hold: 
    \begin{enumerate}[\rm (i)]
        \item For every $x\in \cH$ and every $\varepsilon > 0$, we have 
        \[
            - \langle x - x^{*}, V(x)\rangle - \varepsilon (h(x) - h(x^{*})) \leq \mathcal{F}_{V}(x^{*}, \varepsilon p^{*}) - \sigma_{\zer V}(\varepsilon p^{*}).
        \]
        \item Suppose Assumption \ref{ass:sharpness} holds for some exponent $\rho \in (1, 2)$ and constant $\tau > 0$. Let $\rho^{*}$ be the Hölder conjugate of $\rho$, i.e., $\frac{1}{\rho} + \frac{1}{\rho^{*}} = 1$. Then, for every $x, u \in \cH$, we have 
        \begin{align}
            \mathcal{F}_{V}(x^{*}, u) - \sigma_{\zer V}(u) &\leq \frac{\tau^{1 - \rho^{*}}}{\rho^{*}} \| u\|^{\rho^{*}}, \label{eq:integrability-of-fitzpatrick-minus-support-function}
                   \end{align}
                   and
             \begin{align}      
            h(x^{*}) - h(x) &\leq \| \nabla h(x^{*})\| \left(\frac{\rho \langle x - x^{*}, V(x)\rangle}{\tau}\right)^{\frac{1}{\rho}}. \label{eq:lower-bound-for-h}
        \end{align}
    \end{enumerate}
\end{lemma}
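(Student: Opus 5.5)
For (i), the plan is to bound $\mathcal{F}_{V}(x^{*}, \varepsilon p^{*})$ from below by testing the supremum in \eqref{eq:def-Fitzpatrick} with the single pair $(y, v) = (x, V(x)) \in \graph V$. This gives
\[
    \mathcal{F}_{V}(x^{*}, \varepsilon p^{*}) \geq \langle x, \varepsilon p^{*}\rangle + \langle x^{*}, V(x)\rangle - \langle x, V(x)\rangle = -\langle x - x^{*}, V(x)\rangle + \varepsilon\langle x, p^{*}\rangle .
\]
Next we compute the support function. Since $p^{*} \in N_{\zer V}(x^{*})$, we have $\sigma_{\zer V}(\varepsilon p^{*}) = \varepsilon \langle x^{*}, p^{*}\rangle$, because the supremum of $\langle \cdot, p^{*}\rangle$ over $\zer V$ is attained at $x^{*}$. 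Subtracting yields
\[
    \mathcal{F}_{V}(x^{*}, \varepsilon p^{*}) - \sigma_{\zer V}(\varepsilon p^{*}) \geq -\langle x - x^{*}, V(x)\rangle + \varepsilon\langle x - x^{*}, -\nabla h(x^{*})\rangle .
\]
Finally, the gradient inequality $h(x) - h(x^{*}) \geq \langle \nabla h(x^{*}), x - x^{*}\rangle$ gives $\varepsilon\langle x - x^{*}, -\nabla h(x^{*})\rangle \geq -\varepsilon(h(x) - h(x^{*}))$, which is (i). As a side remark, $p^{*} \in N_{\zer V}(x^{*})$ is the first-order optimality of $x^{*}$ for minimizing $h$ over the closed convex set $\zer V$, and it is given in the statement.

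For \eqref{eq:integrability-of-fitzpatrick-minus-support-function}, we take an arbitrary $(y, V(y)) \in \graph V$ and bound the Fitzpatrick integrand. Because $V(x^{*}) = 0$, we have
\[
    \langle y, u\rangle + \langle x^{*}, V(y)\rangle - \langle y, V(y)\rangle = \langle y, u\rangle - \langle y - x^{*}, V(y)\rangle \leq \langle y, u\rangle - \tfrac{\tau}{\rho}\dist(y, \zer V)^{\rho}
\]
by Assumption \ref{ass:sharpness}. Let $\bar y := P_{\zer V}(y)$, the projection onto the closed convex set $\zer V$. Then
\[
    \langle y, u\rangle = \langle \bar y, u\rangle + \langle y - \bar y, u\rangle \leq \sigma_{\zer V}(u) + \|u\|\,\dist(y, \zer V).
\]
The integrand is therefore at most $\sigma_{\zer V}(u) + \sup_{d \geq 0}\{\|u\| d - \tfrac{\tau}{\rho} d^{\rho}\}$. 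The one-dimensional supremum is the conjugate of $\tfrac{\tau}{\rho}|\cdot|^{\rho}$, which equals $\tfrac{\tau^{1-\rho^{*}}}{\rho^{*}}\|u\|^{\rho^{*}}$ (Young's inequality). Taking the supremum over $y$ gives the claim. If $\sigma_{\zer V}(u) = +\infty$, the inequality is trivial under the convention $\infty - \infty$; otherwise it follows directly. I will state this caveat briefly.

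For \eqref{eq:lower-bound-for-h}, we again let $\bar x := P_{\zer V}(x)$. Convexity gives $h(x^{*}) - h(x) \leq \langle \nabla h(x^{*}), x^{*} - x\rangle$. We split this as
\[
    \langle \nabla h(x^{*}), x^{*} - \bar x\rangle + \langle \nabla h(x^{*}), \bar x - x\rangle .
\]
The first term is $\langle p^{*}, \bar x - x^{*}\rangle \leq 0$ by the normal cone property, since $\bar x \in \zer V$. The second term is at most $\|\nabla h(x^{*})\|\,\dist(x, \zer V)$. Inverting Assumption \ref{ass:sharpness} gives $\dist(x, \zer V) \leq \bigl(\rho\langle x - x^{*}, V(x)\rangle/\tau\bigr)^{1/\rho}$, and the result follows.

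The main obstacle, if any, is the Young/conjugate computation in (ii) and its constant; the rest is direct.
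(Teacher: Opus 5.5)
Your proposal is correct and follows the paper's proof essentially step for step. In (i) you test the Fitzpatrick supremum at $(x, V(x))$ and combine $\sigma_{\zer V}(\varepsilon p^{*}) = \langle x^{*}, \varepsilon p^{*}\rangle$ with the gradient inequality, and in the second bound of (ii) you use the same projection and normal-cone split as the paper. The only difference is in the first bound of (ii): you verify $\bigl(\iota_{\zer V} \infconv \frac{\tau}{\rho}\|\cdot\|^{\rho}\bigr)^{*} \leq \sigma_{\zer V} + \frac{\tau^{1-\rho^{*}}}{\rho^{*}}\|\cdot\|^{\rho^{*}}$ by hand, via the projection and a one-dimensional Young inequality, where the paper cites the conjugate-of-infimal-convolution formula; this is cosmetic.
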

\begin{proof}
    (i) Since $\varepsilon p^{*} \in N_{\zer V}(x^{*})$, it holds $\langle x^{*}, \varepsilon p^{*}\rangle = \sigma_{\zer V}(\varepsilon p^{*})$. Using the gradient inequality on $h$ and recalling the definition \eqref{eq:def-Fitzpatrick} of the Fitzpatrick function of $V$, for every $x \in \cH$, we obtain
    \begin{align*}
        -\langle x - x^{*}, V(x)\rangle - \varepsilon(h(x) - h(x^{*})) &\leq - \langle x - x^{*}, V(x)\rangle - \varepsilon \langle x - x^{*}, \nabla h(x^{*})\rangle \\
        &= \langle x, \varepsilon p^{*}\rangle + \langle x^{*}, V(x)\rangle - \langle x, V(x)\rangle - \langle x^{*}, \varepsilon p^{*}\rangle \\
        &= \langle x, \varepsilon p^{*}\rangle + \langle x^{*}, V(x)\rangle - \langle x, V(x)\rangle - \sigma_{\zer V}(\varepsilon p^{*}) \\
        &\leq \sup_{y\in \cH} \Bigl\{ \langle y, \varepsilon p^{*}\rangle + \langle x^{*}, V(y)\rangle - \langle y, V(y)\rangle\Bigr\} - \sigma_{\zer V}(\varepsilon p^{*}) \nonumber \\
        &= \mathcal{F}_{V}(x^{*}, \varepsilon p^{*}) - \sigma_{\zer V}(\varepsilon p^{*}). 
    \end{align*}

    (ii) According to Assumption \ref{ass:sharpness}, for every $x\in \cH$, we have
    \[
        \left( \iota_{\zer V} \infconv \frac{\tau}{\rho} \| \cdot\|^{\rho}\right)(x) = \frac{\tau}{\rho} \dist(x, \zer V)^{\rho} \leq \langle x - x^{*}, V(x)\rangle,
    \]
    which in turn yields, for every $x, u \in \cH$, 
    \[
        \langle x, u\rangle - \langle x - x^{*}, V(x)\rangle \leq \langle x, u\rangle - \left( \iota_{\zer V} \infconv \frac{\tau}{\rho} \| \cdot\|^{\rho}\right)(x).
    \]
    After taking the supremum over $x\in \cH$ on both sides of the previous inequality, using the definition of the Fitzpatrick function and recalling basic facts about the Fenchel conjugate, for every $u\in \cH$, we arrive at 
    \[
        \mathcal{F}_{V}(x^{*}, u) \leq \left( \iota_{\zer V} \infconv \frac{\tau}{\rho} \| \cdot\|^{\rho}\right)^{*}(u) = \iota_{\zer V}^{*}(u) + \left(\frac{\tau}{\rho} \| \cdot\|^{\rho}\right)^{*}(u) = \sigma_{\zer V}(u) + \frac{\tau^{1 - \rho^{*}}}{\rho^{*}} \| u\|^{\rho^{*}}. 
    \]
    For showing \eqref{eq:lower-bound-for-h}, using the fact that $-\nabla h(x^{*})\in N_{\zer V}(x^{*})$, for every $x \in \cH$, we write
    \begin{align*}
        h(x^{*}) - h(x) &\leq \langle \nabla h(x^{*}), x^{*} - x\rangle \leq \langle \nabla h(x^{*}), P_{\zer V}(x) - x\rangle \\
        &\leq \| \nabla h(x^{*})\| \| P_{\zer V}(x) - x\| = \| \nabla h(x^{*})\| \dist(x, \zer V) \\
        &\leq \| \nabla h(x^{*})\| \left(\frac{\rho \langle x - x^{*}, V(x)\rangle}{\tau}\right)^{\frac{1}{\rho}}. 
    \end{align*}
\end{proof}
\begin{remark}
    As a direct corollary of part (ii) of Lemma \ref{lem:assumptions}, Assumption \ref{ass:sharpness} implies Assumption \ref{ass:Attouch-Czarnecki} whenever $\frac{1}{\rho^{*}} < \delta$.
\end{remark}

\subsection{Convergence analysis under geometric assumptions on \texorpdfstring{$V$}{}}
The following proposition provides two key results. 
\begin{proposition}\label{prop:lim-trajectories-and-operator}
 Let $x : [t_{0}, +\infty) \to \cH$ be a solution trajectory to \eqref{eq:first-order-system-regularized-operator} and let $x^{*}$ be an optimal solution to \eqref{eq:bilevel-where-inner-is-monotone-equation}. Assume that $V$ satisfies Assumption \ref{ass:Attouch-Czarnecki}. Then, for every $0 < \lambda < \frac{1}{\delta}$, it holds
    \[
        \lim_{t\to +\infty} \cE_{\lambda}(t) \ \text{exists}.
    \]
Consequently, we obtain 
    \[
        \lim_{t\to +\infty} t \| \Phi_{t}(x(t))\|^{2} = 0 \quad \text{and} \quad \lim_{t\to +\infty} \| x(t) - x^{*}\| \ \text{exists}.
    \]
\end{proposition}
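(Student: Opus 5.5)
The argument rests on the descent inequality \eqref{eq:descent-property-E-lambda-statement} of Lemma \ref{lem:full-energy-cont}, whose right-hand side we will control with Lemma \ref{lem:assumptions}(i). Fix $0<\lambda<\frac1\delta$ and set $p^*:=-\nabla h(x^*)\in N_{\zer V}(x^*)$. Rewrite \eqref{eq:descent-property-E-lambda-statement} as
\[
\dot{\cE}_\lambda(t)+\|\dot x(t)\|^2+\frac{\lambda}{2}\|\Phi_t(x(t))\|^2 \le -(1+\lambda)\langle x(t)-x^*,V(x(t))\rangle-\zeta_2(t)\bigl(h(x(t))-h(x^*)\bigr).
\]
Since $\zeta_2(t)\asymp\varepsilon(t)$, for $t$ large we have $\zeta_2(t)=\theta(t)(1+\lambda)\varepsilon(t)$ with $\theta(t)\to\theta_\infty>0$. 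Explicitly, $\zeta_2/\varepsilon\to 1+\lambda$, so $\theta_\infty=1$.

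\textbf{Step 1: bounding the right-hand side.} I would split the right-hand side as $(1+\lambda)\bigl[-\langle x-x^*,V(x)\rangle-\varepsilon(h(x)-h(x^*))\bigr]$ plus a correction $\bigl((1+\lambda)\varepsilon-\zeta_2\bigr)(h(x)-h(x^*))$. Lemma \ref{lem:assumptions}(i) bounds the bracket by $g(t):=\mathcal F_V(x^*,\varepsilon(t)p^*)-\sigma_{\zer V}(\varepsilon(t)p^*)$, which is nonnegative and integrable by Assumption \ref{ass:Attouch-Czarnecki}.

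The correction is the main obstacle. Its coefficient is $O(\varepsilon(t)/t)$ by the explicit formula for $\zeta_2$ in Lemma \ref{lem:full-energy-cont}, but $h(x)-h(x^*)$ has no sign. Where it is positive the correction is harmless, since $h(x)-h(x^*)\geq \inf h-h(x^*)$ gives a lower bound and we may absorb it. Where it is negative we need $|(\cdot)|\,(h(x^*)-h(x))$ integrable. The cleaner route, which I would try first, is to apply Lemma \ref{lem:assumptions}(i) not with $\varepsilon(t)$ but with the scaled parameter $\tilde\varepsilon(t):=\zeta_2(t)/(1+\lambda)$. This gives exactly
\[
\text{RHS}\le(1+\lambda)\bigl[\mathcal F_V(x^*,\tilde\varepsilon p^*)-\sigma_{\zer V}(\tilde\varepsilon p^*)\bigr].
\]
It then remains to transfer integrability from $\varepsilon$ to $\tilde\varepsilon$. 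For this I would use that $u\mapsto\mathcal F_V(x^*,u)-\sigma_{\zer V}(u)$ restricted to the ray $\{s p^*: s\ge0\}$ is convex in $s$ and vanishes at $s=0$. Hence $s\mapsto\varphi(s)/s$ is nondecreasing, so $\varphi(s)\le\varphi(s')$ whenever $s\le s'$. If $\tilde\varepsilon\le\varepsilon$ for large $t$, we are done. Checking the explicit $\zeta_2$, one finds $\tilde\varepsilon=\varepsilon\bigl(1-O(1/t)\bigr)\le\varepsilon$ for $t$ large, since the correction terms are $-\frac{\delta(\delta+1)}{t^2}-\lambda\frac{\delta(\delta+2)}{t}<0$. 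This settles the transfer, with any finite initial interval handled by continuity.

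\textbf{Step 2: existence of the limit.} We now have $\dot{\cE}_\lambda(t)+\|\dot x\|^2+\frac\lambda2\|\Phi_t\|^2\le (1+\lambda)g(t)$ with $g\in L^1$ and $g\ge0$. Therefore $\cE_\lambda(t)-(1+\lambda)\int_{t_0}^t g$ is nonincreasing. Moreover $\cE_\lambda$ is bounded below by \eqref{eq:lower-bound-E-lambda-statement}, since $\zeta_1(t)(h(x)-h(x^*))\ge\zeta_1(t)(\inf h-h(x^*))$ is bounded. A nonincreasing function bounded below converges, so $\lim\cE_\lambda(t)$ exists. As by-products, $\int\|\Phi_t(x(t))\|^2dt<\infty$ and $\int\|\dot x\|^2<\infty$.

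\textbf{Step 3: the consequences.} Since the limit exists for every $\lambda\in(0,\frac1\delta)$, take two values $\lambda_1\ne\lambda_2$ and subtract. This shows that $\lim\cE_2(t)$ exists, and hence also $\lim\cE_1(t)$. Next, $t\dot\varepsilon(t)=-\delta\varepsilon(t)\to0$ and $h(x(t))$ is bounded below, so the terms of $\cE_2$ involving $h$ tend to $0$ from the relevant side. To make this precise, I would first get boundedness of $x(t)$ from the lower bound and bounded $\cE_\lambda$, which gives boundedness of $h(x(t))$ and $\nabla h$. Then $\frac t2\|\Phi_t\|^2+\frac12\|x-x^*+\Phi_t\|^2$ converges. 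Since $\Phi_t\to0$ (it is bounded by $\sqrt{C/t}$ from the lower bound), the limit $L=\lim\frac t2\|\Phi_t\|^2+\frac12\lim\|x-x^*\|^2$ holds provided either limit exists.

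Finally, use $\int\|\Phi_t\|^2dt<\infty$. If $t\|\Phi_t\|^2$ had a positive $\liminf$, then $\|\Phi_t\|^2\gtrsim 1/t$ would not be integrable. Hence $\liminf t\|\Phi_t\|^2=0$. To upgrade to a genuine limit, I would show $\lim t\|\Phi_t\|^2$ exists by combining $\cE_1$, which contains $\frac12\|x-x^*\|^2$ plus vanishing terms, with $\cE_2$. Here $\langle x-x^*,V(x)\rangle=\langle x-x^*,\Phi_t\rangle-\varepsilon\langle x-x^*,\nabla h\rangle\to0$, so $\lim\frac12\|x-x^*\|^2=\lim\cE_1$ exists. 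Then $\lim t\|\Phi_t\|^2$ exists and must equal $0$.
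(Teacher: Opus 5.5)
Your proof is correct, and its skeleton is the paper's: the descent inequality of Lemma \ref{lem:full-energy-cont}, Lemma \ref{lem:assumptions}(i) together with Assumption \ref{ass:Attouch-Czarnecki} to get an integrable upper bound, and boundedness from \eqref{eq:lower-bound-E-lambda-statement}. It then subtracts two values of $\lambda$ to obtain $\lim\cE_2$ and $\lim\cE_1$, hence $\lim\|x(t)-x^*\|$ and $\lim t\|\Phi_t(x(t))\|^2$. The latter limit is forced to be $0$ by $\int_{t_0}^{+\infty}\|\Phi_t(x(t))\|^2dt<+\infty$.

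The one genuine difference is how you reconcile $\zeta_2(t)$ with a parameter at which Assumption \ref{ass:Attouch-Czarnecki} can be invoked.

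\textbf{The paper's route.} It keeps $\frac{1+\lambda}{2}\langle x(t)-x^*,V(x(t))\rangle$ on the left-hand side and splits according to the sign of $h(x(t))-h(x^*)$. If the sign is nonnegative, the remaining right-hand side is nonpositive. If it is negative, $\zeta_2\le(1+\lambda)\varepsilon$ lets the paper apply Lemma \ref{lem:assumptions}(i) at the parameter $2(1+\lambda)\varepsilon(t)$. This is admissible because $2(1+\lambda)p^*$ still lies in the cone $N_{\zer V}(x^*)$. The cost is a case distinction; the gain is $\int_{t_0}^{+\infty}\langle x(t)-x^*,V(x(t))\rangle dt<+\infty$ as a by-product, though that is not needed for this proposition.

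\textbf{Your route.} You apply Lemma \ref{lem:assumptions}(i) directly at $\tilde\varepsilon=\zeta_2/(1+\lambda)\le\varepsilon$. You then use that $\varphi(s)=\mathcal F_V(x^*,sp^*)-\sigma_{\zer V}(sp^*)$ is nondecreasing on $[0,+\infty)$. This avoids the case split and shows more generally that the Attouch--Czarnecki condition survives replacing $\varepsilon(t)$ by any smaller parameter. To make it airtight, spell out two facts:
\begin{enumerate}[(a)]
\item Why $\varphi$ is convex on the ray, even though $\mathcal F_V(x^*,\cdot)-\sigma_{\zer V}$ is a difference of convex functions. Since $p^*\in N_{\zer V}(x^*)$, one has $\sigma_{\zer V}(sp^*)=s\langle x^*,p^*\rangle$ for $s\ge 0$, so the subtracted term is linear there.
\item Why $\varphi(0)=0$. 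One has $\varphi(0)=\mathcal F_V(x^*,0)=\sup_{y}\langle x^*-y,V(y)-V(x^*)\rangle=0$, by monotonicity and $V(x^*)=0$.
\end{enumerate}

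The expansion of $\zeta_2$ you quote from the proof of Lemma \ref{lem:full-energy-cont} contains a sign slip. Since $-\dot\varepsilon=\frac{\delta}{t}\varepsilon$, the correct form is $\zeta_2=\varepsilon\bigl[(1+\lambda)-\frac{\delta(\delta+1)}{t^2}-\frac{\lambda\delta^2}{t}\bigr]$. This still gives $\tilde\varepsilon\le\varepsilon$, so your argument is unaffected.
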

\begin{proof}
    We begin from \eqref{eq:descent-property-E-lambda-statement} in Lemma \ref{lem:full-energy-cont}. It is straightforward to check that $\zeta_{2}(t) \leq (1 + \lambda)\varepsilon(t)$ for every $t \geq t_0$. Let $p^{*} := -\nabla h(x^{*}) \in N_{\zer V}(x^{*})$. After rearranging terms and using Lemma \ref{lem:assumptions}, for every $t \geq t_0$, we come to 
    \begin{align}
        &\rph{\leq}\dot{\cE}_{\lambda}(t) + \bigl\| \dot{x}(t)\bigr\|^{2} + \frac{\lambda}{2} \| \Phi_{t}(x(t))\|^{2} + \frac{1 + \lambda}{2} \langle x(t) - x^{*}, V(x(t))\rangle \nonumber\\
        &\leq - \frac{1}{2} \Bigl[ (1 + \lambda) \langle x(t) - x^{*}, V(x(t))\rangle + 2\zeta_{2}(t) (h(x(t)) - h(x^{*}))\Bigr] \nonumber\\
        &\leq 
        \begin{dcases}
            0 &\text{if }h(x(t)) \geq h(x^{*})\\
            - \frac{1}{2} \Bigl[ \langle x(t) - x^{*}, V(x(t))\rangle + 2 (1 + \lambda)\varepsilon(t) (h(x(t)) - h(x^{*}))\Bigr] &\text{if }h(x(t)) < h(x^{*})
        \end{dcases} \nonumber\\
        &\leq \frac{1}{2} \Bigl[ \mathcal{F}_{V}(x^{*}, 2(1 + \lambda)\varepsilon(t) p^{*}) - \sigma_{\zer V}(2(1 + \lambda)\varepsilon(t) p^{*})\Bigr]. \label{eq:lim-E-lambda-1}
    \end{align}
    According to Assumption \ref{ass:Attouch-Czarnecki}, the right-hand side of the previous inequality is integrable over $[t_{0}, +\infty)$, which in particular means that $t\mapsto \cE_{\lambda}(t)$ is upper bounded. From \eqref{eq:lower-bound-E-lambda-statement} in Lemma \ref{lem:full-energy-cont}, for every $t \geq t_0$, we have 
    \[
        \cE_{\lambda}(t) \geq \frac{1}{2} \| x(t) - x^{*}\|^{2} + \frac{\lambda t}{2} \| \Phi_{t}(x(t))\|^{2} + \zeta_{1}(t) (\inf h - h(x^{*})).
    \]
    Since $\zeta_{1}(t) \asymp \varepsilon(t) \to 0$ as $t \to +\infty$, we also obtain that $t\mapsto \| x(t) - x^{*}\|$ and $t\mapsto t \| \Phi_{t}(x(t))\|^{2}$ are upper bounded and that $t\mapsto \cE_{\lambda}(t)$ is lower bounded, and together with the integrability of the right-hand side of \eqref{eq:lim-E-lambda-1}, this implies that 
    \[
        \lim_{t\to +\infty} \cE_{\lambda}(t) \quad \text{exists.}
    \]
    Plugging this back into \eqref{eq:lim-E-lambda-1} gives 
    \begin{equation}
        \int_{t_{0}}^{+\infty} \bigl\| \dot{x}(t)\bigr\|^{2} dt < +\infty, \quad \int_{t_{0}}^{+\infty} \| \Phi_{t}(x(t))\|^{2} dt < +\infty \quad \text{and} \quad \int_{t_{0}}^{+\infty} \langle x(t) - x^{*}, V(x(t))\rangle dt < +\infty. \label{eq:lim-E-lambda-2}
    \end{equation}
    Now, choose $0 < \lambda_{1} < \lambda_{2} < \frac{1}{\delta}$. Since 
    \[
        \cE_{\lambda_{2}}(t) - \cE_{\lambda_{1}}(t) = (\lambda_{2} - \lambda_{1}) \cE_{2}(t) \quad \forall t \geq t_0,  
    \]
we deduce that $\lim_{t\to +\infty} \cE_{2}(t)$ exists, thus also $\lim_{t\to +\infty} \cE_{1}(t)$ exists. From $\lim_{t \to +\infty} \| \Phi_{t}(x(t))\| = 0$, which holds according to Theorem \ref{thm:ergodic-rate-cont}, due the boundedness of $t\mapsto x(t)$ and the fact that $\nabla h$ (and thus also $h$) is bounded on bounded subsets of $\cH$, we obtain that $\lim_{t\to +\infty} \| V(x(t))\| = 0$; this in turn ensures that the summands in $\cE_{1}(t)$ other that $\frac{1}{2} \| x(t) - x^{*}\|^{2}$ vanish, thus
    \[
        \lim_{t\to +\infty} \| x(t) - x^{*}\| \quad \text{exists.}
    \]
    The previous statment, together with the existence of $\lim_{t\to +\infty} \cE_{2}(t)$, entails
    \[
        \lim_{t\to +\infty} t \| \Phi_{t}(x(t))\|^{2} \quad \text{exists.}
    \]
    Moreover, according to \eqref{eq:lim-E-lambda-2}, we have 
    \[
        \int_{t_{0}}^{+\infty} \frac{t \| \Phi_{t}(x(t))\|}{t} dt = \int_{t_{0}}^{+\infty} \| \Phi_{t}(x(t))\|^{2} dt < +\infty, 
    \]
    and since $\int_{t_{0}}^{+\infty} \frac{1}{t} dt = +\infty$, the only value the limit can take is 
    \[
        \lim_{t\to +\infty} t\| \Phi_{t}(x(t))\|^{2} = 0. 
    \]
\end{proof} 
We are now in a position to state and prove the main theorem of this section.
\begin{theorem}\label{thm:continuous-time-full-rates}
    Let $x : [t_{0}, +\infty) \to \mathcal{H}$ be a solution trajectory to \eqref{eq:first-order-system-regularized-operator} and let $x^{*}$ be an optimal solution to \eqref{eq:bilevel-where-inner-is-monotone-equation}. The following statements hold:
    \begin{enumerate}[\rm (i)]
        \item If $V$ satisfies Assumption \ref{ass:Attouch-Czarnecki} and $\delta > \frac{1}{2}$, then $\| V(x(t))\| = o\left(\frac{1}{\sqrt{t}}\right)$ as $t \to +\infty$.
        \item If $V$ satisfies Assumption \ref{ass:Attouch-Czarnecki} and $\delta = \frac{1}{2}$, then
        \[ 
            \| V(x(t))\| = \mathcal{O}\left(\frac{1}{\sqrt{t}}\right), \quad
            \langle x(t) - x^{*}, V(x(t))\rangle = o\left(\frac{1}{\sqrt{t}}\right) \quad \text{and} \quad h(x(t)) \to h(x^{*})
        \]
        as $t \to +\infty$. Furthermore, $x(t)$ converges weakly to an optimal solution to the bilevel problem \eqref{eq:bilevel-where-inner-is-monotone-equation} as $t \to +\infty$.
        \item If $V$ satisfies Assumption \ref{ass:sharpness} with exponent $\rho \in (1, 2)$ and $\frac{1}{\rho^{*}} < \delta < \frac{1}{2}$, where $\frac{1}{\rho} + \frac{1}{\rho^{*}} = 1$, then
        \[
            \| V(x(t))\| = \mathcal{O}\left(\frac{1}{t^{\delta}}\right), \quad
            \langle x(t) - x^{*}, V(x(t))\rangle = o\left(\frac{1}{\sqrt{t}}\right) \quad \text{and} \quad |h(x(t)) - h(x^{*})| = o\left(\frac{1}{t^{\frac{1}{2} - \delta}}\right)
        \]
        as $t \to +\infty$. Furthermore, $x(t)$ converges weakly to an optimal solution to the bilevel problem \eqref{eq:bilevel-where-inner-is-monotone-equation} as $t \to +\infty$.
    \end{enumerate}
\end{theorem}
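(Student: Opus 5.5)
The plan is to derive everything from Proposition \ref{prop:lim-trajectories-and-operator}, which gives $t\|\Phi_t(x(t))\|^2\to 0$ and boundedness of the trajectory, and from the integrability statements \eqref{eq:lim-E-lambda-2}. We will also use the triangle inequality $\|V(x(t))\| \leq \|\Phi_t(x(t))\| + \varepsilon(t)\|\nabla h(x(t))\|$. Since $x(t)$ is bounded, $\nabla h(x(t))$ is bounded by Assumption \ref{ass:standing-cont}, so $\|V(x(t))\| \le o(t^{-1/2}) + \mathcal{O}(t^{-\delta})$. This already gives (i) for $\delta>\frac12$, and the $\mathcal{O}(1/\sqrt t)$ claim in (ii). For (iii) the same bound gives $\mathcal{O}(t^{-\delta})$, because $\delta<\frac12$ makes the $t^{-\delta}$ term dominant. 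For (iii), Assumption \ref{ass:Attouch-Czarnecki} holds by the Remark after Lemma \ref{lem:assumptions}, since $\frac1{\rho^*}<\delta$. Hence Proposition \ref{prop:lim-trajectories-and-operator} applies in all three cases.

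For the inner residual $\langle x(t)-x^*,V(x(t))\rangle$, we use the identity $\langle x(t)-x^*,V(x(t))\rangle = \langle x(t)-x^*,\Phi_t(x(t))\rangle - \varepsilon(t)\langle x(t)-x^*,\nabla h(x(t))\rangle$. The first term is $o(t^{-1/2})$ by boundedness and $\sqrt t\|\Phi_t\|\to 0$. For the second term we use the gradient inequality: $-\varepsilon(t)\langle x(t)-x^*,\nabla h(x(t))\rangle \le \varepsilon(t)(h(x^*)-h(x(t)))$. Thus an upper bound $o(t^{-1/2})$ follows once we control $h(x^*)-h(x(t))$.

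In case (ii), $\varepsilon(t)=c t^{-1/2}$, so it suffices that $\limsup (h(x^*)-h(x(t)))\le 0$, i.e. $\liminf h(x(t)) \ge h(x^*)$. This holds because every weak cluster point $\bar x$ of $x(t)$ lies in $\zer V$: $V$ is maximally monotone (continuous and monotone), $V(x(t))\to 0$ strongly, and the graph is weak–strong closed. Then weak lower semicontinuity gives $\liminf h(x(t))\ge h(\bar x)\ge h(x^*)$. The lower bound is $\langle x(t)-x^*,V(x(t))\rangle\ge0$ by monotonicity, since $V(x^*)=0$.

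In case (iii), we instead use \eqref{eq:lower-bound-for-h}: $h(x^*)-h(x(t)) \le C\langle x(t)-x^*,V(x(t))\rangle^{1/\rho}$. Writing $a(t):=\langle x(t)-x^*,V(x(t))\rangle\ge0$, we get $a(t)\le o(t^{-1/2}) + C t^{-\delta} a(t)^{1/\rho}$. Young's inequality with exponents $\rho,\rho^*$ gives $Ct^{-\delta}a^{1/\rho}\le \frac12 a + C' t^{-\delta\rho^*}$. Since $\delta\rho^*>1>\frac12$, the last term is $o(t^{-1/2})$, so absorbing yields $a(t)=o(t^{-1/2})$.

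For the outer residual in (iii), the upper bound comes from $\varepsilon(t)(h(x(t))-h(x^*)) \le \varepsilon(t)\langle x(t)-x^*,\nabla h(x(t))\rangle = \langle x(t)-x^*,\Phi_t(x(t))\rangle - a(t) \le o(t^{-1/2})$. Dividing by $\varepsilon(t)\asymp t^{-\delta}$ gives $o(t^{\delta-1/2})$. The lower bound comes from \eqref{eq:lower-bound-for-h}: $h(x^*)-h(x(t))\le C a(t)^{1/\rho}=o(t^{-1/(2\rho)})$. This is $o(t^{\delta-1/2})$ because $\frac1{2\rho}>\frac12-\delta$, which is equivalent to $\delta>\frac12-\frac1{2\rho}=\frac1{2\rho^*}$, and this holds since $\delta>\frac1{\rho^*}$. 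In case (ii), $h(x(t))\to h(x^*)$ follows in the same way: the upper bound is $\varepsilon(t)(h(x(t))-h(x^*))\le o(t^{-1/2})$, i.e. $\limsup(h(x(t))-h(x^*))\le 0$, and the $\liminf$ bound was shown above.

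Weak convergence is proved with Opial's lemma on the solution set $S$ of the bilevel problem. Proposition \ref{prop:lim-trajectories-and-operator} holds for every $x^*\in S$, so $\lim\|x(t)-x^*\|$ exists for each of them. Every weak cluster point $\bar x$ is in $\zer V$, and $h(\bar x)\le\liminf h(x(t)) = h(x^*)$, so $\bar x\in S$.

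The main obstacle is the absorption step in (iii) together with the exponent bookkeeping. One has to make sure that the $o(\cdot)$ rates coming from $\sqrt t\|\Phi_t\|\to0$ dominate the error terms in exactly the regime $\frac1{\rho^*}<\delta<\frac12$, and that every constant is uniform thanks to the boundedness of the trajectory.
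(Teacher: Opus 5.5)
Your proof is correct. Its backbone coincides with the paper's: Proposition~\ref{prop:lim-trajectories-and-operator} (giving $\sqrt{t}\,\|\Phi_t(x(t))\|\to 0$ and boundedness), the triangle inequality for $\|V(x(t))\|$, the inequality $\langle x(t)-x^*,V(x(t))\rangle+\varepsilon(t)(h(x(t))-h(x^*))\le\langle x(t)-x^*,\Phi_t(x(t))\rangle$, and Opial's lemma. You genuinely differ in how you upgrade the inner residual $a(t):=\langle x(t)-x^*,V(x(t))\rangle$.

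In (iii) the paper bootstraps. It starts from $a(t)=\mathcal{O}(t^{-\delta})$ and repeatedly feeds the current rate into $\|\nabla h(x^*)\|(\rho a(t)/\tau)^{1/\rho}$, which produces exponents $\delta_n=\delta(1+\rho^{-1}+\dots+\rho^{-n})\uparrow\delta\rho^*$. It stops after finitely many steps at $\mathcal{O}(1/\sqrt{t})$ and then needs a further pass to reach $o(1/\sqrt{t})$. Your single Young absorption with exponents $(\rho,\rho^*)$ replaces all of this by the explicit estimate $a(t)\le 2\|x(t)-x^*\|\,\|\Phi_t(x(t))\|+C\,t^{-\delta\rho^*}$. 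This gives $o(1/\sqrt{t})$ immediately and needs no preliminary $\mathcal{O}(t^{-\delta})$ bound. It also makes transparent that this step only uses $\delta\rho^*>\frac12$, the same threshold implicit in $\delta_n\to\delta\rho^*$. The stronger condition $\delta>\frac{1}{\rho^*}$ is needed only to obtain Assumption~\ref{ass:Attouch-Czarnecki} via the Remark following Lemma~\ref{lem:assumptions}. The paper uses that step silently ("in all three cases"); you make it explicit.

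In (ii), the paper first establishes weak convergence, verifying Opial's second condition through $q(t)/(\sqrt{t}\,\varepsilon(t))\to 0$. It then applies the gradient inequality at the weak limit $x^\diamond$ to get the rate. You instead obtain $\liminf_t h(x(t))\ge h(x^*)$ from weak cluster points lying in $\zer V$. So the rate and $h(x(t))\to h(x^*)$ come before, and independently of, weak convergence, which you then feed into Opial.

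One wording fix there: the inequality $\liminf_t h(x(t))\ge h(\bar x)$ is not valid for an arbitrary weak cluster point $\bar x$. Choose $t_k\to+\infty$ with $h(x(t_k))\to\liminf_t h(x(t))$, and extract a weakly convergent subsequence with limit $\bar x\in\zer V$. Then conclude $\liminf_t h(x(t))\ge h(\bar x)\ge h(x^*)$.
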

\begin{proof}
Throughout the proof, we will be invoking the statements given by Proposition \ref{prop:lim-trajectories-and-operator}. In all three cases, we know that $\| \Phi_{t}(x(t))\| = o\left(\frac{1}{\sqrt{t}}\right)$ as $t\to +\infty$. If $\delta > \frac{1}{2}$, i.e., in case (i), this implies that $\|V(x(t))\| = o\left(\frac{1}{\sqrt{t}}\right)$ as $t\to +\infty$. When $\delta \leq \frac{1}{2}$, i.e., in cases (ii) and (iii), we get $\|V(x(t))\| = \mathcal{O}(\varepsilon(t)) = \mathcal{O}\left(\frac{1}{t^{\delta}}\right)$ as $t\to +\infty$ instead. Moreover, by using the gradient inequality on $h$, for every $t\geq t_{0}$, we obtain
    \begin{align}
        t^{\frac{1}{2}} \varepsilon(t) (h(x(t)) - h(x^{*})) &\leq t^{\frac{1}{2}}\langle x(t) - x^{*}, V(x(t))\rangle + t^{\frac{1}{2}} \varepsilon(t) (h(x(t)) - h(x^{*})) \nonumber\\
        &\leq t^{\frac{1}{2}}\bigl\langle x(t) - x^{*}, \Phi_{t}(x(t))\bigr\rangle \nonumber\\
        &\leq t^{\frac{1}{2}} \|x(t) - x^{*}\| \| \Phi_{t}(x(t))\| =: q(t) \to 0 \label{eq:upper-rate-for-h}
    \end{align}
as $t\to +\infty$. Furthermore, we know that for any optimal solution $x^{*}$ to \eqref{eq:bilevel-where-inner-is-monotone-equation}, the limit $\lim_{t\to +\infty} \| x(t) - x^{*}\|$ exists, which verifies the first condition of Opial's Lemma. Now, let $\overline{x}$ be a weak sequential cluster point of $t \mapsto x(t)$, and let $(t_{k})_{k\geq 1} \subseteq [t_{0}, +\infty)$ be a strictly increasing sequence such that $t_{k} \to +\infty$ and the sequence $x(t_{k})$ converges weakly to $\overline{x}$ as $k\to +\infty$. We know that $V(x(t_{k})) \to 0$ as $k\to +\infty$. Since $\graph V$ is closed in $\cH^{\text{weak}}\times \cH^{\text{strong}}$, we obtain that $V(\overline{x}) = 0$, so $\overline{x}$ is feasible for problem \eqref{eq:bilevel-where-inner-is-monotone-equation}. In cases (ii) and (iii), from \eqref{eq:upper-rate-for-h}, the weak lower semicontinuity of $h$ and the fact that $t^{\frac{1}{2}} \varepsilon(t)$ remains bounded away from zero as $t\to +\infty$, we derive
    \[
        h(\overline{x}) \leq \liminf_{k\to +\infty} h(x(t_{k})) \leq \liminf_{k\to +\infty} \left( \frac{q(t_{k})}{t_{k}^{\frac{1}{2}} \varepsilon(t_{k})} + h(x^{*})\right) = h(x^{*}), 
    \]
which means that $\overline{x}$ is an optimal solution to \eqref{eq:bilevel-where-inner-is-monotone-equation}. This verifies the second condition of Opial's Lemma, so we have shown the weak convergence statements in (ii) and (iii). We now proceed to show the convergence rates, where we must distinguish between each case.

(ii) We already know that $\| V(x(t))\| = \mathcal{O}\left(\frac{1}{\sqrt{t}}\right)$ as $t\to +\infty$. Since the weak convergence of trajectories has already been established, let $x^{\diamond} \in \cH$ be such that $x(t) \rightharpoonup x^{\diamond}$ as $t\to +\infty$. In particular, we know that $h(x^{*}) = h(x^{\diamond})$. From \eqref{eq:upper-rate-for-h}, the gradient inequality on $h$ and recalling that $\varepsilon(t) = \frac{c}{\sqrt{t}}$, for every $t \geq t_0$, we get 
    \begin{equation*}
        \sqrt{t} \langle x(t) - x^{*}, V(x(t))\rangle + c \langle x(t) - x^{\diamond}, \nabla h(x^{\diamond}) \rangle \leq \sqrt{t} \langle x(t) - x^{*}, V(x(t))\rangle + c (h(x(t)) - h(x^{\diamond})) \leq q(t).
    \end{equation*}
The fact that $\lim_{t\to +\infty} q(t) = 0$ gives $\langle x(t) - x^{*}, V(x(t))\rangle = o\left(\frac{1}{\sqrt{t}}\right)$ as $t \to +\infty$. Plugging this back into the previous inequality yields that $h(x(t)) \to h(x^{\diamond}) = h(x^{*})$ as $t\to +\infty$. 
    
(iii) Again, we know that $\| V(x(t))\| = \mathcal{O}\left(\frac{1}{t^{\delta}}\right)$ as $t\to +\infty$, which also yields $\langle x(t) - x^{*}, V(x(t))\rangle = \mathcal{O}\left(\frac{1}{t^{\delta}}\right)$ as $t\to +\infty$. According to \eqref{eq:upper-rate-for-h}, for every $t\geq t_{0}$, we have  
    \[
        \sqrt{t}\bigl|\bigl\langle x(t) - x^{*}, \Phi_{t}(x(t))\bigr\rangle \bigr| \leq q(t) \leq \sup q < +\infty. 
    \]
Using the gradient inequality on $h$, for every $t\geq t_{0}$, we write
    \begin{align}
        &\rph{\leq}  \sqrt{t} \varepsilon(t) (h(x(t)) - h(x^{*})) - \sup q \nonumber\\
        &\leq  \sqrt{t}\langle x(t) - x^{*}, V(x(t))\rangle + \sqrt{t} \varepsilon(t) (h(x(t)) - h(x^{*})) - \sup q \nonumber\\
        &\leq  \sqrt{t} \langle x(t) - x^{*}, V(x(t))\rangle - \sqrt{t} \bigl\langle x(t) - x^{*}, \Phi_{t}(x(t))\bigr\rangle + \sqrt{t} \varepsilon(t) \langle x(t) - x^{*}, \nabla h(x(t))\rangle \nonumber \\
        &=  0, \label{eq:upper-bound-for-inner-and-outer}
    \end{align}
thus, after using \eqref{eq:lower-bound-for-h} in part (ii) of Lemma \ref{lem:assumptions}, we come to
    \begin{equation}\label{eq:to-improve-rate-for-xt-x-mxt-first-order}
        t^{\frac{1}{2}} \langle x(t) - x^{*}, V(x(t))\rangle \leq \sup q + t^{\frac{1}{2}} \varepsilon(t) (h(x^{*}) - h(x(t))) \leq \sup q + \frac{C_{0, V} t^{\frac{1}{2}}}{t^{\delta}} \bigl(\langle x(t) - x^{*}, V(x(t))\rangle \bigr)^{\frac{1}{\rho}}
    \end{equation}
    for some constant $C_{0, V} \geq 0$. By our previous observation, we know that $t\mapsto t^{\delta} \langle x(t) - x^{*}, V(x(t))\rangle$ is bounded, so we may write 
    \[
        t^{\frac{1}{2}} \langle x(t) - x^{*}, V(x(t))\rangle \leq \sup q + \frac{C_{0, V} t^{\frac{1}{2}}}{t^{\delta}} \left( \frac{t^{\delta} \langle x(t) - x^{*}, V(x(t))\rangle}{t^{\delta}}\right)^{\frac{1}{\rho}} \leq \sup q + \frac{C_{1, V} t^{\frac{1}{2}}}{t^{\delta + \frac{\delta}{\rho}}}
    \]
    for some constant $C_{1, V} \geq 0$ and for every $t \geq t_0$. If $ \delta_{1} := \delta + \frac{\delta}{\rho} \geq \frac{1}{2}$, we stop. Otherwise, we obtain $t\mapsto t^{\delta + \frac{\delta}{\rho}} \langle x(t) - x^{*}, V(x(t))\rangle$ bounded, i.e., we have improved the rate to $\langle x(t) - x^{*}, V(x(t))\rangle = \mathcal{O}\left(\frac{1}{t^{\delta_{1}}}\right)$. We may repeat the previous argument multiplying and dividing by $t^{\delta_{1}}$ instead of $t^{\delta}$, possibly improving the rate to $\langle x(t) - x^{*}, V(x(t))\rangle = \mathcal{O}\left(\frac{1}{t^{\delta_{2}}}\right)$, where
    \[
        \delta_{2} := \delta + \frac{\delta_{1}}{\rho} = \delta \left (1 + \frac{1}{\rho} + \frac{1}{\rho^2} \right).
    \]
    We may repeat this argument $n$ times, as long as $\delta_{n} := \delta + \frac{\delta_{n - 1}}{\rho} < \frac{1}{2}$, in order to obtain a positive constant $C_{n, V}$ such that, for every $t\geq t_{0}$, it holds 
    \[
        t^{\frac{1}{2}} \langle x(t) - x^{*}, V(x(t))\rangle \leq \sup q + \frac{C_{n, V} t^{\frac{1}{2}}}{t^{\delta_{n}}},
    \]
    where it is straightforward to check that we have $\delta_{n} = \delta\left(1 + \frac{1}{\rho} + \cdots + \frac{1}{\rho^n}\right)$ for every $n\geq 1$. Since $\delta_{n} \to \delta \frac{\rho}{\rho -1 } = \delta \rho^{*} > \frac{1}{2}$ as $n\to +\infty$, we will stop at some $n_{0}$ such that $\delta_{n_{0} - 1} < \frac{1}{2}$ and $\delta_{n_{0}} \geq \frac{1}{2}$, thus coming to 
    \[
        \langle x(t) - x^{*}, V(x(t))\rangle = \mathcal{O}\left(\frac{1}{\sqrt{t}}\right)
    \]
    as $t\to +\infty$. By combining this with \eqref{eq:lower-bound-for-h} in part (ii) of Lemma \ref{lem:assumptions}, for some constant $C_{h} \geq 0$ we get 
    \[
        t^{\frac{1}{2}} \varepsilon(t) (h(x^{*}) - h(x(t))) \leq C_{h} t^{\frac{1}{2} - \delta} \cdot t^{-\frac{1}{2\rho}} = C_{h} t^{\frac{1}{2}(1 - \frac{1}{\rho}) - \delta} = C_{h} t^{\frac{1}{2\rho^{*}} - \delta} \to 0 
    \]
    as $t\to +\infty$, since $\frac{1}{\rho^{*}} - \delta < 0$ (and thus $\frac{1}{2\rho^{*}} - \delta < 0$). From the previous inequality and \eqref{eq:upper-rate-for-h}, for every $t \geq t_0$, we obtain
    \[
        \sqrt{t} \langle x(t) - x^{*}, V(x(t))\rangle - C_{h} t^{\frac{1}{2\rho^{*}} - \delta} \leq \sqrt{t} \langle x(t) - x^{*}, V(x(t))\rangle + t^{\frac{1}{2}} \varepsilon(t) (h(x(t)) - h(x^{*})) \leq q(t).
    \]
    Therefore, together with $\lim_{t\to +\infty} q(t) = 0$, we deduce that $\langle x(t) - x^{*}, V(x(t))\rangle = o\left(\frac{1}{\sqrt{t}}\right)$ as $t\to +\infty$. Plugging this back into the previous inequality gives $ t^{\frac{1}{2}} \varepsilon(t) (h(x(t)) - h(x^{*}))$ upper and lower bounded by terms which converge to zero as $t\to +\infty$, which finally gives $ |h(x(t)) - h(x^{*})| = o(t^{-\frac{1}{2} + \delta})$ as $t\to +\infty$.  
\end{proof}

\section{A regularized proximal-extragradient algorithm for the composite smooth+nonsmooth bilevel optimization problem}
We now extend \eqref{eq:bilevel-where-inner-is-monotone-equation} by incorporating nonsmooth components at both the inner and outer levels, leading to the bilevel optimization problem
\begin{equation}\label{eq:bilevel-where-inner-is-monotone-inclusion}
    \begin{aligned}
        &\text{minimize} & & H(x) := h(x) + \hat{h}(x). \\
        &\text{subject to } & &0 \in V(x) + \partial \hat{f}(x)
    \end{aligned}
\end{equation}
Throughout this section, we make the following standing assumption on our problem. 
\begin{ass}\label{ass:standing-disc}
$h : \cH \to \R$ is convex, continuously Fréchet differentiable and $\nabla h$ is $L_{\nabla h}$-Lipschitz continuous for some $L_{\nabla h} > 0$. The operator $V : \cH \to \cH$ is monotone and $L_{V}$-Lipschitz continuous for some $L_{V} > 0$. The functions $\hat{f}, \hat{h} : \cH \to \overline{\R}$ are proper, convex and lower semicontinuous. Additionally, we make the following regularity assumptions: 
    \begin{enumerate}[\rm (i)]
        \item For every $k\geq 0$, $\partial (\hat{f} + \varepsilon_{k} \hat{h}) = \partial \hat{f} + \varepsilon_{k} \partial \hat{h}$; 
        \item For every optimal solution $x^{*}$ to \eqref{eq:bilevel-where-inner-is-monotone-inclusion}, it holds $0 \in \partial H(x^{*}) + N_{\zer(V + \partial \hat{f})}(x^{*})$. 
    \end{enumerate}
Finally, we require $\partial \hat{h}$ to map bounded subsets of its domain to bounded subsets of $\cH$. 
\end{ass}

In the convex analysis literature (see, for instance, \cite{BauschkeCombettes2017,Bot2010}), various regularity conditions are available that guarantee the exact subdifferential sum formulas in (i) and (ii) above.

\subsection{The algorithm}
With a motivation similar to that of the continuous-time approach, we propose a diagonal algorithm that, at each iteration $k \geq 0$ will performs one step of the proximal-extragradient method applied to the problem of finding a zero of the sum of the dynamically regularized operators
\[
    \Phi_{k}(x) := V(x) + \varepsilon_{k} \nabla h(x) \quad \mbox{and} \quad \partial g_{k}(x) = \partial \hat{f}(x) + \varepsilon_{k} \partial \hat{h}(x),  
\]
where 
\[
    g_{k}(x) := \hat{f}(x) + \varepsilon_{k} \hat{h}(x)
\]
and 
\[
    \varepsilon_{k} := \frac{c}{(k + a)^{\delta}}, \ \mbox{with} \ c, a, \delta > 0.
\]
We that the step size satisfies $s \in \left(0, \frac{1}{L_V}\right)$, which ensures that, for sufficiently large $k \geq 0$, $s \in \left(0, \frac{1}{L_k}\right]$, where  
\[
    L_{k} := L_{V} + \varepsilon_{k} L_{\nabla h} \quad \forall k \geq 0.
\]

\begin{algorithm}[t]
    \caption{Regularized proximal-extragradient method (Bi-EG)}
    \label{alg:fb-reg-eg}
    \begin{algorithmic}[1]
        \Require Initial point $x_{0} \in \cH$, stepsize $s \in \left(0, \frac{1}{L_V}\right)$
        \For{$k = 0,1,2,\dots$}
            \State $y_k := \prox_{s  \hat{f} + s\varepsilon_{k} \hat{h}} \big(x_k - sV(x_k) - s \varepsilon_k \nabla h(x_k) \big)$
            \State $x_{k+1} := \prox_{s  \hat{f} + s\varepsilon_{k} \hat{h}} \big(x_k - sV(y_k) - s \varepsilon_k \nabla h(y_k) \big)$
        \EndFor
    \end{algorithmic}
\end{algorithm} 
The optimality measure for the inner level will be given by 
\begin{equation}\label{eq:definition-inner-object}
    F(x) := \langle x - x^{*}, V(x)\rangle + \hat{f}(x) - \hat{f}(x^{*}), 
\end{equation}
where $x^{*}$ is an optimal solution to the bilevel problem \eqref{eq:bilevel-where-inner-is-monotone-inclusion}. 
\begin{remark}
    The inner object $F$ is indeed nonnegative. Indeed, every $x^{*} \in \zer (V + \partial \hat{f})$ is such that $ - V(x^{*}) \in \partial \hat{f}(x^{*})$. It follows that, for every $x \in \cH$,
    \begin{align*}
        \langle x - x^{*}, V(x)\rangle &= \langle x - x^{*}, V(x) - V(x^{*})\rangle + \langle x^{*} - x, -V(x^{*})\rangle \\
        &\geq \langle x - x^{*}, V(x) - V(x^{*})\rangle - (\hat{f}(x) - \hat{f}(x^{*}))\\
        & \geq - (\hat{f}(x) - \hat{f}(x^{*})).
    \end{align*}
\end{remark}

\subsection{Energy functions and preliminary estimates}
To study the convergence properties of Algorithm \ref{alg:fb-reg-eg}, we will be analyzing the descent properties of different energy functions. First, we will show a descent property for a distance-based energy function anchored to an optimal solution to the bilevel problem \eqref{eq:bilevel-where-inner-is-monotone-inclusion}.
\begin{lemma}\label{lem:fb-eg-iterate-descent}
Let $(x_{k})_{k\geq 0}$ and $(y_{k})_{k\geq 0}$ be the sequences given by Algorithm \ref{alg:fb-reg-eg}, and let $x^{*}$ be an optimal solution to \eqref{eq:bilevel-where-inner-is-monotone-inclusion}. Then, for every $k\geq 0$, we have 
\begin{align}
        \| x_{k + 1} - x^{*}\|^{2} &\leq \| x_{k} - x^{*}\|^{2} - (1 - s L_{k}) \Bigl[ \| x_{k} - y_{k}\|^{2} + \| x_{k + 1} - y_{k}\|^{2} \Bigr] \nonumber\\ 
        &\rph{\leq }- 2s \Bigl[ \langle y_{k} - x^{*}, \Phi_{k}(y_{k})\rangle + g_{k}(y_{k}) - g_{k}(x^{*})\Bigr]. \label{eq:fb-eg-iterate-descent}
\end{align}
\end{lemma}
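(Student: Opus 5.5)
We follow the standard one-step analysis of the extragradient method, using the convex function $g_k$ in place of an indicator and the regularized operator $\Phi_k$, which is monotone and $L_k$-Lipschitz. Write $u_k := x_k - s\Phi_k(x_k)$ and $w_k := x_k - s\Phi_k(y_k)$, so that $y_k = \prox_{sg_k}(u_k)$ and $x_{k+1} = \prox_{sg_k}(w_k)$. Everything rests on the subgradient characterization of the proximal map: $p = \prox_{sg_k}(u)$ if and only if, for every $z \in \cH$,
\[
    s\bigl(g_k(z) - g_k(p)\bigr) \geq \langle u - p, z - p\rangle .
\]

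\textbf{Step 1.} Apply this inequality to $x_{k+1}$ with test point $z = x^{*}$. This gives
\[
    s\bigl(g_k(x^{*}) - g_k(x_{k+1})\bigr) \geq \langle x_k - x_{k+1}, x^{*} - x_{k+1}\rangle - s\langle \Phi_k(y_k), x^{*} - x_{k+1}\rangle .
\]
Rewrite $2\langle x_k - x_{k+1}, x^{*} - x_{k+1}\rangle$ with the three-point identity as $\|x_k - x_{k+1}\|^2 + \|x_{k+1}-x^*\|^2 - \|x_k - x^*\|^2$. This produces an upper bound on $\|x_{k+1}-x^*\|^2$ in terms of $\|x_k-x^*\|^2 - \|x_k-x_{k+1}\|^2$, the term $2s\langle \Phi_k(y_k), x^* - x_{k+1}\rangle$, and the term $2s(g_k(x^*)-g_k(x_{k+1}))$.

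\textbf{Step 2.} Apply the same inequality to $y_k$ with test point $z = x_{k+1}$. This gives
\[
    s\bigl(g_k(x_{k+1}) - g_k(y_k)\bigr) \geq \langle x_k - y_k, x_{k+1}-y_k\rangle - s\langle \Phi_k(x_k), x_{k+1}-y_k\rangle .
\]
Expand $2\langle x_k - y_k, x_{k+1}-y_k\rangle = \|x_k-y_k\|^2 + \|x_{k+1}-y_k\|^2 - \|x_k - x_{k+1}\|^2$.

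\textbf{Step 3.} Add the two inequalities. The terms $g_k(x_{k+1})$ and $\|x_k-x_{k+1}\|^2$ cancel, leaving $-2s(g_k(y_k)-g_k(x^*))$. For the operator terms, split $x^*-x_{k+1} = (x^*-y_k) + (y_k - x_{k+1})$. This yields $-2s\langle y_k - x^*, \Phi_k(y_k)\rangle$ together with the cross term $2s\langle \Phi_k(y_k) - \Phi_k(x_k), y_k - x_{k+1}\rangle$. Monotonicity is not needed here, since the term $\langle y_k - x^{*}, \Phi_k(y_k)\rangle$ is kept in the statement. Bound the cross term using $L_k$-Lipschitz continuity of $\Phi_k$ (which follows from $L_V$ and $\varepsilon_k L_{\nabla h}$) and Young's inequality:
\[
    2s\langle \Phi_k(y_k)-\Phi_k(x_k), y_k-x_{k+1}\rangle \leq sL_k\bigl(\|x_k-y_k\|^2+\|x_{k+1}-y_k\|^2\bigr).
\]
Combined with the negative squares $-\|x_k-y_k\|^2-\|x_{k+1}-y_k\|^2$ from Step 2, this gives exactly the factor $-(1-sL_k)$ in \eqref{eq:fb-eg-iterate-descent}.

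\textbf{Main obstacle.} The only delicate part is the bookkeeping of signs in the two prox inequalities, so that $g_k(x_{k+1})$ and $\|x_k - x_{k+1}\|^2$ cancel exactly. The statement does not require $sL_k \le 1$, so the inequality holds for every $k \geq 0$. One should also check that $g_k(x^{*})$ is finite, which holds because $x^{*}\in\operatorname{dom}\hat f\cap\operatorname{dom}\hat h$, as $H(x^{*})$ is finite.
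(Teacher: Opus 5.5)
Your proposal is correct and follows essentially the same route as the paper. You sum the two prox subgradient inequalities (at $y_k$ tested with $x_{k+1}$, and at $x_{k+1}$ tested with $x^{*}$), so that $g_k(x_{k+1})$ and $\|x_k - x_{k+1}\|^2$ cancel, and you bound the remaining cross term $2s\langle \Phi_k(y_k)-\Phi_k(x_k), y_k - x_{k+1}\rangle$ via $L_k$-Lipschitz continuity and Young's inequality; the paper does the same and differs only in the order of bookkeeping (it sums first and adds $s\langle y_k - x^{*}, \Phi_k(y_k)\rangle$ to both sides before applying the polarization identities), and it likewise never uses $sL_k \le 1$.
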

\begin{proof}
 Let $k \geq 0$.  From the definition of the proximal operator, we have 
    \begin{equation}\label{eq:fb-eg-rewritten}
        \frac{x_{k} - s\Phi_{k}(x_{k}) - y_{k}}{s} \in \partial g_{k}(y_{k}) \quad \text{and} \quad \frac{x_{k} - s \Phi_{k}(y_{k}) - x_{k + 1}}{s} \in \partial g_{k}(x_{k + 1}). 
    \end{equation}
We use the subgradient inequality twice
    \begin{align*}
        g_{k}(x_{k + 1}) &\geq \left\langle \frac{x_{k} - y_{k}}{s} - \Phi_{k}(x_{k}), x_{k + 1} - y_{k}\right\rangle + g_{k}(y_{k}), \\
        g_{k}(x^{*}) &\geq \left\langle \frac{x_{k} - x_{k + 1}}{s} - \Phi_{k}(y_{k}), x^{*} - x_{k + 1}\right\rangle + g_{k}(x_{k + 1}).
    \end{align*}
Summing these inequalities and canceling the term $g_{k}(x_{k + 1})$, we obtain 
    \begin{equation*}
        g_{k}(x^{*}) \geq g_{k}(y_{k}) + \left\langle \frac{x_{k} - y_{k}}{s} - \Phi_{k}(x_{k}), x_{k + 1} - y_{k}\right\rangle + \left\langle \frac{x_{k} - x_{k + 1}}{s} - \Phi_{k}(y_{k}), x^{*} - x_{k + 1}\right\rangle,
    \end{equation*}
    which after multiplication by $s$ and rearranging gives 
    \begin{align*}
        s(g_{k}(y_{k}) - g_{k}(x^{*})) &\leq - \langle x_{k} - y_{k} - s\Phi_{k}(x_{k}), x_{k + 1} - y_{k}\rangle - \langle x_{k} - x_{k + 1} - s\Phi_{k}(y_{k}), x^{*} - x_{k + 1}\rangle \\
        &= - \langle x_{k} - y_{k}, x_{k + 1} - y_{k}\rangle - \langle x_{k} - x_{k + 1}, x^{*} - x_{k + 1}\rangle \\
        &\rph{\leq} + s \langle \Phi_{k}(x_{k}), x_{k + 1} - y_{k}\rangle + s \langle \Phi_{k}(y_{k}), x^{*} - x_{k + 1}\rangle.
    \end{align*}
We add $s \langle  y_{k} - x^{*}, \Phi_{k}(y_{k})\rangle$ to both sides of the previous inequality, which yields 
    \begin{align}
        s \Bigl[ \langle y_{k} - x^{*}, \Phi_{k}(y_{k})\rangle + g_{k}(y_{k}) - g_{k}(x^{*})\Bigr] &\leq - \langle x_{k} - y_{k}, x_{k + 1} - y_{k}\rangle - \langle x_{k} - x_{k + 1}, x^{*} - x_{k + 1}\rangle \nonumber\\
        &\rph{\leq} + s \langle y_{k} - x_{k + 1}, \Phi_{k}(y_{k}) - \Phi_{k}(x_{k})\rangle. \label{eq:fb-eg-descent-1}
    \end{align}
    By elementary properties of the inner product we have 
    \begin{align*}
        -2 \langle x_{k} - y_{k}, x_{k + 1} - y_{k}\rangle &= -\| x_{k} - y_{k}\|^{2} - \| x_{k + 1} - y_{k}\|^{2} + \| x_{k} - x_{k + 1}\|^{2}, \\
        -2 \langle x_{k} - x_{k + 1}, x^{*} - x_{k + 1}\rangle &= \| x_{k} - x^{*}\|^{2} - \| x_{k} - x_{k + 1}\|^{2} - \| x_{k + 1} - x^{*}\|^{2}. 
    \end{align*}
Now, we multiply \eqref{eq:fb-eg-descent-1} by $2$, plug the previous equalities, and use the Cauchy-Schwarz inequality, the $L_{k}$-Lipschitz continuity of $\Phi_{k}$ and Young's inequality to get  
    \begin{align*}
        &\rph{\leq} 2s \Bigl[ \langle y_{k} - x^{*}, \Phi_{k}(y_{k})\rangle + g_{k}(y_{k}) - g_{k}(x^{*})\Bigr] \\
        &\leq -\| x_{k} - y_{k}\|^{2} - \| x_{k + 1} - y_{k}\|^{2} + \| x_{k} - x_{k + 1}\|^{2} + \| x_{k} - x^{*}\|^{2} - \| x_{k} - x_{k + 1}\|^{2} - \| x_{k + 1} - x^{*}\|^{2} \\
        &\rph{\leq} + 2s \| x_{k + 1} - y_{k}\| \| \Phi_{k}(x_{k}) - \Phi_{k}(y_{k})\| \\
        &\leq -\| x_{k + 1} - x^{*}\|^{2} + \| x_{k} - x^{*}\|^{2} - \| x_{k} - y_{k}\|^{2} - \| x_{k + 1} - y_{k}\|^{2} + 2s L_{k} \|x_{k} - y_{k}\| \| x_{k + 1} - y_{k}\| \nonumber \\
        &\leq -\| x_{k + 1} - x^{*}\|^{2} + \| x_{k} - x^{*}\|^{2} - \| x_{k} - y_{k}\|^{2} - \| x_{k + 1} - y_{k}\|^{2} + s L_{k} \| x_{k} - y_{k}\|^{2} + s L_{k}\| x_{k + 1} - y_{k}\|^{2} \\
        &= - \| x_{k + 1} - x^{*}\|^{2} + \| x_{k} - x^{*}\|^{2} - (1 - s L_{k}) \Bigl[\| x_{k} - y_{k}\|^{2}  + \| x_{k + 1} - y_{k}\|^{2}\Bigr],
    \end{align*}
    so after reordering we obtain the desired statement. 
\end{proof}  
For the proximal-extragradient method, Giselsson, Latafat and Upadhyaya \cite{GiselssonLatafatUpadhyaya2026} recently introduced a new optimality measure which is formulated independently of any solution to the problem $0 \in V(x) + \partial \hat{f}(x)$. Inspired by their work, we define
\[
    \mathcal{V}_{k} := \frac{2}{s} \langle \Phi_{k}(x_{k}) - \Phi_{k}(y_{k}), x_{k} - x_{k + 1}\rangle + \frac{1}{s^{2}} \| x_{k + 1} - y_{k}\|^{2} + \frac{1}{s^{2}}\| x_{k} - x_{k + 1}\|^{2} \quad \forall k \geq 0.
\]
Notice that from \eqref{eq:fb-eg-rewritten}, we can rewrite Algorithm \ref{alg:fb-reg-eg} as 
\begin{equation}\label{eq:alg-2-rewritten}
    (\forall k \geq 0) \quad \left\{
        \begin{aligned}
            y_{k} &:= x_{k} - s\Phi_{k}(x_{k}) - s\zeta_{k}, & \ \zeta_{k} &\in \partial g_{k}(y_{k}), \\
            x_{k + 1} &:= x_{k} - s\Phi_{k}(y_{k}) - s \xi_{k + 1}, & \ \xi_{k + 1} &\in \partial g_{k}(x_{k + 1}).
        \end{aligned}
    \right.
\end{equation}
Moreover, for every $k\geq 1$, we write
\begin{equation}\label{eq:alg-2-rewritten-2}
    \xi_{k} := \xi_{\hat{f}, k} + \varepsilon_{k - 1} \xi_{\hat{h}, k}, \quad \xi_{\hat{f}, k} \in \partial \hat{f}(x_{k}), \ \xi_{\hat{h}, k} \in \partial \hat{h}(x_{k}).
\end{equation}
The following lemma shows useful upper and lower bounds for $(\mathcal{V}_{k})_{k \geq 0}$, as well as the fundamental fact that $(\mathcal{V}_{k})_{k \geq 0}$ upper bounds the norm squared of the full regularized operator. 
\begin{lemma}\label{lem:lower-upper-bound-nuk}
Let $(x_{k})_{k\geq 0}$ and $(y_{k})_{k\geq 0}$ be the sequences given by Algorithm \ref{alg:fb-reg-eg}. Then, for every sufficiently large $k \geq 0$, the following statements hold:
    \begin{enumerate}[\rm (i)]
        \item $\mathcal{V}_{k} \geq \frac{1 - s L_{k}}{s^{2}} \Bigl[ \| x_{k + 1} - y_{k}\|^{2} + \| x_{k + 1} - x_{k}\|^{2}\Bigr] \geq 0$;
        \item $ \mathcal{V}_{k} \leq \frac{5}{s^{2}} \Bigl[ \| x_{k + 1} - y_{k}\|^{2} + \| x_{k} - y_{k}\|^{2}\Bigr]$;
        \item $ \| \Phi_{k}(x_{k + 1}) + \xi_{k + 1}\|^{2} \leq \mathcal{V}_{k}$. 
    \end{enumerate}
\end{lemma}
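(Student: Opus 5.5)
The plan is to work directly with the definition of $\mathcal{V}_{k}$, abbreviating
\[
    a_{k} := x_{k} - x_{k+1}, \quad b_{k} := x_{k+1} - y_{k}, \quad D_{k} := \Phi_{k}(x_{k}) - \Phi_{k}(y_{k}),
\]
so that $x_{k} - y_{k} = a_{k} + b_{k}$ and
\[
    \mathcal{V}_{k} = \tfrac{2}{s}\langle D_{k}, a_{k}\rangle + \tfrac{1}{s^{2}}\|b_{k}\|^{2} + \tfrac{1}{s^{2}}\|a_{k}\|^{2}.
\]
The only structural facts needed are the following. First, $\Phi_{k}$ is monotone and $L_{k}$-Lipschitz. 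Second, "sufficiently large $k$" means $sL_{k} \leq 1$, which holds since $\varepsilon_{k} \to 0$ and $s < \frac{1}{L_V}$. Third, we use the rewriting \eqref{eq:alg-2-rewritten}, which gives $\xi_{k+1} = \frac{a_{k}}{s} - \Phi_{k}(y_{k})$.

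For (i), I would split the cross term through the intermediate point $x_{k+1}$:
\[
    \langle D_{k}, a_{k}\rangle = \langle \Phi_{k}(x_{k}) - \Phi_{k}(x_{k+1}), x_{k} - x_{k+1}\rangle + \langle \Phi_{k}(x_{k+1}) - \Phi_{k}(y_{k}), a_{k}\rangle .
\]
The first summand is nonnegative by monotonicity of $\Phi_{k}$. The second is at least $-L_{k}\|b_{k}\|\|a_{k}\| \geq -\frac{L_{k}}{2}(\|a_{k}\|^{2} + \|b_{k}\|^{2})$ by Cauchy--Schwarz, Lipschitz continuity and Young's inequality. Substituting gives $\mathcal{V}_{k} \geq \frac{1 - sL_{k}}{s^{2}}\bigl(\|a_{k}\|^{2} + \|b_{k}\|^{2}\bigr)$, and this is nonnegative once $sL_{k} \leq 1$. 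The key choice here is to route the difference through $x_{k+1}$ rather than bounding $\|D_{k}\| \leq L_{k}\|a_{k} + b_{k}\|$ directly. The direct bound produces a coefficient like $1 - 3sL_{k}$ in front of $\|a_{k}\|^{2}$, and I expect avoiding that loss to be the main point requiring care.

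For (ii), I would bound crudely from above. We have
\[
    \tfrac{2}{s}|\langle D_{k}, a_{k}\rangle| \leq \tfrac{2L_{k}}{s}\|x_{k} - y_{k}\|\|a_{k}\| \leq \tfrac{1}{s^{2}}\bigl(\|x_{k} - y_{k}\|^{2} + \|a_{k}\|^{2}\bigr),
\]
using $sL_{k} \leq 1$ and Young's inequality. Then $\|a_{k}\|^{2} = \|(x_{k} - y_{k}) - b_{k}\|^{2} \leq 2\|x_{k} - y_{k}\|^{2} + 2\|b_{k}\|^{2}$ is used on the two occurrences of $\|a_{k}\|^{2}$. Collecting terms gives the coefficients $5$ on $\|x_{k} - y_{k}\|^{2}$ and $5$ on $\|b_{k}\|^{2}$ (in fact $3$ suffices on the latter).

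For (iii), with $u_{k} := \Phi_{k}(x_{k+1}) + \xi_{k+1} = \frac{a_{k}}{s} + \Phi_{k}(x_{k+1}) - \Phi_{k}(y_{k})$, I would expand $\|u_{k}\|^{2}$ and subtract it from $\mathcal{V}_{k}$. The $\|a_{k}\|^{2}/s^{2}$ terms cancel, and the cross terms combine because $D_{k} - (\Phi_{k}(x_{k+1}) - \Phi_{k}(y_{k})) = \Phi_{k}(x_{k}) - \Phi_{k}(x_{k+1})$. This yields
\[
    \mathcal{V}_{k} - \|u_{k}\|^{2} = \tfrac{2}{s}\langle \Phi_{k}(x_{k}) - \Phi_{k}(x_{k+1}), x_{k} - x_{k+1}\rangle + \tfrac{1}{s^{2}}\|b_{k}\|^{2} - \|\Phi_{k}(x_{k+1}) - \Phi_{k}(y_{k})\|^{2}.
\]
The first term is nonnegative by monotonicity. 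The remaining two terms are at least $\bigl(\frac{1}{s^{2}} - L_{k}^{2}\bigr)\|b_{k}\|^{2} \geq 0$ by Lipschitz continuity and $sL_{k} \leq 1$, which proves (iii).
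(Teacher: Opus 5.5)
Your proposal is correct and follows the paper's proof essentially step for step: the same splitting of the cross term through $\Phi_{k}(x_{k+1})$, using monotonicity, Lipschitz continuity and Young's inequality, in (i); the same Young-plus-$\|x_{k}-x_{k+1}\|^{2} \leq 2\|x_{k}-y_{k}\|^{2} + 2\|x_{k+1}-y_{k}\|^{2}$ bookkeeping in (ii); and the same expansion of $\|\Phi_{k}(x_{k+1}) + \xi_{k+1}\|^{2}$ combined with monotonicity and $sL_{k} \leq 1$ in (iii). The only blemish is the parenthetical in (ii): your bookkeeping gives exactly $1+2+2 = 5$ on $\|x_{k+1}-y_{k}\|^{2}$, and a coefficient $3$ there (keeping $5$ on $\|x_{k}-y_{k}\|^{2}$) does not follow from monotonicity, Lipschitz continuity and $sL_{k}\leq 1$ alone (take $\Phi_{k}$ the identity, $s = L_{k} = 1$, $y_{k}=0$, $x_{k} = e$, $x_{k+1} = -e$, for which the expression defining $\mathcal{V}_{k}$ equals $9 > 8$), though this is harmless since the statement only asks for $5$.
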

\begin{proof}
(i) Using the $L_{k}$-Lipschitz continuity and the monotonicity of $\Phi_{k}$ together with Young's inequality, for every $k \geq 0$, we have
    \begin{align*}
        \mathcal{V}_{k} &= \frac{2}{s} \langle x_{k} - x_{k + 1}, \Phi_{k}(x_{k}) - \Phi_{k}(y_{k})\rangle + \frac{1}{s^{2}} \| x_{k + 1} - y_{k}\|^{2} + \frac{1}{s^{2}}\| x_{k} - x_{k + 1}\|^{2} \\
        &= \frac{2}{s}\langle x_{k} - x_{k + 1}, \Phi_{k}(x_{k}) - \Phi_{k}(x_{k + 1})\rangle + \frac{2}{s}\langle x_{k} - x_{k + 1}, \Phi_{k}(x_{k + 1}) - \Phi_{k}(y_{k})\rangle \\
        &\rph{=} + \frac{1}{s^{2}} \| x_{k + 1} - y_{k}\|^{2} + \frac{1}{s^{2}}\| x_{k} - x_{k + 1}\|^{2} \\
        &\geq - \frac{2}{s} \| x_{k} - x_{k + 1}\| L_{k} \| x_{k + 1} - y_{k}\| + \frac{1}{s^{2}} \| x_{k + 1} - y_{k}\|^{2} + \frac{1}{s^{2}}\| x_{k} - x_{k + 1}\|^{2} \\
        &\geq -\frac{L_{k}}{s} \| x_{k + 1} - x_{k}\|^{2} - \frac{L_{k}}{s} \| x_{k + 1} - y_{k}\|^{2} + \frac{1}{s^{2}} \| x_{k + 1} - y_{k}\|^{2} + \frac{1}{s^{2}}\| x_{k} - x_{k + 1}\|^{2} \\
        &= \frac{1 - s L_{k}}{s^{2}} \Bigl[ \| x_{k + 1} - y_{k}\|^{2} + \| x_{k + 1} - x_{k}\|^{2}\Bigr]. 
    \end{align*}

    (ii) Using the $L_{k}$-Lipschitz continuity of $\Phi_{k}$ together with Young's inequality and the fact that $L_{k} \leq \frac{1}{s}$, for every sufficiently large $k \geq 0$, we obtain 
    \begin{align*}
        \mathcal{V}_{k} &\leq \frac{2}{s} \| x_{k} - x_{k + 1}\| L_{k} \| x_{k} - y_{k}\| + \frac{1}{s^{2}} \| x_{k + 1} - y_{k}\|^{2} + \frac{2}{s^{2}} \| x_{k + 1} - y_{k}\|^{2} + \frac{2}{s^{2}} \| x_{k} - y_{k}\|^{2} \\
        &\leq \frac{1}{s^{2}} \| x_{k + 1} - x_{k}\|^{2} + \frac{1}{s^{2}} \| x_{k} - y_{k}\|^{2} + \frac{1}{s^{2}} \| x_{k + 1} - y_{k}\|^{2} + \frac{2}{s^{2}} \| x_{k + 1} - y_{k}\|^{2} + \frac{2}{s^{2}} \| x_{k} - y_{k}\|^{2} \\
        &\leq \frac{2}{s^{2}} \| x_{k + 1} - y_{k}\|^{2} + \frac{2}{s^{2}} \| x_{k} - y_{k}\|^{2} + \frac{1}{s^{2}} \| x_{k} - y_{k}\|^{2} + \frac{1}{s^{2}} \| x_{k + 1} - y_{k}\|^{2} + \frac{2}{s^{2}} \| x_{k + 1} - y_{k}\|^{2} + \frac{2}{s^{2}} \| x_{k} - y_{k}\|^{2} \\
        &= \frac{5}{s^{2}} \Bigl[ \| x_{k + 1} - y_{k}\|^{2} + \| x_{k} - y_{k}\|^{2}\Bigr].
    \end{align*}

    (iii) Using the monotonicity of $\Phi_{k}$ together the fact that $L_{k} \leq \frac{1}{s}$, for every sufficiently large $k \geq 0$, we obtain 
    \begin{align*}
        \| \Phi_{k}(x_{k + 1}) + \xi_{k + 1}\|^{2} &= \Bigl\| \frac{1}{s} (x_{k} - x_{k + 1}) - \Phi_{k}(y_{k}) + \Phi_{k}(x_{k + 1})\Bigr\|^{2} \\
        &= \| \Phi_{k}(x_{k + 1}) - \Phi_{k}(y_{k})\|^{2} + \frac{1}{s^{2}} \| x_{k} - x_{k + 1}\|^{2} + \frac{2}{s} \langle \Phi_{k}(x_{k + 1}) - \Phi_{k}(y_{k}), x_{k} - x_{k + 1}\rangle \\
        &\leq L_{k}^{2} \| x_{k + 1} - y_{k}\|^{2} + \frac{1}{s^{2}} \| x_{k} - x_{k + 1}\|^{2} + \frac{2}{s} \langle \Phi_{k}(x_{k + 1}) - \Phi_{k}(y_{k}), x_{k} - x_{k + 1}\rangle \\
        &\leq \frac{1}{s^{2}} \| x_{k + 1} - y_{k}\|^{2} + \frac{1}{s^{2}} \| x_{k} - x_{k + 1}\|^{2} + \frac{2}{s} \langle \Phi_{k}(x_{k}) - \Phi_{k}(y_{k}), x_{k} - x_{k + 1}\rangle \\
        &= \mathcal{V}_{k}.
    \end{align*}
\end{proof}
Now, we show the main descent property for $\mathcal{V}_{k}$ which will be fundamental for our analysis. 
\begin{lemma}\label{lem:descent-nuk}
    Let $(x_{k})_{k\geq 0}$ and $ (y_{k})_{k\geq 0}$ be the sequences given by Algorithm \ref{alg:fb-reg-eg}. Then, for every $k\geq 0$, it holds 
    \[
        \mathcal{V}_{k + 1} \leq \mathcal{V}_{k} - \frac{(1 - s^{2} L_{k}^{2})}{s^{2}} \| x_{k + 1} - y_{k}\|^{2} - \frac{2 (\Delta\varepsilon_{k})}{s} (H(y_{k + 1}) - H(x_{k + 1})),
    \]
where $\Delta\varepsilon_{k} := \varepsilon_{k+1} - \varepsilon_{k} \leq 0$.
\end{lemma}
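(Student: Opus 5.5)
The plan is to follow the Lyapunov-type argument of Giselsson, Latafat and Upadhyaya for the proximal-extragradient method. The new ingredient is the mismatch between the operators $(\Phi_k, g_k)$ used at iteration $k$ and $(\Phi_{k+1}, g_{k+1})$ used at iteration $k+1$. This mismatch is exactly what produces the term $-\frac{2\Delta\varepsilon_k}{s}(H(y_{k+1})-H(x_{k+1}))$.

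\textbf{Step 1: rewrite $\mathcal{V}_k$ through the residuals.} First I express $\mathcal{V}_k$ entirely through the subgradients in \eqref{eq:alg-2-rewritten}. From $\frac{x_k-x_{k+1}}{s} = \Phi_k(y_k)+\xi_{k+1}$ and $\frac{x_k-y_k}{s} = \Phi_k(x_k)+\zeta_k$ we get
\[
\Phi_k(x_k)-\Phi_k(y_k) = \tfrac{1}{s}(x_{k+1}-y_k) + \xi_{k+1}-\zeta_k .
\]
Completing the square in the definition of $\mathcal{V}_k$ then gives an identity of the form
\[
\mathcal{V}_k = \| \Phi_k(x_{k+1}) + \xi_{k+1}\|^2 + (\text{terms in } x_{k+1}-y_k \text{ and } \Phi_k(x_{k+1})-\Phi_k(y_k)),
\]
in the spirit of the computation in Lemma \ref{lem:lower-upper-bound-nuk}(iii). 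Written out exactly, this is
\[
\mathcal{V}_k = \| \Phi_k(x_{k+1})+\xi_{k+1}\|^2 + \tfrac{1}{s^2}\|x_{k+1}-y_k\|^2 - \|\Phi_k(x_{k+1})-\Phi_k(y_k)\|^2 + \tfrac{2}{s}\langle \Phi_k(x_k)-\Phi_k(x_{k+1}), x_k-x_{k+1}\rangle .
\]
I would verify this identity directly. It reduces the task to bounding $\mathcal{V}_{k+1}$ from above by $\|\Phi_k(x_{k+1})+\xi_{k+1}\|^2$ plus a $\Delta\varepsilon_k$ correction.

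\textbf{Step 2: expand $\mathcal{V}_{k+1}$ and use monotonicity across the two indices.} Next I write $\mathcal{V}_{k+1}$ using the iteration-$(k+1)$ relations
\[
\tfrac{x_{k+1}-y_{k+1}}{s} = \Phi_{k+1}(x_{k+1})+\zeta_{k+1}, \qquad \tfrac{x_{k+1}-x_{k+2}}{s} = \Phi_{k+1}(y_{k+1})+\xi_{k+2}.
\]
As in the standard extragradient analysis, the cross term in $\mathcal{V}_{k+1}$ is controlled by the Lipschitz bound $\|\Phi_{k+1}(x_{k+1})-\Phi_{k+1}(y_{k+1})\|\le L_{k+1}\|x_{k+1}-y_{k+1}\|$ with $L_{k+1}\le L_k$. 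It is also controlled by a monotonicity inequality pairing $(x_{k+2},\xi_{k+2})$ with $(y_{k+1},\zeta_{k+1})$ for $\partial g_{k+1}$.

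What remains is a comparison between the residual $\Phi_{k+1}(x_{k+1})+\zeta_{k+1}$ at iteration $k+1$ and the residual $\Phi_k(x_{k+1})+\xi_{k+1}$ at iteration $k$. In the unregularized case this comparison is handled by monotonicity of $\Phi+\partial g$ between $x_{k+1}$ and $y_{k+1}$. Here the two residuals belong to different operators, since $\xi_{k+1}\in\partial g_k(x_{k+1})$ while $\zeta_{k+1}\in\partial g_{k+1}(y_{k+1})$.

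\textbf{Step 3: the cross-index inequality (the main obstacle).} To handle the mismatch I use function-value inequalities in place of monotonicity. Adding the subgradient inequality for $g_{k+1}$ at $y_{k+1}$ (with subgradient $\zeta_{k+1}$, evaluated at $x_{k+1}$) to the one for $g_k$ at $x_{k+1}$ (with subgradient $\xi_{k+1}$, evaluated at $y_{k+1}$) gives
\[
\langle \zeta_{k+1}-\xi_{k+1}, y_{k+1}-x_{k+1}\rangle \ge \Delta\varepsilon_k\bigl(\hat h(y_{k+1})-\hat h(x_{k+1})\bigr).
\]
Similarly, $\Phi_{k+1}-\Phi_k = \Delta\varepsilon_k\nabla h$. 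Combining monotonicity of $V$ and $\nabla h$ with the gradient inequality $\langle\nabla h(y_{k+1}),y_{k+1}-x_{k+1}\rangle\ge h(y_{k+1})-h(x_{k+1})$ and the fact that $\Delta\varepsilon_k\le 0$, I expect
\[
\langle \Phi_{k+1}(y_{k+1})-\Phi_k(x_{k+1}), y_{k+1}-x_{k+1}\rangle \ge \Delta\varepsilon_k\bigl(h(y_{k+1})-h(x_{k+1})\bigr).
\]
Summing the two inequalities yields the generalized monotonicity estimate
\[
\langle (\Phi_{k+1}(y_{k+1})+\zeta_{k+1})-(\Phi_k(x_{k+1})+\xi_{k+1}), y_{k+1}-x_{k+1}\rangle \ge \Delta\varepsilon_k\bigl(H(y_{k+1})-H(x_{k+1})\bigr).
\]
I would insert this, multiplied by $\frac{2}{s}$, exactly where the unregularized proof uses monotonicity. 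That insertion should produce the $\Delta\varepsilon_k$ term with the stated sign.

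The delicate point is bookkeeping. I must choose carefully which of $\Phi_k$ and $\Phi_{k+1}$ appears at which point, so that no extra $\Delta\varepsilon_k$ terms survive beyond the $H$-difference. If a stray term such as $\Delta\varepsilon_k\langle\nabla h(x_{k+1}),\cdot\rangle$ remains, I would absorb it by using the gradient inequality at the other endpoint. Finally, the leftover terms $\frac{1}{s^2}\|x_{k+1}-y_k\|^2 - \|\Phi_k(x_{k+1})-\Phi_k(y_k)\|^2$ from Step 1 are bounded below by $\frac{1-s^2L_k^2}{s^2}\|x_{k+1}-y_k\|^2$ via $L_k$-Lipschitz continuity, which gives the claimed decrease.
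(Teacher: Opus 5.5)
Your Step 1 is correct and gives a clean reduction. The identity holds, and with monotonicity of $\Phi_k$ (to discard $\frac{2}{s}\langle \Phi_k(x_k)-\Phi_k(x_{k+1}),x_k-x_{k+1}\rangle\ge 0$, which you do not mention) and $L_k$-Lipschitz continuity it yields $\|u\|^2\le \mathcal{V}_k-\frac{1-s^2L_k^2}{s^2}\|x_{k+1}-y_k\|^2$, where $u:=\Phi_k(x_{k+1})+\xi_{k+1}$.

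The gap is in the remaining claim $\mathcal{V}_{k+1}\le \|u\|^2-\frac{2\Delta\varepsilon_k}{s}(H(y_{k+1})-H(x_{k+1}))$. You propose to prove it with three tools: monotonicity of $\partial g_{k+1}$ between $x_{k+2}$ and $y_{k+1}$, your two-point cross-index inequality between $x_{k+1}$ and $y_{k+1}$, and monotonicity and Lipschitz continuity of $\Phi_{k+1}$. For $\Delta\varepsilon_k=0$, all three hold for any monotone operator put in place of $\partial g$, and for such operators the lemma is false. To see this, identify $\R^2$ with $\mathbb{C}$ and replace $\partial g$ by $x\mapsto i\alpha x$. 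Take $\Phi(x)=i\beta x$, $s=1$, $\alpha=\frac12$, $\beta=-\frac{9}{10}$, so that $sL=\frac{9}{10}$.
\begin{itemize}
\item The resolvent-based extragradient step is $x_{k+1}=Mx_k$ with $M=\frac{1-\beta^2+i(\alpha-\beta)}{(1+i\alpha)^2}$.
\item A direct computation gives $\mathcal{V}_k=\frac{(\alpha+\beta)^2(1+\alpha^2-2\alpha\beta)}{(1+\alpha^2)^2}|x_k|^2$.
\item Moreover $|M|^2=\frac{(1-\beta^2)^2+(\alpha-\beta)^2}{(1+\alpha^2)^2}\approx 1.28$.
\end{itemize}
Hence $\mathcal{V}_{k+1}>\mathcal{V}_k$, and in fact $\mathcal{V}_{k+1}\approx 1.38\,\|u\|^2$. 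So no combination of the inequalities you list can close the argument. The plan fails at its premise that the unregularized proof rests on monotonicity of $\Phi+\partial g$ between $x_{k+1}$ and $y_{k+1}$.

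What is needed, and what the paper does, is a three-point (cyclic) inequality. Add the following three subgradient inequalities:
\begin{itemize}
\item the one for $g_k$ at $x_{k+1}$, evaluated at $y_{k+1}$ (this is your cross-index link);
\item the one for $g_{k+1}$ at $y_{k+1}$, evaluated at $x_{k+2}$;
\item the one for $g_{k+1}$ at $x_{k+2}$, evaluated at $x_{k+1}$.
\end{itemize}
Around the cycle $x_{k+1}\to y_{k+1}\to x_{k+2}\to x_{k+1}$ the $g_{k+1}$-values cancel, and only $\Delta\varepsilon_k(\hat h(x_{k+1})-\hat h(y_{k+1}))$ survives. This uses function values rather than mere monotonicity, which is exactly what the skew operator above lacks.

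With this inequality your framework closes quickly. Set $a:=\frac1s(x_{k+1}-y_{k+1})$ and $b:=\frac1s(x_{k+1}-x_{k+2})$.
\begin{itemize}
\item One has the exact identity $\mathcal{V}_{k+1}=\|a\|^2+2\langle \xi_{k+2}-\zeta_{k+1},b\rangle$.
\item The cyclic inequality gives $\langle \xi_{k+2}-\zeta_{k+1},b\rangle\le \langle \xi_{k+1}-\zeta_{k+1},a\rangle-\frac{\Delta\varepsilon_k}{s}(\hat h(y_{k+1})-\hat h(x_{k+1}))$.
\item Insert $\xi_{k+1}-\zeta_{k+1}=u-a+\Delta\varepsilon_k\nabla h(x_{k+1})$ and apply the gradient inequality of $h$ at $x_{k+1}$.
\end{itemize}
This yields $\mathcal{V}_{k+1}\le\|u\|^2-\|u-a\|^2-\frac{2\Delta\varepsilon_k}{s}(H(y_{k+1})-H(x_{k+1}))$, with no Lipschitz bound needed at index $k+1$.

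One further slip: the gradient inequality must be taken at $x_{k+1}$. The one at $y_{k+1}$ that you quote in Step 3 points the wrong way after multiplication by $\Delta\varepsilon_k\le 0$.
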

\begin{proof}
According to \eqref{eq:fb-eg-rewritten}, for every $k \geq 0$, we have  
    \begin{equation*}
        \frac{x_{k} - s\Phi_{k}(x_{k}) - y_{k}}{s} \in \partial g_{k}(y_{k}) \quad \text{and} \quad \frac{x_{k} - s \Phi_{k}(y_{k}) - x_{k + 1}}{s} \in \partial g_{k}(x_{k + 1}). 
    \end{equation*}
    Using the previous statements for steps $k$ and $k + 1$, we obtain, for every $k \geq 0$ and every $x \in \cH$,  the following subgradient inequalities
    \begin{align}
        g_{k}(x) &\geq g_{k}(x_{k + 1}) + \left\langle \frac{1}{s}(x_{k} - x_{k + 1}) - \Phi_{k}(y_{k}), x - x_{k + 1}\right\rangle, \label{eq:subgradient-ineq-1}\\
        g_{k + 1}(x) &\geq g_{k + 1}(x_{k + 2}) + \left\langle \frac{1}{s} (x_{k + 1} - x_{k + 2}) - \Phi_{k + 1}(y_{k + 1}), x - x_{k + 2}\right\rangle, \label{eq:subgradient-ineq-2}\\
        g_{k + 1}(x) &\geq g_{k + 1}(y_{k + 1}) + \left\langle \frac{1}{s}(x_{k + 1} - y_{k + 1}) - \Phi_{k + 1}(x_{k + 1}), x - y_{k + 1}\right\rangle. \label{eq:subgradient-ineq-3}
    \end{align}
    We plug $x := y_{k + 1}$ in \eqref{eq:subgradient-ineq-1}, $x := x_{k + 1}$ in \eqref{eq:subgradient-ineq-2} and $x := x_{k + 2}$ in \eqref{eq:subgradient-ineq-3}. We make $g_{k + 1}$ appear in the first inequality and then we sum all three together. This yields, for every $k \geq 0$,
    \begin{align}
        0 &\geq \Bigl[ g_{k}(x_{k + 1}) - g_{k + 1}(x_{k + 1}) - g_{k}(y_{k + 1}) + g_{k + 1}(y_{k + 1})\Bigr] \nonumber\\
        &\rph{\geq} + (g_{k + 1}(x_{k + 1}) - g_{k + 1}(y_{k + 1})) + (g_{k + 1}(x_{k + 2}) - g_{k + 1}(x_{k + 1})) + (g_{k + 1}(y_{k + 1}) - g_{k + 1}(x_{k + 2})) \nonumber \\
        &\rph{\geq} + \left\langle \frac{1}{s}(x_{k} - x_{k + 1}) - \Phi_{k}(y_{k}), y_{k + 1} - x_{k + 1}\right\rangle + \left\langle \frac{1}{s} (x_{k + 1} - x_{k + 2}) - \Phi_{k + 1}(y_{k + 1}), x_{k + 1} - x_{k + 2}\right\rangle \nonumber\\
        &\rph{\geq} + \left\langle \frac{1}{s}(x_{k + 1} - y_{k + 1}) - \Phi_{k + 1}(x_{k + 1}), x_{k + 2} - y_{k + 1}\right\rangle \nonumber \\
        &= - (\Delta \varepsilon_{k}) (\hat{h}(x_{k + 1}) - \hat{h}(y_{k + 1})) \nonumber\\
        &\rph{=} + \frac{1}{s} \langle x_{k} - x_{k + 1}, y_{k + 1} - x_{k + 1}\rangle - \langle \Phi_{k}(y_{k}), y_{k + 1} - x_{k + 1} \rangle \label{eq:descent-nu-1}\\
        &\rph{=} + \frac{1}{s} \| x_{k + 1} - x_{k + 2}\|^{2} - \langle \Phi_{k + 1}(y_{k + 1}), x_{k + 1} - x_{k + 2}\rangle \label{eq:descent-nu-2}\\
        &\rph{=} + \frac{1}{s} \langle x_{k + 1} - y_{k + 1}, x_{k + 2} - y_{k + 1}\rangle - \langle \Phi_{k + 1}(x_{k + 1}), x_{k + 2} - y_{k + 1}\rangle. \label{eq:descent-nu-3}
    \end{align}    
We multiply everything by $\frac{2}{s}$. Call $A_{k}$ to the resulting line \eqref{eq:descent-nu-1}, and call $B_{k}$ to the resulting lines \eqref{eq:descent-nu-2} and \eqref{eq:descent-nu-3}. For every $k \geq 0$, $A_{k}$ can be simplified as 
    \[
        A_{k} = \frac{1}{s^{2}} \| x_{k} - x_{k + 1}\|^{2} + \frac{1}{s^{2}} \| y_{k + 1} - x_{k + 1}\|^{2} - \frac{1}{s^{2}} \| x_{k} - y_{k + 1}\|^{2} - \frac{2}{s} \langle \Phi_{k}(y_{k}), y_{k + 1} - x_{k + 1}\rangle, 
    \]
while the terms in $B_{k}$ read
    \begin{align*}
        B_{k} &= \frac{2}{s^{2}} \| x_{k + 1} - x_{k + 2}\|^{2} + \frac{2}{s} \langle \Phi_{k + 1}(x_{k + 1}) - \Phi_{k + 1}(y_{k + 1}), x_{k + 1} - x_{k + 2}\rangle - \frac{2}{s} \langle \Phi_{k + 1}(x_{k + 1}), x_{k + 1} - x_{k + 2}\rangle \\
        &\rph{=} + \frac{1}{s^{2}} \| x_{k + 1} - y_{k + 1}\|^{2} + \frac{1}{s^{2}} \| x_{k + 2} - y_{k + 1}\|^{2} - \frac{1}{s^{2}} \| x_{k + 1} - x_{k + 2}\|^{2} - \frac{2}{s} \langle \Phi_{k + 1}(x_{k + 1}), x_{k + 2} - y_{k + 1}\rangle \\
        &= \frac{2}{s} \langle \Phi_{k + 1}(x_{k + 1}) - \Phi_{k + 1}(y_{k + 1}), x_{k + 1} - x_{k + 2}\rangle + \frac{1}{s^{2}} \| x_{k + 2} - y_{k + 1}\|^{2} + \frac{1}{s^{2}} \| x_{k + 1} - x_{k + 2}\|^{2} \\
        &\rph{=} + \frac{1}{s^{2}}\| x_{k + 1} - y_{k + 1}\|^{2} - \frac{2}{s} \langle \Phi_{k + 1}(x_{k + 1}), x_{k + 1} - y_{k + 1}\rangle \\
        &= \mathcal{V}_{k + 1} + \frac{1}{s^{2}} \| x_{k + 1} - y_{k + 1}\|^{2} - \frac{2}{s} \langle \Phi_{k + 1}(x_{k + 1}), x_{k + 1} - y_{k + 1}\rangle.
    \end{align*}
For every $k \geq 0$, we have $0 \geq - \frac{2(\Delta \varepsilon_{k})}{s} (\hat{h}(x_{k + 1}) - \hat{h}(y_{k + 1})) + A_{k} + B_{k}$ or, equivalently,
    \begin{align}
        \mathcal{V}_{k + 1} &\leq - \frac{2(\Delta\varepsilon_{k})}{s} (\hat{h}(y_{k + 1}) - \hat{h}(x_{k + 1})) \nonumber\\
        &\rph{\leq} - \frac{1}{s^{2}} \| x_{k + 1} - y_{k + 1}\|^{2} + \frac{2}{s} \langle \Phi_{k + 1}(x_{k + 1}), x_{k + 1} - y_{k + 1}\rangle \nonumber\\
        &\rph{\leq} - \frac{1}{s^{2}} \| x_{k} - x_{k + 1}\|^{2} - \frac{1}{s^{2}} \| y_{k + 1} - x_{k + 1}\|^{2} + \frac{1}{s^{2}} \| x_{k} - y_{k + 1}\|^{2} + \frac{2}{s} \langle \Phi_{k}(y_{k}), y_{k + 1} - x_{k + 1}\rangle \nonumber\\
        &= - \frac{2(\Delta\varepsilon_{k})}{s} (\hat{h}(y_{k + 1}) - \hat{h}(x_{k + 1})) \nonumber\\
        &\rph{=} + \frac{1}{s^{2}} \| x_{k} - y_{k + 1}\|^{2} - \frac{1}{s^{2}} \| x_{k} - x_{k + 1}\|^{2} - \frac{2}{s^{2}} \| x_{k + 1} - y_{k + 1}\|^{2} + \frac{2}{s} \langle \Phi_{k}(y_{k}) - \Phi_{k}(x_{k + 1}), y_{k + 1} - x_{k + 1}\rangle \nonumber\\
        &\rph{=} + \frac{2}{s} \langle \Phi_{k}(x_{k + 1}) - \Phi_{k + 1}(x_{k + 1}), y_{k + 1} - x_{k + 1}\rangle \nonumber\\
        &= - \frac{2(\Delta\varepsilon_{k})}{s} \Bigl[ \hat{h}(y_{k + 1}) - \hat{h}(x_{k + 1}) + \langle \nabla h(x_{k + 1}), y_{k + 1} - x_{k + 1}\rangle\Bigr] \nonumber\\
        &\rph{=} + \frac{1}{s^{2}} \| x_{k} - y_{k + 1}\|^{2} - \frac{1}{s^{2}} \| x_{k} - x_{k + 1}\|^{2} - \frac{2}{s^{2}} \| x_{k + 1} - y_{k + 1}\|^{2} \nonumber\\
        &\rph{=} + \frac{2}{s} \langle \Phi_{k}(y_{k}) - \Phi_{k}(x_{k + 1}), y_{k + 1} - x_{k + 1}\rangle. \label{eq:descent-nu-4}
    \end{align}
    Notice that, for every $k \geq 0$, 
    \begin{align*}
        \langle \Phi_{k}(y_{k}) - \Phi_{k}(x_{k + 1}), y_{k + 1} - x_{k + 1}\rangle &= \langle \Phi_{k}(x_{k}) - \Phi_{k}(x_{k + 1}), x_{k + 1} - x_{k}\rangle + \langle \Phi_{k}(y_{k}) - \Phi_{k}(x_{k}), x_{k + 1} - x_{k}\rangle \\
        &\rph{=}+ \bigl\langle \Phi_{k}(y_{k}) - \Phi_{k}(x_{k + 1}), y_{k + 1} + x_{k} - 2x_{k + 1}\bigr\rangle \\
        &\leq \langle \Phi_{k}(y_{k}) - \Phi_{k}(x_{k}), x_{k + 1} - x_{k}\rangle + \frac{s}{2} \| \Phi_{k}(y_{k}) - \Phi_{k}(x_{k + 1})\|^{2} \\
        &\rph{\leq} + \frac{1}{2s} \bigl\| y_{k + 1} + x_{k} - 2x_{k + 1}\bigr\|^{2} \\
        &\leq \langle \Phi_{k}(y_{k}) - \Phi_{k}(x_{k}), x_{k + 1} - x_{k}\rangle + \frac{s L_{k}^{2}}{2} \| y_{k} - x_{k + 1}\|^{2} \\
        &\rph{\leq} + \frac{1}{2s} \Bigl[ 2 \| x_{k} - x_{k + 1}\|^{2} + 2\| y_{k + 1} - x_{k + 1}\|^{2} - \| x_{k} - y_{k + 1}\|^{2}\Bigr], 
    \end{align*}
    where we used the $L_{k}$-Lipschitz continuity and monotonicity of $\Phi_{k}$, Young's inequality, and the polarization identity. Multiplying the previous inequality by $\frac{2}{s}$ and plugging it into \eqref{eq:descent-nu-4} gives us, after grouping the corresponding terms together, for every $k \geq 0$,
    \begin{align*}
        \mathcal{V}_{k + 1} &\leq - \frac{2(\Delta\varepsilon_{k})}{s} \Bigl[ \hat{h}(y_{k + 1}) - \hat{h}(x_{k + 1}) + \langle \nabla h(x_{k + 1}), y_{k + 1} - x_{k + 1}\rangle\Bigr] \\
        &\rph{\leq} + \frac{2}{s}\langle \Phi_{k}(y_{k}) - \Phi_{k}(x_{k}), x_{k + 1} - x_{k}\rangle + \frac{1}{s^{2}} \| x_{k} - x_{k + 1}\|^{2} + L_{k}^{2}\| x_{k + 1} - y_{k}\|^{2} \\
        &\leq - \frac{2(\Delta\varepsilon_{k})}{s} \Bigl[ \hat{h}(y_{k + 1}) - \hat{h}(x_{k + 1}) + h(y_{k + 1}) - h(x_{k + 1})\Bigr] \\
        &\rph{\leq} + \frac{2}{s}\langle \Phi_{k}(y_{k}) - \Phi_{k}(x_{k}), x_{k + 1} - x_{k}\rangle + \frac{1}{s^{2}} \| x_{k + 1} - y_{k}\|^{2} + \frac{1}{s^{2}} \| x_{k} - x_{k + 1}\|^{2} \\
        &\rph{\leq} - \frac{(1 - s^{2} L_{k}^{2})}{s^{2}} \| x_{k + 1} - y_{k}\|^{2} \\
        &= - \frac{2(\Delta \varepsilon_{k})}{s} (H(y_{k + 1}) - H(x_{k + 1})) + \mathcal{V}_{k} - \frac{(1 - s^{2} L_{k}^{2})}{s^{2}} \| x_{k + 1} - y_{k}\|^{2}, 
    \end{align*}
    where we used the gradient inequality on $h$ together with the fact that $-\Delta \varepsilon_{k} \geq 0$. 
\end{proof}
Now, we define the energy function 
\[
    E_{k} := (k - 1) \mathcal{V}_{k} + \frac{2k (\Delta \varepsilon_{k})}{s} H(y_{k}) \quad \forall k \geq 1,
\]
and state its descent property in the following lemma. 
\begin{lemma}\label{lem:descent-E-discrete}
    Let $(x_{k})_{k\geq 0}$ and $(y_{k})_{k\geq 0}$ be the sequences given by Algorithm \ref{alg:fb-reg-eg}. Then, for every sufficiently large  $k \geq 1$, it holds
    \[
        \Delta E_{k} \leq \mathcal{V}_{k} + \frac{2 k (\Delta\varepsilon_{k})}{s} (H(x_{k + 1}) - H(y_{k})) - \frac{k (1 - s^{2} L_{k}^{2})}{s^{2}} \| x_{k + 1} - y_{k}\|^{2} + \frac{2\Delta( k (\Delta \varepsilon_{k}))}{s} H(y_{k + 1}),
    \]
where $\Delta E_{k} := E_{k+1}-E_k$ and $\Delta( k (\Delta \varepsilon_{k})):=(k+1)(\Delta \varepsilon_{k+1}) - k (\Delta \varepsilon_{k})$.
\end{lemma}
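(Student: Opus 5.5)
We expand $\Delta E_k$ directly from the definition of $E_k$ and then substitute the one-step descent estimate for $\mathcal{V}_k$ from Lemma \ref{lem:descent-nuk}. By definition,
\begin{align*}
    \Delta E_{k} &= k\mathcal{V}_{k + 1} - (k - 1)\mathcal{V}_{k} + \frac{2(k + 1)(\Delta\varepsilon_{k + 1})}{s} H(y_{k + 1}) - \frac{2k(\Delta\varepsilon_{k})}{s} H(y_{k}) \\
    &= k(\mathcal{V}_{k + 1} - \mathcal{V}_{k}) + \mathcal{V}_{k} + \frac{2\Delta(k(\Delta\varepsilon_{k}))}{s} H(y_{k + 1}) + \frac{2k(\Delta\varepsilon_{k})}{s} \bigl(H(y_{k + 1}) - H(y_{k})\bigr).
\end{align*}
This split is exact. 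It is obtained by writing $(k+1)\Delta\varepsilon_{k+1} = k\Delta\varepsilon_k + \Delta(k(\Delta\varepsilon_k))$ in the $H(y_{k+1})$ term. Since $y_{k+1}$ and $x_{k+1}$ may lie outside $\operatorname{dom}\hat h$, we first note that all values $H(y_k)$ and $H(x_{k+1})$ are finite. This holds because $y_k, x_{k+1} \in \operatorname{dom} g_k \subseteq \operatorname{dom}\hat{h}$ for $k$ large, or at least whenever $\varepsilon_k > 0$, so the rearrangements involve no $\infty - \infty$.

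Next, we multiply the inequality of Lemma \ref{lem:descent-nuk} by $k \geq 1$. This gives
\[
    k(\mathcal{V}_{k + 1} - \mathcal{V}_{k}) \leq - \frac{k(1 - s^{2}L_{k}^{2})}{s^{2}} \| x_{k + 1} - y_{k}\|^{2} - \frac{2k(\Delta\varepsilon_{k})}{s} \bigl(H(y_{k + 1}) - H(x_{k + 1})\bigr).
\]
We insert this bound into the exact expression for $\Delta E_k$. The two terms containing $H(y_{k+1})$ with coefficient $\frac{2k\Delta\varepsilon_k}{s}$ cancel: one carries a plus sign from the expansion and the other a minus sign from the descent lemma. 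What remains is
\[
    \mathcal{V}_k + \frac{2k(\Delta\varepsilon_k)}{s}\bigl(H(x_{k+1}) - H(y_k)\bigr) - \frac{k(1 - s^2L_k^2)}{s^2}\|x_{k+1} - y_k\|^2 + \frac{2\Delta(k(\Delta\varepsilon_k))}{s} H(y_{k+1}),
\]
which is exactly the claimed bound.

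The argument is essentially a bookkeeping identity plus one inequality, so there is no deep obstacle. The one point needing care is the phrase ``sufficiently large $k$''. We must make sure no step requires a sign condition we have not justified. Multiplying Lemma \ref{lem:descent-nuk} by $k > 0$ is always legitimate. Since that lemma holds for all $k \geq 0$, the condition is needed only for the finiteness of the $H$-values and for consistency with the earlier lemmas where $sL_k \leq 1$. We then double-check that the algebra of $\Delta(k(\Delta\varepsilon_k))$ matches the definition given in the statement.
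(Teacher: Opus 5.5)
Your proof is correct and follows the paper's argument: you make the same exact split of $\Delta E_k$ into $\mathcal{V}_k + k(\mathcal{V}_{k+1}-\mathcal{V}_k)$ plus the $\Delta(k(\Delta\varepsilon_k))$ and $k(\Delta\varepsilon_k)$ terms in $H$, then insert $k$ times Lemma~\ref{lem:descent-nuk} so that the $\frac{2k(\Delta\varepsilon_k)}{s}H(y_{k+1})$ terms cancel. Your extra remark that $H(y_k)$ and $H(x_{k+1})$ are finite (prox outputs lie in $\operatorname{dom} g_k \subseteq \operatorname{dom}\hat h$) is a harmless addition that the paper leaves implicit.
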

\begin{proof}
    According to Lemma \ref{lem:descent-nuk}, for every $k \geq 1$, we have 
    \begin{align*}
        \Delta E_{k} &= \mathcal{V}_{k} + k (\mathcal{V}_{k+1} - \mathcal{V}_{k}) + \frac{2\Delta( k (\Delta \varepsilon_{k}))}{s} H(y_{k + 1}) + \frac{2 k (\Delta \varepsilon_{k})}{s} (H(y_{k + 1}) - H(y_{k})) \\
        &\leq \mathcal{V}_{k} + \left[ - \frac{2 k(\Delta \varepsilon_{k})}{s} (H(y_{k + 1}) - H(x_{k + 1})) - \frac{k (1 - s^{2} L_{k}^{2})}{s^{2}} \| x_{k + 1} - y_{k}\|^{2}\right] \\
        &\rph{\leq} + \frac{2\Delta( k (\Delta \varepsilon_{k}))}{s} H(y_{k + 1}) + \frac{2 k (\Delta \varepsilon_{k})}{s} (H(y_{k + 1}) - H(y_{k})) \\
        &= \mathcal{V}_{k} + \frac{2 k (\Delta\varepsilon_{k})}{s} (H(x_{k + 1}) - H(y_{k})) - \frac{k (1 - s^{2} L_{k}^{2})}{s^{2}} \| x_{k + 1} - y_{k}\|^{2} + \frac{2\Delta( k (\Delta \varepsilon_{k}))}{s} H(y_{k + 1}).
    \end{align*}
\end{proof}
Similar to the continuous-time case, we will now show some partial convergence guarantees which hold in the general setting of Assumption \ref{ass:standing-disc}, with the added assumption that $(x_{k})_{k\geq 0}$ is bounded. First, for $\lambda > 0$ and $x^{*}$ an optimal solution to \eqref{eq:bilevel-where-inner-is-monotone-inclusion}, we define the energy 
\[
    E_{\lambda, k} := E_{k} + \lambda \| x_{k} - x^{*}\|^{2} \quad \forall k \geq 1,
\]  
and state the relevant descent property in the following lemma. 
\begin{lemma}\label{lem:ergodic-rate-ineq-disc}
    Let $(x_{k})_{k\geq 0}$ and $(y_{k})_{k\geq 0}$ be the sequences given by Algorithm \ref{alg:fb-reg-eg}, and let $x^{*}$ be an optimal solution to \eqref{eq:bilevel-where-inner-is-monotone-inclusion}. Assume that $(x_{k})_{k\geq 0}$ is bounded.  Then, for every sufficiently large  $k \geq 1$, it holds 
    \begin{align*}
        &\rph{\leq}\Delta E_{\lambda, k} + \left(\frac{\lambda s^{2}(1 - sL_{k})}{5} - 1\right) \mathcal{V}_{k} + 2s\lambda \varepsilon_{k} (H(y_{k}) - H(x^{*})) \\
        &\leq -2 s \lambda F(y_{k}) + \frac{C_{H}}{\mu s} k |\Delta \varepsilon_{k}|^{2} + \frac{2\Delta (k(\Delta \varepsilon_{k}))}{s} H(y_{k + 1}), 
    \end{align*}
    for certain constants $C_{H} >0$ and $\mu > 0$, where $\Delta E_{\lambda, k}:= E_{\lambda, k+1} - E_{\lambda, k}.$
\end{lemma}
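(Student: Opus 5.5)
The plan is to add $\lambda$ times the distance estimate of Lemma \ref{lem:fb-eg-iterate-descent} to the estimate of Lemma \ref{lem:descent-E-discrete}. The cross term $\frac{2k(\Delta\varepsilon_k)}{s}(H(x_{k+1})-H(y_k))$ is then absorbed by the negative term $-\frac{k(1-s^2L_k^2)}{s^2}\|x_{k+1}-y_k\|^2$ via Young's inequality.

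\emph{Step 1.} Since $E_{\lambda,k}=E_k+\lambda\|x_k-x^*\|^2$, we have
\[
\Delta E_{\lambda,k}=\Delta E_k+\lambda\bigl(\|x_{k+1}-x^*\|^2-\|x_k-x^*\|^2\bigr).
\]
Bound $\Delta E_k$ by Lemma \ref{lem:descent-E-discrete} and the bracket by \eqref{eq:fb-eg-iterate-descent}. This produces the term $-\lambda(1-sL_k)\bigl[\|x_k-y_k\|^2+\|x_{k+1}-y_k\|^2\bigr]$. For large $k$, Lemma \ref{lem:lower-upper-bound-nuk}(ii) bounds it above by $-\frac{\lambda s^2(1-sL_k)}{5}\mathcal V_k$. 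Combined with the $+\mathcal V_k$ from Lemma \ref{lem:descent-E-discrete}, this gives the coefficient $\frac{\lambda s^2(1-sL_k)}{5}-1$ on the left-hand side. Next, split
\[
\langle y_k-x^*,\Phi_k(y_k)\rangle+g_k(y_k)-g_k(x^*)=F(y_k)+\varepsilon_k\bigl[\langle y_k-x^*,\nabla h(y_k)\rangle+\hat h(y_k)-\hat h(x^*)\bigr].
\]
By the gradient inequality for $h$, this is at least $F(y_k)+\varepsilon_k(H(y_k)-H(x^*))$. This yields the terms $-2s\lambda F(y_k)$ and $-2s\lambda\varepsilon_k(H(y_k)-H(x^*))$ with the correct signs.

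\emph{Step 2 (main obstacle).} We must control $2k|\Delta\varepsilon_k|\,(H(y_k)-H(x_{k+1}))/s$, using $\Delta\varepsilon_k\le 0$. First, $(y_k)$ is bounded: $x_k$ is bounded, $\Phi_k$ is Lipschitz with bounded $\varepsilon_k$, and $\prox$ is nonexpansive. Next, by \eqref{eq:alg-2-rewritten} and the sum rule in Assumption \ref{ass:standing-disc}(i), $\partial g_k(y_k)\neq\emptyset$ splits as $\partial\hat f(y_k)+\varepsilon_k\partial\hat h(y_k)$, so $\partial\hat h(y_k)\neq\emptyset$. Since $\partial\hat h$ maps bounded subsets of its domain to bounded sets, we can choose $\eta_k\in\partial\hat h(y_k)$ with $\sup_k\|\eta_k\|<\infty$. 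Likewise $\nabla h(y_k)$ is bounded, since $\nabla h$ is Lipschitz. The subgradient inequality then gives
\[
H(y_k)-H(x_{k+1})\le\langle\nabla h(y_k)+\eta_k,\,y_k-x_{k+1}\rangle\le C\|x_{k+1}-y_k\|.
\]
Since $L_k\to L_V$ and $sL_V<1$, there is $\mu>0$ with $1-s^2L_k^2\ge\mu$ for all large $k$. Young's inequality then gives
\[
\frac{2k|\Delta\varepsilon_k|C}{s}\|x_{k+1}-y_k\|\le\frac{k(1-s^2L_k^2)}{s^2}\|x_{k+1}-y_k\|^2+\frac{C^2k|\Delta\varepsilon_k|^2}{1-s^2L_k^2},
\]
and the last term is at most $\frac{C_H}{\mu s}k|\Delta\varepsilon_k|^2$ with $C_H:=sC^2$.

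\emph{Step 3.} The squared-norm term from Step 2 cancels the negative term from Lemma \ref{lem:descent-E-discrete}. The remaining $\lambda$-multiples of $\|x_{k+1}-y_k\|^2$ were already used in Step 1, and the term $\frac{2\Delta(k\Delta\varepsilon_k)}{s}H(y_{k+1})$ is carried over unchanged. Rearranging gives the claimed inequality for all sufficiently large $k$.
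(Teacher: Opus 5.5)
Your proposal is correct and follows essentially the same route as the paper: you add $\lambda$ times Lemma \ref{lem:fb-eg-iterate-descent} (converted via Lemma \ref{lem:lower-upper-bound-nuk}(ii) and the gradient inequality for $h$) to Lemma \ref{lem:descent-E-discrete}, and you absorb the cross term $\frac{2k(\Delta\varepsilon_k)}{s}(H(x_{k+1})-H(y_k))$ into the $\|x_{k+1}-y_k\|^2$ term via Young's inequality. The differences are minor. You use a one-sided subgradient bound at $y_k$, which is slightly more careful than the paper's appeal to ``local Lipschitz continuity'' of $H$. You also absorb the whole negative term rather than only a $\mu C_H s$ fraction of it, which merely changes what the constants $\mu$ and $C_H$ denote.
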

\begin{proof}
We use the local Lipschitz continuity of $H$ on the bounded sequences $(x_{k})_{k\geq 0}$ and $(y_{k})_{k\geq 0}$, which is a consequence of $\nabla h$ being Lipschitz continuous and $\partial \hat{h}$ mapping bounded subsets of its domain to bounded subsets of $\cH$. For some constant $C_{H} \geq 0$, according to Young's inequality, for every $\mu > 0$ and every $k \geq 1$, we have 
    \[
        \frac{2 k | \Delta \varepsilon_{k}|}{s} | H(x_{k + 1}) - H(y_{k})| \leq \frac{2 C_{H} k | \Delta\varepsilon_{k}|}{s} \| x_{k + 1} - y_{k}\| \leq \frac{\mu C_{H} s}{s^{2}} k \|x_{k + 1} - y_{k}\|^{2} + \frac{C_{H}}{\mu s } k | \Delta \varepsilon_{k}|^{2}. 
    \]
    Choose $\mu > 0$ small enough such that $0 < \mu C_{H} s < 1 - s^{2} L_{V}^{2}$. Plugging the previous line into the descent property given by Lemma \ref{lem:descent-E-discrete} gives, for every $k \geq 1$, 
    \begin{equation}\label{eq:descent-Ek-2}
        \Delta E_{k} \leq \mathcal{V}_{k} - \frac{k (1 - s^{2}L_{k}^{2} - \mu C_{H}s)}{s^{2}} \| x_{k + 1} - y_{k}\|^{2} + \frac{C_{H}}{\mu s} k | \Delta \varepsilon_{k}|^{2} + \frac{2 \Delta (k (\Delta\varepsilon_{k}))}{s} H(y_{k + 1}).
    \end{equation}
    Now, recall part (ii) of Lemma \ref{lem:lower-upper-bound-nuk}. We combine it with the descent property provided by Lemma \ref{lem:fb-eg-iterate-descent} together with the gradient inequality applied to $h$ to produce, for every sufficiently large  $k \geq 1$,
    \begin{align*}
        \| x_{k + 1} - x^{*}\|^{2} &\leq \| x_{k} - x^{*}\|^{2} - \frac{s^{2} (1 - s L_{k})}{5} \mathcal{V}_{k} - 2s \Bigl[ \langle y_{k} - x^{*}, \Phi_{k}(y_{k})\rangle + g_{k}(y_{k}) - g_{k}(x^{*})\Bigr] \\
        &\leq \| x_{k} - x^{*}\|^{2} - \frac{s^{2} (1 - sL_{k})}{5} \mathcal{V}_{k} - 2s F(y_{k}) - 2s \varepsilon_{k} (H(y_{k}) - H(x^{*})).
    \end{align*}
    We multiply the previous inequality by $\lambda$ and add it to \eqref{eq:descent-Ek-2}, and we drop the norm squared term corresponding to $\| x_{k + 1} - y_{k}\|^{2}$, which yields, for every sufficiently large  $k \geq 1$, 
    \begin{align*}
        \Delta E_{\lambda, k} &\leq \left(1 - \frac{\lambda s^{2} (1 - s L_{k})}{5}\right) \mathcal{V}_{k} - 2\lambda s F(y_{k}) - 2\lambda s \varepsilon_{k} (H(y_{k}) - H(x^{*})) + \frac{C_{H}}{\mu s} k |\Delta \varepsilon_{k}|^{2} \\
        &\rph{\leq} + \frac{2 \Delta (k (\Delta \varepsilon_{k}))}{s} H(y_{k + 1}).
    \end{align*}
This is the desired statement.
\end{proof}
Now, we choose $\lambda > 0$ large enough such that $\frac{\lambda s^{2} (1 - sL_{k})}{5} > 1$ for every sufficiently large $k \geq 1$. The following theorem provides the partial convergence guarantees which hold under the standing Assumption \ref{ass:standing-disc} and provided $(x_{k})_{k\geq 0}$ is bounded.
\begin{theorem}\label{thm:ergodic-rate-discrete}
    Let $(x_{k})_{k\geq 0}$ and $(y_{k})_{k\geq 0}$ be the sequences given by Algorithm \ref{alg:fb-reg-eg}, and let $x^{*}$ be an optimal solution to \eqref{eq:bilevel-where-inner-is-monotone-inclusion}. Assume that $(x_{k})_{k\geq 0}$ is bounded. Define, for every $k \geq 1$, 
    \[
        \bar{y}_{k} := \frac{1}{\sum_{\ell = 1}^{k} \zeta_{\ell}} \sum_{\ell = 1}^{k} \zeta_{\ell} y_{\ell}, 
    \]
    where $\zeta_{k} := 2\lambda s \varepsilon_{k}$. Let $(\xi_{\hat{f}, k})_{k\geq 1}$ be given by \eqref{eq:alg-2-rewritten-2}. Then, as $k\to +\infty$, we have the convergence rates as shown by the following table:
    \begin{whitetablebox}
        \begin{tblr}{
            width = \linewidth,
            colspec = {
                Q[c,m,wd=0.18\linewidth]
                X[c,m]
                X[c,m]
                X[c,m]
            },
            cells = {
                mode = math,
            },
            cell{1}{2-Z} = {
                cmd = \textstyle,
            },
            cell{2-Z}{2-Z} = {
                bg = gray!10,
                cmd = \displaystyle,
            },
            rows = {
                rowsep = 8pt,
            },
            columns = {
                colsep = 8pt,
            },
            cell{2-Z}{2-Z} = {
                bg = gray!10,
            },
            hline{3,4} = {2-Z}{1.2pt,white},
            vline{3,4} = {2-Z}{1.2pt,white},
        }
            \toprule
            &
            \| V(x_{k}) + \xi_{\hat{f}, k}\|
            &
            \frac{1}{k} \sum_{\ell = 1}^{k} F(y_{\ell})
            &
            H(\bar{y}_{k}) - H(x^{*})
            \\
            \midrule
            \delta>1
            &
            \mathcal{O} \left(\frac{1}{\sqrt{k}}\right)
            &
            \mathcal{O} \left(\frac{1}{k}\right)
            &
            \text{--}
            \\
            \delta=1
            &
            \mathcal{O} \left(\sqrt{\frac{\ln k}{k}}\right)
            &
            \mathcal{O} \left(\frac{\ln k}{k}\right)
            &
            \leq
            \mathcal{O} \left(\frac{1}{\ln k}\right)
            \\
            0 < \delta < 1 
            &
            \mathcal{O}\left(\frac{1}{k^{\frac{\delta}{2}}}\right)
            &
            \mathcal{O}\left(\frac{1}{k^{\delta}}\right)
            &
            \leq \mathcal{O}\left(\frac{1}{k^{1 - \delta}}\right)
            \\
            \bottomrule
        \end{tblr}
    \end{whitetablebox}
    where we write $\leq$ to emphasize that we only obtain an upper bound for the sign-less quantity $H(\bar{y}_{k}) - H(x^{*})$. 
\end{theorem}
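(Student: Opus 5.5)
The plan mirrors the proof of Theorem \ref{thm:ergodic-rate-cont}: sum the descent inequality of Lemma \ref{lem:ergodic-rate-ineq-disc} and read off the three columns of the table from the resulting telescoped bound.

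\textbf{Step 1: telescoping.} Fix $\lambda$ as chosen before the theorem, so that the coefficient of $\mathcal{V}_{k}$ on the left of Lemma \ref{lem:ergodic-rate-ineq-disc} is nonnegative for $k \geq k_{0}$. Sum the inequality from $k_{0}$ to $k-1$. The error terms are controlled as follows.
\begin{itemize}
\item Since $|\Delta\varepsilon_{\ell}| \asymp \ell^{-\delta-1}$, the sum $\sum_\ell \ell\,|\Delta\varepsilon_{\ell}|^{2} \asymp \sum_\ell \ell^{-2\delta-1}$ is finite.
\item The quantity $\Delta(\ell\,\Delta\varepsilon_{\ell})$ is $\mathcal{O}(\ell^{-\delta-1})$ and has a constant sign for large $\ell$. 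Hence $\sum_\ell |\Delta(\ell\,\Delta\varepsilon_{\ell})|$ is finite.
\item $H(y_{\ell+1})$ is bounded, because $(y_\ell)$ is bounded (by boundedness of $(x_\ell)$ and nonexpansiveness of the prox) and $H$ is locally Lipschitz on bounded sets, as in the proof of Lemma \ref{lem:ergodic-rate-ineq-disc}.
\end{itemize}
So the right-hand side sums to at most a constant $C$. We also drop the nonnegative $\mathcal{V}$ terms on the left and write $\zeta_\ell = 2\lambda s\varepsilon_\ell$. This gives
\[
E_{\lambda,k} + \sum_{\ell=k_0}^{k-1}\zeta_\ell\bigl(H(y_\ell)-H(x^*)\bigr) + 2s\lambda\sum_{\ell=k_0}^{k-1}F(y_\ell) \leq C .
\]

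\textbf{Step 2: lower bounds on the left side.} Use
\[
E_{\lambda,k}\geq (k-1)\mathcal{V}_k + \frac{2k\Delta\varepsilon_k}{s}H(y_k),
\]
where the second term is $\mathcal{O}(k^{-\delta})$ and hence bounded. Next, $H(y_\ell)-H(x^*) \geq \inf_{\text{bounded set}}H - H(x^*)$, so the $\zeta$-sum is bounded below by $-C'\sum_{\ell\le k}\varepsilon_\ell$. This yields
\[
(k-1)\mathcal{V}_k + 2s\lambda\sum_{\ell} F(y_\ell) \leq C + C'\sum_{\ell\leq k}\varepsilon_\ell .
\]
The partial sum $\sum_{\ell\le k}\varepsilon_\ell$ is $\mathcal{O}(1)$, $\mathcal{O}(\ln k)$, or $\mathcal{O}(k^{1-\delta})$ according to the three regimes. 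Dividing by $k$ gives the middle column. Note that $F\geq0$ by the Remark, so both terms on the left are individually controlled.

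For the first column, Lemma \ref{lem:lower-upper-bound-nuk}(iii) gives
\[
\|\Phi_{k-1}(x_k)+\xi_k\|^2\leq\mathcal{V}_{k-1}.
\]
Then
\[
V(x_k)+\xi_{\hat f,k} = \Phi_{k-1}(x_k)+\xi_k - \varepsilon_{k-1}\bigl(\nabla h(x_k)+\xi_{\hat h,k}\bigr),
\]
and the correction term is $\mathcal{O}(\varepsilon_k)$ by boundedness of $\nabla h$ and of $\partial\hat h$ on bounded sets. In each regime $\varepsilon_k$ is dominated by the rate for $\sqrt{\mathcal{V}_{k-1}}$: when $\delta\ge1$ we have $k^{-\delta}\le k^{-1/2}$, and when $\delta<1$ we have $k^{-\delta}\le k^{-\delta/2}$. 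This gives the first column.

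\textbf{Step 3: the $H$ column.} Now drop the $F$ sum and the $\mathcal{V}$ term instead, and use Jensen's inequality for the convex $H$:
\[
\Bigl(\sum_\ell\zeta_\ell\Bigr)\bigl(H(\bar y_k)-H(x^*)\bigr)\leq C .
\]
Dividing by $\sum_\ell\zeta_\ell$, which is of order $\ln k$ or $k^{1-\delta}$, gives the last column. When $\delta>1$ this sum is bounded, so no rate follows, matching the dash in the table.

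The main obstacle I expect is Step 1, specifically the term $\frac{2\Delta(k\Delta\varepsilon_k)}{s}H(y_{k+1})$. Its summability needs both the decay $\Delta(k\Delta\varepsilon_k)=\mathcal{O}(k^{-1-\delta})$, checked by Taylor expansion of $(k+a)^{-\delta}$, and the boundedness of $H$ along $(y_k)$. A second point to handle carefully is the boundary term $\frac{2k\Delta\varepsilon_k}{s}H(y_k)$ inside $E_{\lambda,k}$, which must be shown to stay bounded.
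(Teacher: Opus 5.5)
Your proposal is correct and follows essentially the same route as the paper: you sum Lemma \ref{lem:ergodic-rate-ineq-disc}, control the error terms through the summability of $k|\Delta\varepsilon_k|^2$ and $|\Delta(k\Delta\varepsilon_k)|$ together with the boundedness of $H(y_k)$, read off the rates for $\mathcal{V}_k$ and for the averaged $F(y_\ell)$, transfer these to $\|V(x_k)+\xi_{\hat f,k}\|$ via Lemma \ref{lem:lower-upper-bound-nuk}(iii), and apply Jensen's inequality to get the bound on $H(\bar y_k)$. Your lower bound for $H$ on a bounded set containing $(y_\ell)$ is slightly more careful than the global $\inf H$ the paper writes, since Assumption \ref{ass:standing-disc} does not require $H$ to be bounded below.
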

\begin{proof}
    The proof proceeds in a virtually identical fashion as the proof for Theorem \ref{thm:ergodic-rate-cont}, so we don't include the full details. We choose $k \geq 1$, sum the inequality given by Lemma \ref{lem:ergodic-rate-ineq-disc} from $\ell = 1$ to $k$ (without loss of generality, all ``for large enough $k$'' statements can be assumed to hold starting at $k = 1$) and apply Jensen's inequality to $H$ to obtain 
    \begin{align*}
        &\rph{\leq} k \mathcal{V}_{k + 1} + \frac{2(k + 1) (\Delta \varepsilon_{k + 1})}{s} H(y_{k + 1}) + \left(\sum_{\ell = 1}^{k} \zeta_{\ell}\right) (\inf H - H(x^{*})) \\
        &\leq k \mathcal{V}_{k + 1} + \frac{2(k + 1) (\Delta \varepsilon_{k + 1})}{s} H(y_{k + 1}) + \left(\sum_{\ell = 1}^{k} \zeta_{\ell}\right) (H(\bar{y}_{k}) - H(x^{*})) \\
        &\leq k \mathcal{V}_{k + 1} + \frac{2(k + 1) (\Delta \varepsilon_{k + 1})}{s} H(y_{k + 1}) + \sum_{\ell = 1}^{k} \zeta_{\ell} (H(y_{\ell}) - H(x^{*})) \\
        &\leq E_{\lambda, k + 1} + \sum_{\ell = 1}^{k} \zeta_{\ell} (H(y_{\ell}) - H(x^{*})) \\
        &\leq -2s \sum_{\ell = 1}^{k}  F(y_{\ell}) + \sum_{\ell = 1}^{k} \left[ \frac{C_{H}}{\mu s} k |\Delta \varepsilon_{k}|^{2} + \frac{2\Delta ( k(\Delta \varepsilon_{k}))}{s} H(y_{k})\right] + E_{\lambda, 1}.
    \end{align*}
    Now, just argue like in the proof for Theorem \ref{thm:ergodic-rate-cont}, taking into account that the the summands between square brackets are summable, since the boundedness of $(x_{k})_{k\geq 0}$ implies the boundedness of $(y_{k})_{k\geq 0}$ and 
    \[
        (k |\Delta \varepsilon_{k}|^{2})_{k \geq 1} = \mathcal{O} \left( \frac{1}{k^{2\delta + 1}}\right) \quad \text{and} \quad (|\Delta (k(\Delta \varepsilon_{k}))|)_{k \geq 1} = \mathcal{O} \left( \frac{1}{k^{\delta + 1}}\right)
    \]
    are summable. The term $\frac{2(k + 1) (\Delta \varepsilon_{k + 1})}{s} H(y_{k + 1})$ approaches zero as $k \to +\infty$ and thus doesn't affect the rates. In a first step, we get rates for $(\mathcal{V}_{k})_{k \geq 1}$ and $(\frac{1}{k} \sum_{\ell = 1}^{k} F(y_{\ell}))_{k \geq 1}$, and then we recall part (iii) of Lemma \ref{lem:lower-upper-bound-nuk}
    \[
        \| V(x_{k + 1}) + \xi_{\hat{f}, k + 1} + \varepsilon_{k} (\nabla h(x_{k + 1}) + \xi_{\hat{h}, k + 1})\|^{2} =\| \Phi_{ k}(x_{k + 1}) + \xi_{k + 1}\|^{2} \leq \mathcal{V}_{k} \quad \forall k \geq 0.
    \] 
    Since $(\nabla h(x_{k}))_{k\geq 0}, (\xi_{\hat{h}, k})_{k \geq 1}$ are bounded, this yields the rates for $\| V(x_{k}) + \xi_{\hat{f}, k}\|$. The upper bounds for $H(\bar{y}_{k}) - H(x^{*})$ are straightforward to check. 
\end{proof}

\begin{remark}
    In the composite smooth+nonsmooth setting, we had to use the energy function $\mathcal{V}_{k}$, which upper bounds the norm squared of the full operator. However, it is worth noting that in the purely smooth case (i.e., $\hat{f} = \hat{h} \equiv 0$), the analysis can be done using more standard measures of optimality (such as analyzing the variation of $\| \Phi_{k}(x_{k})\|^{2}$), and is closer to the continuous-time analysis. Indeed, one can show, for every $k \geq 0$ that 
    \[
        \| x_{k + 1} - x^{*}\|^{2} \leq \| x_{k} - x^{*}\|^{2} - s^{2} (1 - s^{2} L_{k}^{2}) \| \Phi_{k}(x_{k})\|^{2} - 2s \langle y_{k} - x^{*}, \Phi_{k}(y_{k})\rangle, 
    \]
    and that, if $E_{k} = \frac{s^{2}(k - 1)}{2} \| \Phi_{k}(x_{k})\|^{2} + s(\Delta \varepsilon_{k}) (h(x_{k}) - h(x^{*}))$, for every $k \geq 1$, then 
    \begin{align*}
        \Delta E_{k} &\leq - s k \langle \Phi_{k + 1}(x_{k + 1}) - \Phi_{k + 1}(x_{k}), x_{k+1} - x_k\rangle - \frac{(1 - (1 + \gamma) s^{2} L_{k}^{2}) k}{2} \| x_{k + 1} - y_{k}\|^{2} \\
        &\rph{\leq} + \frac{s^{2}}{2} \| \Phi_{k}(x_{k})\|^{2} + \frac{s^{2}}{2}\left(1 + \frac{1}{\gamma}\right) k (\Delta \varepsilon_{k})^{2} \| \nabla h(x_{k + 1})\|^{2} + s \Delta ((\Delta \varepsilon_{k}) k) (h(x_{k + 1}) - h(x^{*})),
    \end{align*}
    where $\gamma > 0$ is chosen small enough.
\end{remark}

\subsection{Geometric assumptions on \texorpdfstring{$V + \partial \hat{f}$}{}}
Unfortunately, in the composite setting there is not an obvious way to state the Attouch--Czarnecki assumption in terms of the Fitzpatrick function of $V + \partial \hat{f}$. Instead, we have to settle for a summability condition expressed in terms of $F^{*}$. 
\begin{ass}\label{ass:Attouch-Czarnecki-inclusion}
    We say that the operator $V + \partial \hat{f}$ satisfies a \emph{discrete Attouch--Czarnecki-type} condition if for every optimal solution $x^{*}$ to \eqref{eq:bilevel-where-inner-is-monotone-inclusion} and every $p^*\in N_{\zer(V + \partial \hat{f})}(x^{*})$, it holds
    \[
        \sum_{k = 0}^{+\infty} \Bigl[ F^{*}(\varepsilon_{k} p^*) - \sigma_{\zer(V + \partial \hat{f})}(\varepsilon_{k} p^*)\Bigr] < +\infty.
    \]
\end{ass}
\noindent In this setting, the appropriate sharpness condition we need to make is the following.
\begin{ass}\label{ass:sharpness-inclusion}
    We say the operator $V + \partial \hat{f}$ satisfies a \emph{sharpness condition} with exponent $\rho \in (1, 2)$ if for every optimal solution $x^{*}$ to \eqref{eq:bilevel-where-inner-is-monotone-inclusion} there exists some constant $\tau > 0$ such that
    \[
        \tau \rho^{-1} \dist(x, \zer (V + \partial \hat{f}))^{\rho} \leq \langle x - x^{*}, V(x)\rangle + \hat{f}(x) - \hat{f}(x^{*}) \quad \forall x \in \cH. 
    \]
\end{ass}
The following lemma collects two results that relate Assumptions \ref{ass:Attouch-Czarnecki-inclusion} and \ref{ass:sharpness-inclusion}.
\begin{lemma}\label{lem:assumptions-inclusion}
    Let $x^{*}$ be an optimal solution to \eqref{eq:bilevel-where-inner-is-monotone-inclusion} and let $p^{*} \in -\partial H(x^{*}) \cap N_{\zer(V + \partial \hat{f})}(x^{*})$. Then the following statements hold:
    \begin{enumerate}[\rm (i)]
        \item For every $x\in \cH$ and every $\varepsilon > 0$, we have 
        \[
            - F(x) - \varepsilon (H(x) - H(x^{*})) \leq F^{*}(\varepsilon p^{*}) - \sigma_{\zer (V + \partial \hat{f})}(\varepsilon p^{*}) \text{;}
        \]
        \item Suppose Assumption \ref{ass:sharpness-inclusion} holds for some exponent $\rho \in (1, 2)$ and constant $\tau >0$. Let $\rho^{*}$ be the Hölder conjugate of $\rho$, i.e., $\frac{1}{\rho} + \frac{1}{\rho^{*}} = 1$. Then, for every $x, u \in \cH$, we have 
        \begin{align}
            F^{*}(u) - \sigma_{\zer(V + \partial \hat{f})}(u) &\leq \frac{\tau^{1 - \rho^{*}}}{\rho^{*}} \| u\|^{\rho^{*}}
        \end{align}
    and
    \begin{align}
            H(x^{*}) - H(x) &\leq \| p^{*}\| \left( \frac{\rho F(x)}{\tau}\right)^{\frac{1}{\rho}}. 
        \end{align}
    \end{enumerate}
\end{lemma}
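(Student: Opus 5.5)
The proof mirrors that of Lemma \ref{lem:assumptions}, with $F$ (which is convex: $x\mapsto\langle x,V(x)\rangle$ is not convex in general, so we avoid needing that) taking the place of the Fitzpatrick function. Throughout, write $Z := \zer(V+\partial \hat f)$.

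For (i), let $p^{*} \in -\partial H(x^{*}) \cap N_{Z}(x^{*})$ and $\varepsilon>0$. Since $\varepsilon p^{*} \in N_{Z}(x^{*})$, we have $\langle x^{*}, \varepsilon p^{*}\rangle = \sigma_{Z}(\varepsilon p^{*})$. Since $-p^{*} \in \partial H(x^{*})$, the subgradient inequality gives $H(x) - H(x^{*}) \geq \langle x - x^{*}, -p^{*}\rangle$. Therefore
\[
-F(x) - \varepsilon(H(x)-H(x^{*})) \leq \langle x, \varepsilon p^{*}\rangle - F(x) - \langle x^{*}, \varepsilon p^{*}\rangle \leq F^{*}(\varepsilon p^{*}) - \sigma_{Z}(\varepsilon p^{*}),
\]
where the last step uses only the definition of the conjugate, $F^{*}(u) = \sup_x\{\langle x,u\rangle - F(x)\}$. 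This needs no convexity of $F$.

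For the first inequality in (ii), Assumption \ref{ass:sharpness-inclusion} can be rewritten as
\[
\Bigl(\iota_{Z} \infconv \tfrac{\tau}{\rho}\|\cdot\|^{\rho}\Bigr)(x) = \tfrac{\tau}{\rho}\dist(x,Z)^{\rho} \leq F(x) \quad \forall x\in\cH.
\]
The infimal convolution is exact because $Z$ is closed and convex; $V+\partial\hat f$ is maximally monotone, as $V$ is Lipschitz and monotone. Conjugation reverses inequalities, so
\[
F^{*}(u) \leq \Bigl(\iota_{Z} \infconv \tfrac{\tau}{\rho}\|\cdot\|^{\rho}\Bigr)^{*}(u) = \sigma_{Z}(u) + \tfrac{\tau^{1-\rho^{*}}}{\rho^{*}}\|u\|^{\rho^{*}},
\]
using that the conjugate of an infimal convolution is the sum of the conjugates, together with the standard formula for the conjugate of $\frac{\tau}{\rho}\|\cdot\|^{\rho}$. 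This is exactly the computation in Lemma \ref{lem:assumptions}(ii).

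For the second inequality in (ii), let $P_{Z}x$ be the projection of $x$ onto $Z$. The subgradient inequality for $H$ at $x^{*}$ with $-p^{*}\in\partial H(x^{*})$ gives $H(x^{*}) - H(x) \leq \langle p^{*}, x - x^{*}\rangle$. Split this as
\[
\langle p^{*}, x - P_{Z}x\rangle + \langle p^{*}, P_{Z}x - x^{*}\rangle.
\]
The second term is $\leq 0$ because $p^{*}\in N_{Z}(x^{*})$ and $P_{Z}x\in Z$. The first term is at most $\|p^{*}\|\dist(x,Z)$ by Cauchy--Schwarz. Finally, sharpness gives $\dist(x,Z) \leq (\rho F(x)/\tau)^{1/\rho}$, which yields the claim.

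The only step that might look delicate is that $F$ need not be convex. This is harmless: the definition of $F^{*}$ and the order-reversing property of conjugation are valid for arbitrary functions. The only genuine facts needed are the closedness and convexity of $Z$ and the standard conjugate formulas, so I expect no real obstacle.
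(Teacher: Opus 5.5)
Your proof is correct and matches the paper's argument: (i) follows from the subgradient inequality for $H$ at $x^{*}$ together with the definition of $F^{*}$, and (ii) uses the same infimal-convolution conjugation and projection argument as Lemma \ref{lem:assumptions}(ii), which is exactly what the paper invokes. One small fix: your opening parenthetical contradicts itself (it calls $F$ convex and then says it is not), but, as you correctly note later, convexity of $F$ is never needed.
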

\begin{proof}
(i) Since $\varepsilon p^{*} \in N_{\zer(V + \partial \hat{f})}(x^{*})$, it holds $\langle x^{*}, \varepsilon p^{*}\rangle = \sigma_{\zer(V + \partial \hat{f})}(\varepsilon p^{*})$. Using the subgradient inequality on $H$, for every $x \in \cH$, we obtain
    \begin{align*}
        - F(x) - \varepsilon (H(x) - H(x^{*})) &\leq - F(x) - \varepsilon \langle x - x^{*}, -p^{*}\rangle \\
        &= \langle x, \varepsilon p^{*}\rangle - F(x) - \langle x^{*}, \varepsilon p^{*}\rangle \\
        &\leq F^{*}(\varepsilon p^{*}) - \sigma_{\zer(V + \partial \hat{f})}(\varepsilon p^{*}).
    \end{align*}

    (ii) This is derived exactly as in the proof of Lemma \ref{lem:assumptions} (ii). 
\end{proof}
\begin{remark}
    As a direct corollary of part (ii) of Lemma \ref{lem:assumptions-inclusion}, Assumption \ref{ass:sharpness-inclusion} implies Assumption \ref{ass:Attouch-Czarnecki-inclusion} whenever $\frac{1}{\rho^{*}} < \delta$.
\end{remark}

\subsection{Convergence analysis under geometric assumptions on \texorpdfstring{$V + \partial \hat{f}$}{}}

Just as in the continuous-time setting, we will be able to derive stronger convergence guarantees under the previously introduced geometric assumptions on $V + \partial \hat{f}$. We begin with a summability result.

\begin{lemma}\label{lem:lim-xk-exists-fb-eg}
    Let $(x_{k})_{k\geq 0}$ and $(y_{k})_{k\geq 0}$ be given by Algorithm \ref{alg:fb-reg-eg}, and let $x^{*}$ be an optimal solution to \eqref{eq:bilevel-where-inner-is-monotone-inclusion}. Assume $V + \partial \hat{f}$ satisfies Assumption \ref{ass:Attouch-Czarnecki-inclusion}. Then, the limit $\lim_{k\to +\infty} \| x_{k} - x^{*}\|^{2}$ exists. Moreover, it holds 
    \[
        \sum_{k = 0}^{+\infty} \Bigl[ \| x_{k + 1} - y_{k}\|^{2} + \| x_{k} - y_{k}\|^{2}\Bigr] < +\infty.
    \]
    In particular, $(\mathcal{V}_{k})_{k\geq 0}$ is summable.
\end{lemma}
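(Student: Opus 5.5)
We start from the distance descent inequality \eqref{eq:fb-eg-iterate-descent} of Lemma \ref{lem:fb-eg-iterate-descent} and mimic the proof of Proposition \ref{prop:lim-trajectories-and-operator}.

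\textbf{Step 1: rewrite the descent inequality.} The subgradient inequality on $\hat f$ together with monotonicity of $\nabla h$ gives, with $g_k=\hat f+\varepsilon_k\hat h$,
\[
\langle y_k-x^*,\Phi_k(y_k)\rangle+g_k(y_k)-g_k(x^*)\ \ge\ F(y_k)+\varepsilon_k\bigl(H(y_k)-H(x^*)\bigr),
\]
where we used $\langle y_k-x^*,\nabla h(y_k)\rangle\ge h(y_k)-h(x^*)$. Since $sL_k\to sL_V<1$, we have $1-sL_k\ge \eta>0$ for all large $k$. Hence, for large $k$,
\[
\|x_{k+1}-x^*\|^2+\eta\bigl[\|x_k-y_k\|^2+\|x_{k+1}-y_k\|^2\bigr]+sF(y_k)\le \|x_k-x^*\|^2-s\bigl[F(y_k)+2\varepsilon_k(H(y_k)-H(x^*))\bigr].
\]
Here we split $2sF(y_k)$ into two halves, keeping one half $sF(y_k)\ge 0$ on the left. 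This half is nonnegative by the remark following \eqref{eq:definition-inner-object}.

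\textbf{Step 2: bound the bracket via Lemma \ref{lem:assumptions-inclusion}(i).} Assumption \ref{ass:standing-disc}(ii) guarantees that some $p^*\in-\partial H(x^*)\cap N_{\zer(V+\partial\hat f)}(x^*)$ exists. Applying Lemma \ref{lem:assumptions-inclusion}(i) with $\varepsilon=2\varepsilon_k$ and $x=y_k$ gives
\[
-\bigl[F(y_k)+2\varepsilon_k(H(y_k)-H(x^*))\bigr]\le F^*(2\varepsilon_kp^*)-\sigma_{\zer(V+\partial\hat f)}(2\varepsilon_kp^*)=:\beta_k\ge 0.
\]
By Assumption \ref{ass:Attouch-Czarnecki-inclusion}, applied with the normal vector $2p^*\in N_{\zer(V+\partial\hat f)}(x^*)$ (a cone), the sequence $(\beta_k)$ is summable. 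One does not even need to split by the sign of $H(y_k)-H(x^*)$ as in the continuous case: the inequality holds uniformly.

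\textbf{Step 3: conclude.} We obtain
\[
a_{k+1}\le a_k-b_k+s\beta_k,\qquad a_k:=\|x_k-x^*\|^2,\quad b_k:=\eta\bigl[\|x_k-y_k\|^2+\|x_{k+1}-y_k\|^2\bigr]+sF(y_k)\ge0,
\]
with $\sum\beta_k<\infty$. The standard quasi-Fejér lemma then yields that $\lim_k a_k$ exists and that $\sum_k b_k<\infty$. This gives the stated summability, and also the summability of $\sum F(y_k)$, which will be useful later.

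Finally, Lemma \ref{lem:lower-upper-bound-nuk}(ii) gives $\mathcal V_k\le \frac5{s^2}\bigl[\|x_{k+1}-y_k\|^2+\|x_k-y_k\|^2\bigr]$ for large $k$, so $(\mathcal V_k)$ is summable; the finitely many initial terms are irrelevant.

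\textbf{Main obstacle.} The only delicate point is the bookkeeping in Step 1–2. We must keep enough of $F(y_k)$ so that, paired with the regularized outer term, it matches the exact form in Lemma \ref{lem:assumptions-inclusion}(i). Doubling $\varepsilon$ via the cone property of the normal cone handles this. We must also track the "sufficiently large $k$" restriction coming from $1-sL_k>0$.
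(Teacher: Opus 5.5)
Your proof is correct and is essentially the paper's argument: you insert Lemma \ref{lem:assumptions-inclusion}(i) into the Fej\'er-type inequality of Lemma \ref{lem:fb-eg-iterate-descent}, use the summability from Assumption \ref{ass:Attouch-Czarnecki-inclusion}, and pass to $\mathcal{V}_k$ via the upper bound of Lemma \ref{lem:lower-upper-bound-nuk}(ii). The paper cites part (iii) at that step, but (ii), which you use, is the one actually needed. The only deviation is that the paper applies Lemma \ref{lem:assumptions-inclusion}(i) directly with $\varepsilon=\varepsilon_k$ and discards $F(y_k)$ entirely, whereas you keep $sF(y_k)$ on the left and work with $2\varepsilon_k$ and $2p^*\in N_{\zer(V+\partial\hat f)}(x^*)$ instead; this is legitimate because the normal cone is a cone, and it yields the extra information $\sum_k F(y_k)<+\infty$.
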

\begin{proof}
    According to Lemma \ref{lem:assumptions-inclusion}, for every $k\geq 0$, we have 
    \begin{equation*}
        - \langle y_{k} - x^{*}, \Phi_{k}(y_{k})\rangle - (g_{k}(y_{k}) - g_{k}(x^{*})) \leq F^{*}(\varepsilon_{k} p^{*}) - \sigma_{\zer (V + \partial \hat{f})}(\varepsilon_{k}p^{*}).
    \end{equation*}
Plugging this into the statement given by Lemma \ref{lem:fb-eg-iterate-descent} yields, for every $k \geq 0$,
    \[
        \| x_{k + 1} - x^{*}\|^{2} \leq \| x_{k} - x^{*}\|^{2} - (1 - s L_{k}) \Bigl[ \| x_{k} - y_{k}\|^{2} + \| x_{k + 1} - y_{k}\|^{2} \Bigr] + 2s(F^{*}(\varepsilon_{k} p^{*}) - \sigma_{\zer(V + \partial \hat{f})}(\varepsilon_{k} p^{*})).
    \]
The last summand of the right-hand side of the previous inequality is summable by assumption. This, together with part (iii) of Lemma \ref{lem:lower-upper-bound-nuk}, yield our claims. 
\end{proof}

We are now in a position to prove one of the main results of this section. 
\begin{proposition}\label{prop:descent-Ek}
    Let $(x_{k})_{k\geq 0}$ and $(y_{k})_{k\geq 0}$ be the sequences given by Algorithm \ref{alg:fb-reg-eg}. Assume that $V + \partial \hat{f}$ satisfies Assumption \ref{ass:Attouch-Czarnecki-inclusion}. Then
    \[
        \mathcal{V}_{k} = o\left(\frac{1}{k}\right) \ \mbox{as} \ k \to +\infty. 
    \]  
\end{proposition}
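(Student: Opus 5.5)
We will prove that $k\mathcal{V}_k \to 0$ in three moves: show that $E_k$ has a limit, transfer that limit to $k\mathcal{V}_k$, and then use summability of $\mathcal{V}_k$ to force the limit to be zero. This mirrors the last part of the proof of Proposition \ref{prop:lim-trajectories-and-operator}. The starting point is Lemma \ref{lem:lim-xk-exists-fb-eg}. It gives that $(x_k)_{k\geq 0}$ is bounded, since $\lim_k\|x_k-x^*\|$ exists, and that
\[
\sum_{k\geq 0}\Bigl[\|x_{k+1}-y_k\|^2+\|x_k-y_k\|^2\Bigr]<+\infty,
\]
so in particular $\sum_k \mathcal{V}_k<+\infty$. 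Boundedness of $(x_k)$ gives boundedness of $(y_k)$, because the prox and $\Phi_k$ are Lipschitz with uniformly bounded constants. Consequently $H$ is Lipschitz on the relevant bounded set, with constant $C_H$, exactly as in the proof of Lemma \ref{lem:ergodic-rate-ineq-disc}. We will also check that $(H(y_k))_k$ is bounded: $\partial\hat h$ maps bounded subsets of its domain to bounded sets, and $y_k\in\operatorname{dom}\hat h$.

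\textbf{Step 1: $E_k$ has a limit.} We apply Lemma \ref{lem:descent-E-discrete} and estimate its right-hand side term by term. The term $\mathcal{V}_k$ is summable. The quadratic term $-\frac{k(1-s^2L_k^2)}{s^2}\|x_{k+1}-y_k\|^2$ is nonpositive for large $k$. The cross term we bound by Young's inequality, as in Lemma \ref{lem:ergodic-rate-ineq-disc}:
\[
\frac{2k|\Delta\varepsilon_k|}{s}\,|H(x_{k+1})-H(y_k)|\leq \frac{\mu C_H}{s}\,k\|x_{k+1}-y_k\|^2+\frac{C_H}{\mu s}\,k|\Delta\varepsilon_k|^2 .
\]
With $\mu$ small, the first part is absorbed by the negative quadratic term, and the second part is summable because $k|\Delta\varepsilon_k|^2=\mathcal{O}(k^{-2\delta-1})$. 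The last term is summable because $|\Delta(k\Delta\varepsilon_k)|=\mathcal{O}(k^{-\delta-1})$ and $(H(y_{k+1}))_k$ is bounded. Hence
\[
\Delta E_k\leq w_k \quad\text{with } (w_k)_k \text{ nonnegative and summable}.
\]
For the lower bound, Lemma \ref{lem:lower-upper-bound-nuk}(i) gives $\mathcal{V}_k\ge 0$, and $k\Delta\varepsilon_k H(y_k)=\mathcal{O}(k^{-\delta})\to 0$, so $(E_k)_k$ is bounded below. A sequence that is bounded below and whose increments are bounded above by a summable sequence converges, so $\lim_k E_k$ exists.

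\textbf{Step 2: transfer to $k\mathcal{V}_k$.} Since $k\Delta\varepsilon_k H(y_k)\to 0$, the existence of $\lim_k E_k$ gives that $\lim_k (k-1)\mathcal{V}_k$ exists. Because $\mathcal{V}_k\to 0$ (it is summable), $\lim_k k\mathcal{V}_k$ exists as well; call it $\ell\geq 0$.

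\textbf{Step 3: identify the limit.} Since
\[
\sum_k \frac{1}{k}\bigl(k\mathcal{V}_k\bigr)=\sum_k \mathcal{V}_k<+\infty \quad\text{and}\quad \sum_k\frac1k=+\infty,
\]
the limit must be $\ell=0$. This is exactly $\mathcal{V}_k=o(1/k)$.

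The main point requiring care is the boundedness of $H(y_k)$ and the Lipschitz estimate for $H$ along the iterates. The function $\hat h$ is only proper, lower semicontinuous and convex, so these must come from the assumption that $\partial\hat h$ is bounded on bounded subsets of its domain, together with $x_k,y_k\in\operatorname{dom}\hat h$. We handle this as in the proof of Lemma \ref{lem:ergodic-rate-ineq-disc}, by taking subgradients $\xi_{\hat h,k}$ along the sequence. We also have to track the "sufficiently large $k$" restriction coming from $sL_k\leq 1$, which affects only finitely many terms.
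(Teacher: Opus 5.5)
Your proposal is correct and follows the paper's proof essentially step by step: the paper likewise starts from the Young-inequality form \eqref{eq:descent-Ek-2} of Lemma \ref{lem:descent-E-discrete}, bounds $\Delta E_k$ above by summable terms, and bounds $E_k$ below using $k|\Delta\varepsilon_k|\,|H(y_k)|\to 0$. It then obtains existence of $\lim_k (k-1)\mathcal{V}_k$ and forces this limit to be zero via $\sum_k \mathcal{V}_k<+\infty$, exactly as you do.

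One small point concerns your justification that $(y_k)$ is bounded. Lipschitz continuity alone is not enough, since $\prox_{sg_k}$ changes with $k$ and you would need a $k$-uniform anchor point. The conclusion follows at once from $\|x_k-y_k\|\to 0$, which is part of the summability you already cite from Lemma \ref{lem:lim-xk-exists-fb-eg}.
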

\begin{proof}
Let $x^{*}$ be an optimal solution to \eqref{eq:bilevel-where-inner-is-monotone-inclusion}.   We recall that in the proof for Lemma \ref{lem:ergodic-rate-ineq-disc}, under the boundedness of $(x_{k})_{k\geq 0}$, which now holds in light of Lemma \ref{lem:lim-xk-exists-fb-eg}, we had arrived at \eqref{eq:descent-Ek-2}, which reads
    \begin{equation*}
        \Delta E_{k} \leq \mathcal{V}_{k} - \frac{k (1 - s^{2}L_{k}^{2} - \mu C_{H}s)}{s^{2}} \| x_{k + 1} - y_{k}\|^{2} + \frac{C_{H}}{\mu s} k | \Delta \varepsilon_{k}|^{2} + \frac{2 \Delta (k (\Delta\varepsilon_{k}))}{s} H(y_{k + 1}) \quad \forall k \geq 1.
    \end{equation*}
    Now, according to Lemma \ref{lem:lim-xk-exists-fb-eg}, the sequence $(\mathcal{V}_{k})_{k\geq 0}$ is summable. The coefficient of $\| x_{k + 1} - y_{k}\|^{2}$ is eventually nonpositive. Since $(k |\Delta \varepsilon_{k}|^{2})_{k\geq 0}$ and $(| \Delta (k (\Delta \varepsilon_{k}))|)_{k\geq 0}$ are summable, and $(H(y_{k + 1}))_{k\geq 0}$ is bounded, we obtain that the remaining terms in the previous inequality are summable. Thus, on the one hand, we obtain that $ (\Delta E_{k})_{k \geq 1}$ is upper bounded by summable terms. On the other hand, since $\frac{2 k |\Delta \varepsilon_{k}|}{s} | H(y_{k})| \to 0$ as $k\to +\infty$, the sequence $(E_{k})_{k\geq 1}$ is lower bounded. This gives the existence of $\lim_{k\to +\infty} E_{k}$, and by extension of
    \[
        \lim_{k\to +\infty} (k - 1) \mathcal{V}_{k}. 
    \] 
    Since we also have $\sum_{k = 0}^{+\infty} \mathcal{V}_{k} < +\infty$, it must be the case that $\lim_{k\to +\infty} (k - 1)\mathcal{V}_{k} = 0$, which is equivalent to 
    \[
        \mathcal{V}_{k} = o \left( \frac{1}{k}\right)
    \]
    as $k\to +\infty$.
\end{proof}
Now we come to the actual rate for the full operator $\Phi_{k} + \partial g_{k}$. 
\begin{proposition}\label{prop:rate-full-operator}
Let $(x_{k})_{k\geq 0}$ and $(y_{k})_{k\geq 0}$ be the sequences given by Algorithm \ref{alg:fb-reg-eg}. Let $(\xi_{k})_{k \geq 0}$ be given by \eqref{eq:alg-2-rewritten}. Assume $V + \partial \hat{f}$ satisfies Assumption \ref{ass:Attouch-Czarnecki-inclusion}. Then
    \[
       \dist(0, \Phi_{k}(x_{k + 1})  + \partial g_{k}(x_{k + 1})) \leq  \| \Phi_{k}(x_{k + 1}) + \xi_{k + 1}\| = o \left( \frac{1}{\sqrt{k}}\right) \ \mbox{as} \ k\to +\infty.
    \]
\end{proposition}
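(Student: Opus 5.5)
The statement follows almost immediately from Proposition \ref{prop:descent-Ek} combined with part (iii) of Lemma \ref{lem:lower-upper-bound-nuk}. The argument has three steps.

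\textbf{Step 1: the first inequality.} By \eqref{eq:alg-2-rewritten} we have $\xi_{k+1} \in \partial g_k(x_{k+1})$. Hence $\Phi_k(x_{k+1}) + \xi_{k+1}$ is an element of the set $\Phi_k(x_{k+1}) + \partial g_k(x_{k+1})$. The distance from $0$ to this set is therefore at most the norm of this particular element, which gives the inequality $\dist(0, \Phi_k(x_{k+1}) + \partial g_k(x_{k+1})) \leq \|\Phi_k(x_{k+1}) + \xi_{k+1}\|$ for every $k \geq 0$. No regularity condition is needed for this step, since we only use membership of one explicit element.

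\textbf{Step 2: the rate.} Part (iii) of Lemma \ref{lem:lower-upper-bound-nuk} gives $\|\Phi_k(x_{k+1}) + \xi_{k+1}\|^2 \leq \mathcal{V}_k$ for all sufficiently large $k$. That lemma requires $sL_k \leq 1$, which holds eventually because $s < \frac{1}{L_V}$ and $L_k \downarrow L_V$. Under Assumption \ref{ass:Attouch-Czarnecki-inclusion}, Proposition \ref{prop:descent-Ek} yields $k\mathcal{V}_k \to 0$. Multiplying the inequality from part (iii) by $k$ then gives $k\|\Phi_k(x_{k+1}) + \xi_{k+1}\|^2 \to 0$. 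Taking square roots, this is exactly $\|\Phi_k(x_{k+1}) + \xi_{k+1}\| = o\left(\frac{1}{\sqrt{k}}\right)$.

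\textbf{Step 3: bookkeeping.} The only point needing care is that the bounds hold only for large $k$. This is harmless for an asymptotic statement, since the finitely many initial indices do not affect a little-$o$ conclusion. All the genuine difficulty has already been handled in Proposition \ref{prop:descent-Ek}.
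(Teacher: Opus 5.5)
Your proposal is correct and follows essentially the same route as the paper: bound $\|\Phi_{k}(x_{k+1}) + \xi_{k+1}\|^{2}$ by $\mathcal{V}_{k}$ for large $k$, then invoke $\mathcal{V}_{k} = o(1/k)$ from Proposition \ref{prop:descent-Ek}. The paper re-derives the first bound inline from \eqref{eq:alg-2-rewritten}, the monotonicity of $\Phi_{k}$ and $sL_{k} \leq 1$, which is exactly Lemma \ref{lem:lower-upper-bound-nuk}(iii); your justification of the distance inequality via $\xi_{k+1} \in \partial g_{k}(x_{k+1})$ spells out a step the paper leaves implicit.
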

\begin{proof}
We will show that for every sufficiently large $k \geq 0$, we have 
    \[
        \| \Phi_{k}(x_{k + 1}) + \xi_{k + 1}\|^{2} \leq \mathcal{V}_{k},
    \]
and thus the desired statement will automatically follow. Indeed, according to \eqref{eq:alg-2-rewritten}, and using that $L_{k}^{2} \leq \frac{1}{s^{2}}$ and the monotonicity of $\Phi_{k}$,  for every sufficiently large $k \geq 0$, we have 
    \begin{align*}
        \| \Phi_{k}(x_{k + 1}) + \xi_{k + 1}\|^{2} &= \Bigl\| \frac{1}{s} (x_{k} - x_{k + 1}) - \Phi_{k}(y_{k}) + \Phi_{k}(x_{k + 1})\Bigr\|^{2} \\
        &= \| \Phi_{k}(x_{k + 1}) - \Phi_{k}(y_{k})\|^{2} + \frac{1}{s^{2}} \| x_{k} - x_{k + 1}\|^{2} + \frac{2}{s} \langle \Phi_{k}(x_{k + 1}) - \Phi_{k}(y_{k}), x_{k} - x_{k + 1}\rangle \\
        &\leq L_{k}^{2} \| x_{k + 1} - y_{k}\|^{2} + \frac{1}{s^{2}} \| x_{k} - x_{k + 1}\|^{2} + \frac{2}{s} \langle \Phi_{k}(x_{k + 1}) - \Phi_{k}(y_{k}), x_{k} - x_{k + 1}\rangle \\
        &\leq \frac{1}{s^{2}} \| x_{k + 1} - y_{k}\|^{2} + \frac{1}{s^{2}} \| x_{k} - x_{k + 1}\|^{2} + \frac{2}{s} \langle \Phi_{k}(x_{k}) - \Phi_{k}(y_{k}), x_{k} - x_{k + 1}\rangle \\
        &= \mathcal{V}_{k}.
    \end{align*}
\end{proof}
We are now in a position to state and prove the main theorem of this section.
\begin{theorem}\label{thm:discrete-time-full-rates}
Let $(x_{k})_{k\geq 0}$ and $(y_{k})_{k\geq 0}$ be given by Algorithm \ref{alg:fb-reg-eg}, and let $x^{*}$ be an optimal solution to \eqref{eq:bilevel-where-inner-is-monotone-inclusion}. Let $(\xi_{\hat{f}, k})_{k\geq 1}$ be given by \eqref{eq:alg-2-rewritten-2}. The following statements hold:
    \begin{enumerate}[\rm (i)]
        \item If $V + \partial \hat{f}$ satisfies Assumption \ref{ass:Attouch-Czarnecki-inclusion} and $\delta > \frac{1}{2}$, then $\dist(0, V(x_{k}) + \partial {\hat f}(x_k)) \leq \| V(x_{k}) + \xi_{\hat{f}, k}\| = o \left(\frac{1}{\sqrt{k}}\right)$ as $k \to +\infty$.
        \item If $V + \partial \hat{f}$ satisfies Assumption \ref{ass:Attouch-Czarnecki-inclusion} and $\delta = \frac{1}{2}$, then 
         \begin{gather*}
            \dist(0, V(x_{k}) + \partial {\hat f}(x_k)) \leq \| V(x_{k}) + \xi_{\hat{f}, k}\| = \mathcal{O}\left(\frac{1}{\sqrt{k}}\right),\\ \langle x_{k} - x^{*}, V(x_{k})\rangle + \hat{f}(x_{k}) - \hat{f}(x^{*}) = o \left(\frac{1}{\sqrt{k}}\right) \quad \mbox{and} \quad
            H(x_{k}) \to H(x^{*})
        \end{gather*}
        as $k \to +\infty$. Furthermore, $(x_{k})_{k \geq 0}$ converges weakly to an optimal solution to the bilevel problem \eqref{eq:bilevel-where-inner-is-monotone-inclusion} as $k \to +\infty$.
        \item If $V + \partial \hat{f}$ satisfies Assumption \ref{ass:sharpness-inclusion} with exponent $\rho \in (1, 2)$ and $\frac{1}{\rho^{*}} < \delta < \frac{1}{2}$, where $\frac{1}{\rho} + \frac{1}{\rho^{*}} = 1$, then
        \begin{gather*}
             \dist(0, V(x_{k}) + \partial {\hat f}(x_k)) \leq \| V(x_{k}) + \xi_{\hat{f}, k}\| = \mathcal{O}\left(\frac{1}{k^{\delta}}\right), \\ 
             \langle x_{k} - x^{*}, V(x_{k})\rangle + \hat{f}(x_{k}) - \hat{f}(x^{*}) = o \left(\frac{1}{\sqrt{k}}\right) \quad \mbox{and} \quad
            | H(x_{k}) - H(x^{*})| = o \left( \frac{1}{k^{\frac{1}{2} - \delta}}\right)
        \end{gather*}
         as $k \to +\infty$. Furthermore, $(x_{k})_{k \geq 0}$ converges weakly to an optimal solution to the bilevel problem \eqref{eq:bilevel-where-inner-is-monotone-inclusion} as $k \to +\infty$.
    \end{enumerate}
\end{theorem}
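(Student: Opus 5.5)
The proof will mirror that of Theorem \ref{thm:continuous-time-full-rates}, with $\Phi_k+\xi_{k+1}$ in place of $\Phi_t(x(t))$ and $F$, $H$ in place of $\langle x-x^*,V(x)\rangle$, $h$.

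\textbf{Setup and rate for $\xi$.} In case (iii), the preceding Remark shows that Assumption \ref{ass:sharpness-inclusion} with $\frac1{\rho^*}<\delta$ implies Assumption \ref{ass:Attouch-Czarnecki-inclusion}. Hence in all three cases we may use Lemma \ref{lem:lim-xk-exists-fb-eg} and Proposition \ref{prop:rate-full-operator}. They give that $\lim_k\|x_k-x^*\|$ exists for every optimal $x^*$, that $(x_k)$ and $(y_k)$ are bounded, and that
\[
\|\Phi_k(x_{k+1})+\xi_{k+1}\|=o(k^{-1/2}).
\]
Write $\Phi_k(x_{k+1})+\xi_{k+1}=V(x_{k+1})+\xi_{\hat f,k+1}+\varepsilon_k(\nabla h(x_{k+1})+\xi_{\hat h,k+1})$. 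The last bracket is bounded, since $\nabla h$ is Lipschitz and $\partial\hat h$ maps bounded sets to bounded sets. Shifting the index by one ($\varepsilon_{k-1}\asymp\varepsilon_k$) gives $\|V(x_k)+\xi_{\hat f,k}\|=o(k^{-1/2})+\mathcal O(k^{-\delta})$. This is $o(k^{-1/2})$ when $\delta>\frac12$, which settles (i), and $\mathcal O(k^{-\delta})$ when $\delta\le\frac12$.

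\textbf{Key inequality.} Combine the subgradient inequality for $\hat f$ at $x_{k+1}$ with $\xi_{\hat f,k+1}$, the one for $\hat h$ with $\xi_{\hat h,k+1}$, and the gradient inequality for $h$. This yields
\[
F(x_{k+1})+\varepsilon_k(H(x_{k+1})-H(x^*))\le\langle x_{k+1}-x^*,\Phi_k(x_{k+1})+\xi_{k+1}\rangle\le\|x_{k+1}-x^*\|\,\|\Phi_k(x_{k+1})+\xi_{k+1}\|.
\]
Multiplying by $\sqrt{k}$, the right-hand side $q_k\to0$. This is the discrete analogue of \eqref{eq:upper-rate-for-h}. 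A lower bound is also needed: by Lemma \ref{lem:assumptions-inclusion}(i), $-F(x)-\varepsilon(H(x)-H(x^*))\le F^*(\varepsilon p^*)-\sigma(\varepsilon p^*)$, and under sharpness this is $\mathcal O(\varepsilon^{\rho^*})$ by Lemma \ref{lem:assumptions-inclusion}(ii).

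\textbf{Weak convergence via Opial.} Let $\bar x$ be a weak cluster point, $x_{k_j}\rightharpoonup\bar x$. Since $V(x_{k_j})+\xi_{\hat f,k_j}\to0$ and $V+\partial\hat f$ is maximally monotone (sum of a Lipschitz monotone operator and a subdifferential), its graph is weak-strong closed, so $\bar x\in\zer(V+\partial\hat f)$. For $\delta\le\frac12$, $\sqrt{k}\varepsilon_k$ is bounded away from $0$. Because $F\ge0$, the key inequality gives $H(x_{k+1})-H(x^*)\le q_k/(\sqrt k\varepsilon_k)\to0$ in the limsup sense. Weak lower semicontinuity of $H$ then gives $H(\bar x)\le H(x^*)$, so $\bar x$ is optimal, and Opial's lemma gives weak convergence to some $x^\diamond$.

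\textbf{Rates.} In case (ii), the subgradient inequality $H(x_{k+1})-H(x^\diamond)\ge\langle -p^\diamond, x_{k+1}-x^\diamond\rangle$ with $p^\diamond\in-\partial H(x^\diamond)$ (by Assumption \ref{ass:standing-disc}(ii)) has right-hand side tending to $0$ by weak convergence. Inserting this into the key inequality with $\sqrt k\varepsilon_k=c\sqrt{k}/\sqrt{k+a}$ yields $\sqrt k F(x_{k+1})\to0$, and then $H(x_k)\to H(x^*)$.

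In case (iii), first $F(x_{k+1})\le\|x_{k+1}-x^*\|\,\|V(x_{k+1})+\xi_{\hat f,k+1}\|=\mathcal O(k^{-\delta})$. This uses $\langle x-x^*,\xi_{\hat f}\rangle\ge\hat f(x)-\hat f(x^*)$ and needs $\xi_{\hat f,k}$ to be the subgradient actually produced by the algorithm. Next, bootstrap as in Theorem \ref{thm:continuous-time-full-rates}. The second bound in Lemma \ref{lem:assumptions-inclusion}(ii), namely $H(x^*)-H(x)\le\|p^*\|(\rho F(x)/\tau)^{1/\rho}$, together with the key inequality gives
\[
\sqrt kF(x_{k+1})\le\sup q+C\sqrt k\,k^{-\delta}F(x_{k+1})^{1/\rho}.
\]
Iterating the exponents $\delta_n=\delta(1+\rho^{-1}+\dots+\rho^{-n})\to\delta\rho^*>\frac12$ gives $F=\mathcal O(k^{-1/2})$. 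Then $\sqrt k\varepsilon_k(H(x^*)-H(x_{k+1}))=\mathcal O(k^{\frac1{2\rho^*}-\delta})\to0$, which gives $F=o(k^{-1/2})$ and a two-sided bound on $\sqrt k\varepsilon_k(H(x_{k+1})-H(x^*))$, hence $|H(x_k)-H(x^*)|=o(k^{\delta-\frac12})$.

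The main obstacle is the bookkeeping of the index shift between $x_{k+1}$ and $\varepsilon_k$, and identifying $\xi_{\hat f,k}$ as a genuine subgradient so that $F(x_k)$ is controlled by the residual. Both are routine since $\varepsilon_{k-1}/\varepsilon_k\to1$.
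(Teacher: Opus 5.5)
Your proposal is correct and follows essentially the same route as the paper: the residual rate from Proposition~\ref{prop:rate-full-operator} with the bounded perturbation $\varepsilon_{k}(\nabla h(x_{k+1}) + \xi_{\hat h, k+1})$, the key inequality $\sqrt{k}\,[F(x_{k+1}) + \varepsilon_k(H(x_{k+1}) - H(x^{*}))] \le q_k \to 0$, Opial's lemma, and in case (iii) the same bootstrapping of exponents $\delta_n \to \delta\rho^{*}$. You also make explicit a step the paper leaves implicit: in case (iii), Assumption~\ref{ass:sharpness-inclusion} with $\delta > 1/\rho^{*}$ yields Assumption~\ref{ass:Attouch-Czarnecki-inclusion}, which is what allows Lemma~\ref{lem:lim-xk-exists-fb-eg} and Proposition~\ref{prop:rate-full-operator} to be invoked there.
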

\begin{proof}
    According to Proposition \ref{prop:rate-full-operator}, we have
    \[
        \Bigl\| (V(x_{k}) + \xi_{\hat{f}, k}) + \varepsilon_{k - 1} (\nabla h(x_{k}) + \xi_{\hat{h}, k})\Bigr\| = \| \Phi_{k - 1}(x_{k}) + \xi_{k}\| = o\left( \frac{1}{\sqrt{k}}\right) 
    \]
    as $k\to +\infty$. Due to our standing assumption, the sequence $(\nabla h(x_{k}) + \xi_{\hat{h}, k})_{k\geq 1}$ is bounded, which gives $\| V(x_{k}) + \xi_{\hat{f}, k}\| = o\left(\frac{1}{\sqrt{k}}\right)$ as $k\to +\infty$ when $\delta > \frac{1}{2}$, thus (i) holds. When $\delta \leq \frac{1}{2}$, i.e., in cases (ii) and (iii), we get instead $\| V(x_{k}) + \xi_{\hat{f}, k}\| = \mathcal{O}\left(\frac{1}{k^{\delta}}\right)$ as $k\to +\infty$. Moreover, by using the subgradient inequality on $\hat{f}$ and $\hat{h}$ and the gradient inequality on $h$, for every $k\geq 1$, we obtain
    \begin{align}
        &\rph{\leq} k^{\frac{1}{2}} \varepsilon_{k - 1} (H(x_{k}) - H(x^{*})) \nonumber\\ 
        &\leq k^{\frac{1}{2}}F(x_{k}) + k^{\frac{1}{2}} \varepsilon_{k - 1} (H(x_{k}) - H(x^{*})) \nonumber\\
        &= k^{\frac{1}{2}} \Bigl[\langle x_{k} - x^{*}, V(x_{k})\rangle + \hat{f}(x_{k}) - \hat{f}(x^{*}) + \varepsilon_{k - 1} (h(x_{k}) - h(x^{*})) + \varepsilon_{k - 1}(\hat{h}(x_{k}) - \hat{h}(x^{*})) \Bigr] \nonumber \\
        &\leq k^{\frac{1}{2}} \Bigl[ \langle x_{k} - x^{*}, V(x_{k})\rangle + \langle x_{k} - x^{*}, \xi_{\hat{f}, k}\rangle + \varepsilon_{k - 1} \langle x_{k} - x^{*}, \nabla h(x_{k})\rangle + \varepsilon_{k - 1} \langle x_{k} - x^{*}, \xi_{\hat{h}, k}\rangle\Bigr] \nonumber \\
        &= k^{\frac{1}{2}} \langle x_{k} - x^{*}, \Phi_{k - 1}(x_{k}) + \xi_{k}\rangle \nonumber \\
        &\leq k^{\frac{1}{2}} \| x_{k} - x^{*}\| \| \Phi_{k - 1}(x_{k}) + \xi_{k}\| =: q_{k} \to 0 \label{eq:upper-rate-for-H-inclusion}
    \end{align}
    as $k\to +\infty$, according to Proposition \ref{prop:rate-full-operator} and the fact that $(x_{k})_{k\geq 0}$ is bounded. Since we know already that $\lim_{k\to +\infty} \| x_{k} - x^{*}\|$ exists for an arbitrary solution $x^{*}$ to \eqref{eq:bilevel-where-inner-is-monotone-inclusion}, the first condition of Opial's Lemma is verified. Let $\overline{x}$ be a weak sequential cluster point of $(x_{k})_{k\geq 0}$, say $x_{k_{\ell}} \rightharpoonup \overline{x}$ as $\ell \to +\infty$ for some subsequence $(x_{k_{\ell}})_{\ell\geq 0}$. Since $V(x_{k}) + \xi_{\hat{f}, k} \to 0$ as $k\to +\infty$ and $\graph (V + \partial \hat{f})$ is closed in $\cH^{\text{weak}} \times \cH^{\text{strong}}$, on account of $V + \partial \hat{f}$ being maximally monotone, we obtain that $\overline{x} \in \zer (V + \partial \hat{f})$, so $\overline{x}$ is feasible for problem \eqref{eq:bilevel-where-inner-is-monotone-inclusion}. In cases (ii) and (iii), from \eqref{eq:upper-rate-for-H-inclusion}, the weak lower semicontinuity of $H$ and the fact that $k^{\frac{1}{2}} \varepsilon_{k - 1}$ remains bounded away from zero as $k\to +\infty$, we derive 
    \[
        H(\overline{x}) \leq \liminf_{\ell\to +\infty} H(x_{k_{\ell}}) \leq \liminf_{\ell \to +\infty} \left( \frac{q_{k_{\ell}}}{k_{\ell}^{\frac{1}{2}} \varepsilon_{k_{\ell} - 1}} + H(x^{*})\right) = H(x^{*}),
    \]
    which means that $\overline{x}$ is a solution to \eqref{eq:bilevel-where-inner-is-monotone-inclusion}. This verifies the second condition of Opial's Lemma, so we have shown the weak convergence statements in (ii) and (iii). We now proceed to show the convergence rates, where we must distinguish between each case. 

    (ii) We already know that $\| V(x_{k}) + \xi_{\hat{f}, k}\| = \mathcal{O}\left(\frac{1}{k^{\delta}}\right)$ as $k\to +\infty$. Since the weak convergence of iterates has already been established, let $x^{\diamond}$ be such that $x_{k} \rightharpoonup x^{\diamond}$ as $k\to +\infty$. In particular, we know that $H(x^{\diamond}) = H(x^{*})$. From \eqref{eq:upper-rate-for-H-inclusion} and the subgradient inequality for $\xi^{\diamond} \in \partial H(x^{\diamond})$, for every $k \geq 0$, we obtain
    \[
        k^{\frac{1}{2}} F(x_{k}) + k^{\frac{1}{2}}\varepsilon_{k - 1} \langle x_{k} - x^{\diamond}, \xi^{\diamond}\rangle \leq k^{\frac{1}{2}} F(x_{k}) + k^{\frac{1}{2}}\varepsilon_{k - 1} (H(x_{k}) - H(x^{\diamond})) \leq q_{k}.
    \]
    The definition of weak convergence and the fact that $(k^{\frac{1}{2}} \varepsilon_{k - 1})_{k\geq 1}$ remains bounded together with $\lim_{k\to +\infty} q_{k} = 0$ gives $F(x_{k}) = o \left(\frac{1}{\sqrt{k}}\right)$ as $k\to +\infty$. Plugging this back into the previous inequality yields gives $ H(x_{k}) \to H(x^{\diamond}) = H(x^{*})$ as $k\to +\infty$. 

    (iii) Again, we know that $ \| V(x_{k}) + \xi_{\hat{f}, k}\| = \mathcal{O}\left(\frac{1}{k^{\delta}}\right)$ as $k\to +\infty$, which also yields $F(x_{k}) = \mathcal{O}\left(\frac{1}{k^{\delta}}\right)$ as $k\to +\infty$. Using our definition for $q_{k}$ from \eqref{eq:upper-rate-for-H-inclusion}, for every $k \geq 1$, we may write
    \[
        k^{\frac{1}{2}} | \langle x_{k} - x^{*}, \Phi_{k - 1}(x_{k}) + \xi_{k}\rangle| \leq q_{k} \leq \sup_{k} q_{k} < +\infty. 
    \]
    We use the subgradient inequality for $\nabla h(x_{k}) + \xi_{\hat{h}, k} \in \partial H(x_{k})$ and $\xi_{\hat{f}, k} \in \partial \hat{f}(x_{k})$, recall that $\xi_{k} = \xi_{\hat{f}, k} + \varepsilon_{k - 1} \xi_{\hat{h}, k}$ and write, for every $k \geq 1$, 
    \begin{align*}
        &\rph{\leq} k^{\frac{1}{2}} \varepsilon_{k - 1} (H(x_{k}) - H(x^{*})) - \sup_{k} q_{k} \\
        &\leq k^{\frac{1}{2}} F(x_{k}) + k^{\frac{1}{2}} \varepsilon_{k - 1} (H(x_{k}) - H(x^{*})) - \sup_{k} q_{k} \\
        &= k^{\frac{1}{2}} \Bigl[\langle x_{k} - x^{*}, V(x_{k})\rangle + \hat{f}(x_{k}) - \hat{f}(x^{*})\Bigr] + k^{\frac{1}{2}} \varepsilon_{k - 1} (H(x_{k}) - H(x^{*})) - \sup_{k} q_{k} \\
        &\leq k^{\frac{1}{2}} \langle x_{k} - x^{*}, V(x_{k}) + \xi_{\hat{f}, k}\rangle + k^{\frac{1}{2}} \varepsilon_{k - 1} \langle x_{k} - x^{*}, \nabla h(x_{k}) + \xi_{\hat{h}, k}\rangle - k^{\frac{1}{2}} \langle x_{k} - x^{*}, \Phi_{k - 1}(x_{k}) + \xi_{k}\rangle \\
        &= 0. 
    \end{align*}
    From here, the bootstrapping argument used to improve the rate from $\mathcal{O}\left(\frac{1}{k^{\delta}}\right)$ to $\mathcal{O}\left( \frac{1}{\sqrt{k}}\right)$ for $F(x_{k})$ as $k\to +\infty$ works exactly as in the proof of Theorem \ref{thm:continuous-time-full-rates}. In conjunction with \eqref{eq:upper-rate-for-H-inclusion} and Lemma \ref{lem:assumptions-inclusion}, this produces
    \[
        F(x_{k}) = o\left( \frac{1}{\sqrt{k}}\right) \quad \text{and} \quad | H(x_{k}) - H(x^{*})| = o\left( \frac{1}{k^{\frac{1}{2} - \delta}}\right)
    \]
    as $k\to +\infty$.
\end{proof}

\subsection{Applying the algorithm to the bilevel optimization problem with a structured convex minimization problem at the lower level}\label{subsec:fenchel-bilevel}

In the introduction, we rewrote the structured convex minimization problem \eqref{eq:inner-level-Fenchel-dual}, provided a suitable regularity condition holds, in the form of a monotone inclusion \eqref{eq:bilevel-where-inner-is-monotone-inclusion}, see \eqref{eq:Fenchel-bilevel-equivalent}. If we apply Algorithm \ref{alg:fb-reg-eg} to the bilevel optimization problem having this monotone inclusions at the lower level, we obtain 
\begin{equation}\label{alg:algorithm-applied-to-fenchel-bilevel}
    (\forall k \geq 0) \quad \left\{
        \begin{aligned}
            (\overline{x}_{k}, \overline{\lambda}_{k}) &:= \prox_{s (\hat{\bm{f}} + \varepsilon_{k}\hat{\bm{h}})} \Bigl[ (x_{k}, \lambda_{k}) - s\Bigl( \bm{V}(x_{k}, \lambda_{k}) + \varepsilon_{k} \nabla \bm{h}(x_{k}, \lambda_{k})\Bigr)\Bigr], \\
            (x_{k + 1}, \lambda_{k + 1}) &:= \prox_{s (\hat{\bm{f}} + \varepsilon_{k}\hat{\bm{h}})} \Bigl[ (x_{k}, \lambda_{k}) - s\Bigl( \bm{V}(\overline{x}_{k}, \overline{\lambda}_{k}) + \varepsilon_{k} \nabla \bm{h}(\overline{x}_{k}, \overline{\lambda}_{k})\Bigr)\Bigr].
        \end{aligned}
    \right. 
\end{equation}
Since $(\hat{\bm{f}} + \varepsilon_{k}\hat{\bm{h}}) (x, \lambda) = f(x) + g^{*}(\lambda) + \varepsilon_{k} \hat{h}(x)$, the proximal step can be split, for every $(x,\lambda) \in \cH \times \cG$ and every $k \geq 0$, as 
\[
    \prox_{s (\hat{\bm{f}} + \varepsilon_{k} + \hat{\bm{h}})} (x, \lambda) = \Bigl( \prox_{s (f + \varepsilon_{k}\hat{h})} (x), \prox_{s g^{*}}(\lambda) \Bigr). 
\]
It follows that \eqref{alg:algorithm-applied-to-fenchel-bilevel} reads
\begin{equation}\label{alg:algorithm-applied-to-fenchel-bilevel-expanded}
   (\forall k \geq 0) \quad \left\{
        \begin{aligned}
            \overline{x}_{k} &:= \prox_{s (f + \varepsilon_{k} \hat{h})} \Bigl( x_{k} - s \Bigl( \nabla m(x_{k}) + A^{*}\lambda_{k} + \varepsilon_{k} \nabla h(x_{k})\Bigr)\Bigr), \\
            \overline{\lambda}_{k} &:= \prox_{s g^{*}} (\lambda_{k} + sAx_{k}), \\
            x_{k + 1} &:= \prox_{s (f + \varepsilon_{k} \hat{h})} \Bigl( x_{k} - s \Bigl( \nabla m(\overline{x}_{k}) + A^{*}\overline{\lambda}_{k} + \varepsilon_{k} \nabla h(\overline{x}_{k})\Bigr)\Bigr), \\
            \lambda_{k + 1} &:= \prox_{s g^{*}} (\lambda_{k} + s A\overline{x}_{k}).
        \end{aligned}
    \right.
\end{equation}
Provided the inner object $ \bm{F}(x, \lambda) := \langle (x, \lambda) - (x^{*}, \lambda^{*}), \bm{V}(x, \lambda)\rangle + \hat{\bm{f}}(x, \lambda) - \hat{\bm{f}}(x^{*}, \lambda^{*})$ satisfies Assumption \ref{ass:Attouch-Czarnecki-inclusion} and/or Assumption \ref{ass:sharpness-inclusion}, we can derive rates for the appropriate optimality measures. First of all, for every $k \geq 1$, write 
\[
    \bm{\xi}_{\hat{\bm{f}}, k} := (\xi_{f, k}, \xi_{g^{*}, k}) \in \partial \hat{\bm{f}}(x_{k}, \lambda_{k}) = \Bigl( \partial f(x_{k}), \partial g^{*}(\lambda_{k})\Bigr),
\]
where
\begin{align}\label{eq:example-subdiff-primal}
\frac{x_k - x_{k+1} - s( \nabla m(\overline{x}_{k}) + A^{*}\overline{\lambda}_{k} + \varepsilon_{k} \nabla h(\overline{x}_{k}))}{s} := \xi_{f, k} + \varepsilon_{k-1}\xi_{\hat h, k} \in \partial f(x_k) + \varepsilon_{k-1} \partial \hat h(x_k)
\end{align}
and 
\begin{align}\label{eq:example-subdiff-dual}
\frac{\lambda_k - \lambda_{k+1} + s A\overline{x}_{k}}{s} := \xi_{g^{*}, k} \in \partial g^{*}(\lambda_{k}).
\end{align}

Let $k \geq 1$. We have 
\begin{align}
    \Bigl\| \nabla m(x_{k}) + A^{*} \lambda_{k} + \xi_{f, k}\Bigr\| + \| \xi_{g^{*}, k} - Ax_{k}\| &\leq \sqrt{2} \: \Bigl\| \Bigl( \nabla m(x_{k}) + A^{*} \lambda_{k} + \xi_{f, k}, \ \xi_{g^{*}, k} - Ax_{k}\Bigr)\Bigr\|_{\cH\times\cG} \nonumber\\
    &= \| \bm{V}(x_{k}, \lambda_{k}) + \bm{\xi}_{\hat{f}, k}\|_{\cH\times\cG}. \label{eq:example-rate-1}
\end{align}
Moreover, since $\xi_{f, k} \in \partial f(x_{k})$ and $\xi_{g^{*}, k} \in \partial g^{*}(\lambda_{k})$ (equivalently $\lambda_{k} \in \partial g(\xi_{g^{*}, k})$), we have 
\begin{align}
    &\rph{\leq}f(x_{k}) + g(\xi_{g^{*}, k}) + m(x_{k}) - \Bigl[ f(x^{*}) + g(Ax^{*}) + m(x^{*})\Bigr] \nonumber\\
    &\leq \langle x_{k} - x^{*}, \xi_{f, k}\rangle + \langle \xi_{g^{*}, k} - Ax^{*}, \lambda_{k}\rangle + \langle x_{k} - x^{*}, \nabla m(x_{k})\rangle \nonumber \\
    &= \Bigl\langle x_{k} - x^{*}, \ \xi_{f, k} + A^{*}\lambda_{k} + \nabla m(x_{k})\Bigr\rangle + \langle \xi_{g^{*}, k} - Ax_{k}, \lambda_{k}\rangle. \label{eq:example-rate-2}
\end{align}
We can also provide a lower bound. Since $-A^{*}\lambda^{*} \in \partial (f + m) (x^{*})$ and $\lambda^{*} \in \partial g(Ax^{*})$, 
\begin{align}
    &\rph{\geq}f(x_{k}) + g(\xi_{g^{*}, k}) + m(x_{k}) - \Bigl[ f(x^{*}) + g(Ax^{*}) + m(x^{*})\Bigr] \nonumber\\
    &\geq \langle x_{k} - x^{*}, - A^{*}\lambda^{*}\rangle + \langle \xi_{g^{*}, k} - Ax^{*}, \lambda^{*}\rangle \nonumber \\
    &= \langle \xi_{g^{*}, k} - Ax_{k}, \lambda^{*}\rangle. \label{eq:example-rate-3}
\end{align}
As a corollary of Theorem \ref{thm:discrete-time-full-rates}, we have the following result. 
\begin{theorem}
    Let $(x_{k}, \lambda_{k})_{k\geq 0}$ and $(\overline{x}_{k}, \overline{\lambda}_{k})_{k\geq 0}$ be given by \eqref{alg:algorithm-applied-to-fenchel-bilevel} (equivalently by \eqref{alg:algorithm-applied-to-fenchel-bilevel-expanded}) with stepsize $s$ chosen such that $0 < s < \frac{1}{\sqrt{2}( L_{\nabla m} + \| A\|)}$, let $(\xi_{f, k}, \xi_{g^{*}, k})_{k \geq 1}$ be given by \eqref{eq:example-subdiff-primal}-\eqref{eq:example-subdiff-dual}, and let $(x^*, \lambda^*)$ be an optimal solution to \eqref{eq:Fenchel-bilevel-equivalent}. The following statements hold: 
    \begin{enumerate}[\rm (i)]
        \item If $\bm{V} + \partial \hat{\bm{f}}$ satisfies Assumption \ref{ass:Attouch-Czarnecki-inclusion} and $\delta > \frac{1}{2}$, then 
        \begin{align*}
            \dist(0, \partial f(x_{k}) + A^{*}\lambda_{k} + \nabla m(x_{k}) \leq \| \xi_{f, k} + A^{*}\lambda_{k} + \nabla m(x_{k})\| = & \ o\left(\frac{1}{\sqrt{k}}\right),\\ \quad  \dist(0, \partial g^{*}(\lambda_{k}) -Ax_{k}) \leq  \| \xi_{g^{*}, k} - Ax_{k}\| = & \ o\left(\frac{1}{\sqrt{k}}\right), \\ 
            \Bigl| f(x_{k}) + g(\xi_{g^{*}, k}) + m(x_{k}) - (f(x^{*}) + g(Ax^{*}) + m(x^{*}))\Bigr| = & \ o\left(\frac{1}{\sqrt{k}}\right)
        \end{align*}
        as $k \to +\infty$.
        \item If $\bm{V} + \partial \hat{\bm{f}}$ satisfies Assumption \ref{ass:Attouch-Czarnecki-inclusion} and  $\delta = \frac{1}{2}$, then 
        \begin{align*}
            \dist(0, \partial f(x_{k}) + A^{*}\lambda_{k} + \nabla m(x_{k}) \leq \| \xi_{f, k} + A^{*}\lambda_{k} + \nabla m(x_{k})\| = & \ \mathcal{O}\left(\frac{1}{\sqrt{k}}\right),\\ \dist(0, \partial g^{*}(\lambda_{k}) -Ax_{k}) \leq \| \xi_{g^{*}, k} - Ax_{k}\| = & \ \mathcal{O}\left(\frac{1}{\sqrt{k}}\right), \\ 
            \Bigl| f(x_{k}) + g(\xi_{g^{*}, k}) + m(x_{k}) - (f(x^{*}) + g(Ax^{*}) + m(x^{*}))\Bigr| = & \ \mathcal{O}\left(\frac{1}{\sqrt{k}}\right) \\
            \text{and} \quad H(x_{k}) \to H(x^{*})
        \end{align*}
        as $k \to +\infty$. Furthermore, $(x_{k}, \lambda_{k})_{k \geq 0}$ converges weakly to an optimal solution to \eqref{eq:Fenchel-bilevel-equivalent}  as $k \to +\infty$.
        \item If $\bm{V} + \partial \hat{\bm{f}}$ satisfies Assumption \ref{ass:sharpness-inclusion} with exponent $\rho \in (1, 2)$ and $\frac{1}{\rho^{*}} < \delta < \frac{1}{2}$, where $\frac{1}{\rho} + \frac{1}{\rho^{*}} = 1$, then 
        \begin{align*}
            \dist(0, \partial f(x_{k}) + A^{*}\lambda_{k} + \nabla m(x_{k}) \leq \| \xi_{f, k} + A^{*}\lambda_{k} + \nabla m(x_{k})\| = & \ \mathcal{O}\left(\frac{1}{k^{\delta}}\right),\\ 
            \dist(0, \partial g^{*}(\lambda_{k}) -Ax_{k}) \leq \| \xi_{g^{*}, k} - Ax_{k}\| = & \ \mathcal{O}\left(\frac{1}{k^{\delta}}\right), \\ 
            \Bigl| f(x_{k}) + g(\xi_{g^{*}, k}) + m(x_{k}) - (f(x^{*}) + g(Ax^{*}) + m(x^{*}))\Bigr| = & \ \mathcal{O}\left(\frac{1}{k^{\delta}}\right) \\
            \text{and} \quad | H(x_{k}) - H(x^{*})| = o\left(\frac{1}{k^{\frac{1}{2} - \delta}}\right)
        \end{align*}
        as $k \to +\infty$. Furthermore, $(x_{k}, \lambda_{k})_{k \geq 0}$ converges weakly to an optimal solution to \eqref{eq:Fenchel-bilevel-equivalent}  as $k \to +\infty$.
    \end{enumerate}
\end{theorem}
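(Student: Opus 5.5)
The approach is to treat this as a direct corollary of Theorem \ref{thm:discrete-time-full-rates} applied in the product space $\cH\times\cG$ to the data $\bm{h},\hat{\bm{h}},\bm{V},\hat{\bm{f}}$ of \eqref{eq:Fenchel-bilevel-equivalent}. The results are then transferred back to the primal--dual quantities via \eqref{eq:example-rate-1}, \eqref{eq:example-rate-2} and \eqref{eq:example-rate-3}.

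\textbf{Assumption check.} First I verify that Assumption \ref{ass:standing-disc} holds in the product setting.
\begin{itemize}
\item $\bm{V}(x,\lambda)=(\nabla m(x)+A^*\lambda,-Ax)$ is monotone, since the skew part contributes zero.
\item $\bm{V}$ is Lipschitz with constant at most $\sqrt{2}(L_{\nabla m}+\|A\|)$. Hence $s<1/L_{\bm V}$ is guaranteed by the stepsize condition.
\item $\nabla\bm{h}(x,\lambda)=(\nabla h(x),0)$ is $L_{\nabla h}$-Lipschitz, and $\partial\hat{\bm h}=(\partial\hat h,0)$ maps bounded sets to bounded sets.
\end{itemize}
The regularity conditions (i)--(ii) are inherited from those assumed for the original problem. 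The iterates \eqref{alg:algorithm-applied-to-fenchel-bilevel-expanded} coincide with Algorithm \ref{alg:fb-reg-eg}. Moreover, \eqref{eq:example-subdiff-primal}--\eqref{eq:example-subdiff-dual} are exactly the components of the subgradient $\xi_k$ from \eqref{eq:alg-2-rewritten}, so $\bm{\xi}_{\hat{\bm f},k}$ plays the role of $\xi_{\hat f,k}$.

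\textbf{Residuals.} Theorem \ref{thm:discrete-time-full-rates} gives $\|\bm V(x_k,\lambda_k)+\bm\xi_{\hat{\bm f},k}\|=o(1/\sqrt k)$, $\mathcal O(1/\sqrt k)$ or $\mathcal O(1/k^\delta)$ in the three cases. By \eqref{eq:example-rate-1}, both component residuals inherit the same rates, and they dominate the corresponding $\dist$ quantities trivially.

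\textbf{Function-value gap.} For $x^*$ the primal component, \eqref{eq:example-rate-2} bounds the gap above by
\[
\|x_k-x^*\|\,\|\xi_{f,k}+A^*\lambda_k+\nabla m(x_k)\|+\|\lambda_k\|\,\|\xi_{g^*,k}-Ax_k\|,
\]
and \eqref{eq:example-rate-3} bounds it below by $-\|\lambda^*\|\,\|\xi_{g^*,k}-Ax_k\|$. Since Lemma \ref{lem:lim-xk-exists-fb-eg} gives boundedness of $(x_k,\lambda_k)$, the absolute value of the gap is bounded by a constant times the residual. It therefore inherits the residual rate in each case.

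\textbf{Remaining conclusions.} The convergence $H(x_k)\to H(x^*)$ in (ii), the rate $|H(x_k)-H(x^*)|=o(k^{\delta-1/2})$ in (iii), and the weak convergence of $(x_k,\lambda_k)$ are immediate, since $\bm H(x,\lambda)=H(x)$.

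\textbf{Main obstacle.} The only delicate point is the assumption check: confirming that the Lipschitz constant of $\bm V$ yields the stated stepsize bound, and that the regularity conditions transfer to the product space. The remaining steps are bookkeeping.
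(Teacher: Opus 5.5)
Your proposal is correct and is essentially the paper's argument: the statement is a corollary of Theorem~\ref{thm:discrete-time-full-rates} applied in $\cH\times\cG$ to $\bm{h},\hat{\bm{h}},\bm{V},\hat{\bm{f}}$. There, \eqref{eq:example-rate-1} passes the residual rates to the two components, and \eqref{eq:example-rate-2}--\eqref{eq:example-rate-3}, together with boundedness of the iterates, sandwich the function-value gap between constant multiples of those residuals. Two small points you use implicitly: in case (iii) the boundedness from Lemma~\ref{lem:lim-xk-exists-fb-eg} is available because sharpness implies Assumption~\ref{ass:Attouch-Czarnecki-inclusion} when $\delta > 1/\rho^{*}$; and \eqref{eq:example-subdiff-primal}--\eqref{eq:example-subdiff-dual} must be read with the index shifted by one (the displayed quotients are subgradients at $x_{k+1}$ and $\lambda_{k+1}$) for them to coincide with the components of $\xi_{k}$ from \eqref{eq:alg-2-rewritten}.
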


\begin{remark}
    Recall that 
    \[
        u^{*} := - A^{*}\lambda^{*} - \nabla m(x^{*}) \in \partial f(x^{*}) \quad \text{and} \quad Ax^{*} \in \partial g^{*}(\lambda^{*}).
    \]
    The inner object reads, for every $(x, \lambda) \in \cH \times \cH$,
    \begin{align*}
        \bm{F}(x, \lambda) &= \Bigl\langle (x, \lambda) - (x^{*}, \lambda^{*}), \bm{V}(x, \lambda)\Bigr\rangle + \hat{\bm{f}}(x, \lambda) - \hat{\bm{f}}(x^{*}, \lambda^{*}) \\
        &= \langle x - x^{*}, \nabla m(x) + A^{*}\lambda\rangle + \langle \lambda - \lambda^{*}, -Ax\rangle + f(x) - f(x^{*}) + g^{*}(\lambda) - g^{*}(\lambda^{*}) \\
        &= \Bigl[f(x) - f(x^{*}) - \langle x - x^{*}, u^{*}\rangle \Bigr] + \Bigl[ g^{*}(\lambda) - g^{*}(\lambda^{*}) - \langle \lambda - \lambda^{*}, Ax^{*}\rangle\Bigr]\\
        &\rph{=} + \Bigl\langle x - x^{*}, u^{*} + \nabla m(x) + A^{*}\lambda\Bigr\rangle + \langle \lambda - \lambda^{*}, Ax^{*} - Ax\rangle \\
        &= \Bigl[f(x) - f(x^{*}) - \langle x - x^{*}, u^{*}\rangle \Bigr] + \Bigl[ g^{*}(\lambda) - g^{*}(\lambda^{*}) - \langle \lambda - \lambda^{*}, Ax^{*}\rangle\Bigr] \\
        &\rph{=} \langle x - x^{*}, \nabla m(x) - \nabla m(x^{*})\rangle \\
        &= D_{f}(x, x^{*}) + D_{g^{*}}(\lambda, \lambda^{*}) + \langle x - x^{*}, \nabla m(x) - \nabla m(x^{*})\rangle \\
        &\geq D_{f}(x, x^{*}) + D_{g^{*}}(\lambda, \lambda^{*})
    \end{align*}
    if we are to interpret the $f$ and $g^{*}$ terms as Bregman divergences of sorts. This  suggests that in the sharpness condition Assumption \ref{ass:sharpness-inclusion}, we could have $D_{f}(x, x^{*}) + D_{g^{*}}(\lambda, \lambda^{*})$ on the right-hand side as opposed to $\bm{F}(x, \lambda)$.  
\end{remark}

\section{Numerical experiments}

In this section, we test Algorithm \ref{alg:fb-reg-eg} on two benchmark bilevel programs. Experiments are performed in Python on a 12thGen. Intel(R) Core(TM) i7–1255U, 1.70–4.70 GHz laptop with 16 Gb of RAM and are available for reproducibility at \href{https://github.com/echnen/bilevel-on-monotone}{https://github.com/echnen/bilevel-on-monotone}.
	
As a numerical baseline, we consider the Optimistic Extragradient Method for Hierarchical Optimization (OEG-H) introduced in \cite{DvurechenskyMarschnerShternStaudigl2026}. To the best of our knowledge, this is currently the only available method capable of addressing problems with the generality of \eqref{eq:bilevel-inclusion-intro}. The method can be viewed as the counterpart of Algorithm~\ref{alg:fb-reg-eg}: While Algorithm~\ref{alg:fb-reg-eg} extends Korpelevich's extragradient method \cite{Korpelevich1976} to the hierarchical setting, OEG-H is of Popov type \cite{Popov1980} and takes the form
\begin{equation}
	(\forall k \geq 0) \quad \left\{
		\begin{aligned}
			&y_k = \prox_{s g_k}\bigl(x_k - s \Phi_k(y_{k-1})\bigr)\,,\\
			&x_{k+1} = \prox_{s g_k}\bigl(x_k - s \Phi_k(y_k)\bigr)\,,
		\end{aligned}
	\right.
\end{equation}
and starting points $x_0, y_{-1}\in \cH$. In particular, OEG-H requires only a single evaluation of the Lipschitz continuous monotone operator per iteration, but it requires storing one auxiliary variable and is subject to the more restrictive step-size condition ($s \leq \frac{1}{4L_V}$)---the usual trade-off between Korpelevich- and Popov-type extragradient schemes.

\subsection{Comparison on an illustrative example}\label{sec:illustrative_example}

We consider the bilevel optimization problem \eqref{eq:bilevel-inclusion-intro} defined in $\cH=\R^d$, with $d\in \N$, as
\begin{equation}\label{eq:bilevel_toy_example}
	\begin{aligned}
		\min_{x \in \R^d} & \qquad \mathrm{tv}_1(x):= \sum_{i=1}^{d-1} |x_{i+1}-x_i|\,,\\
		\text{s.t.:} &  \qquad 0 = V(x) := Ax-b+x-P_C(x)\,,
	\end{aligned}
\end{equation}
where $P_C$ is the orthogonal projection onto the closed convex set $$C:=\left\{ x\in\mathbb{R}^d \mid x_i\leq \frac12 \quad\text{for all} \ i=m+1,\ldots,d \right\}\,,$$ for $m < d$ fixed, and $A$ is the skew-symmetric matrix defined by
\begin{equation}
A_{i+1,i}=1\,, \qquad A_{i,i+1}=-1\,,\qquad i\in \{1,\ldots,m-1\}\,,
\end{equation}
with all remaining entries equal to zero, and $b\in \R^d$ is zero except in $b_2=20$ and $b_{m-1} = -1$.  The nonzero tridiagonal block in $A$ has dimension $m$. The set of solutions to the inner problem is thus
\begin{equation}
\mathcal{S} := \{x \in \R^d \mid x_1 = 20\,, \ x_m = 1\,, x_i = 0 \ \text{for} \ i =2, \dots, m-1\,, \ x_i \leq 1/2 \ \text{for} \ i > m\}\,.
\end{equation}
The outer function, thus selects $x \in \mathcal{S}$ with $x_i = 1/2$ for all $i >m$, which is the explicit solution to the bilevel program \eqref{eq:bilevel_toy_example}. Note that \eqref{eq:bilevel_toy_example} fits the general framework \eqref{eq:bilevel-inclusion-intro} with $\hat h := \mathrm{tv}_1$, $h=0$ and $\hat f=0$. The proximity operator of $\mathrm{tv}_1$ can be computed in almost linear time \cite{Condat2013}, and $V$ is a nonlinear monotone and Lipschitz continuous operator.

\subsubsection{Comparison with baseline} To solve \eqref{eq:bilevel_toy_example}, we compare Bi-EG against OEG-H. For both the method, we set $\varepsilon_{k} := \frac{c}{(k + a)^{\delta}}$, for every $k \geq 0$, with $c, a > 0$ and $k\geq 0$, ranging across three choices of $\delta$ as $\delta=0.2, 0.35, 0.5$. We set near maximum step-size allowed setting $s = 0.99 / L$ and $s = 0.249 / L$ for Bi-EG and OEG-H, respectively. Additionally, we consider 20 different random initialization $x_0$, and set $y_{-1}= x_0$ for OEG-H. We run the two methods for $10^4$ iterations measuring i) inner gap, ii) inner residual and iii) outer objective gap, i.e., the three quantities in Point (iii) of Theorem \ref{thm:discrete-time-full-rates}, and show the results in Figure \ref{fig:illustrative_example}.

\begin{figure}[t]
	\centering
	\includegraphics[width=0.9\linewidth]{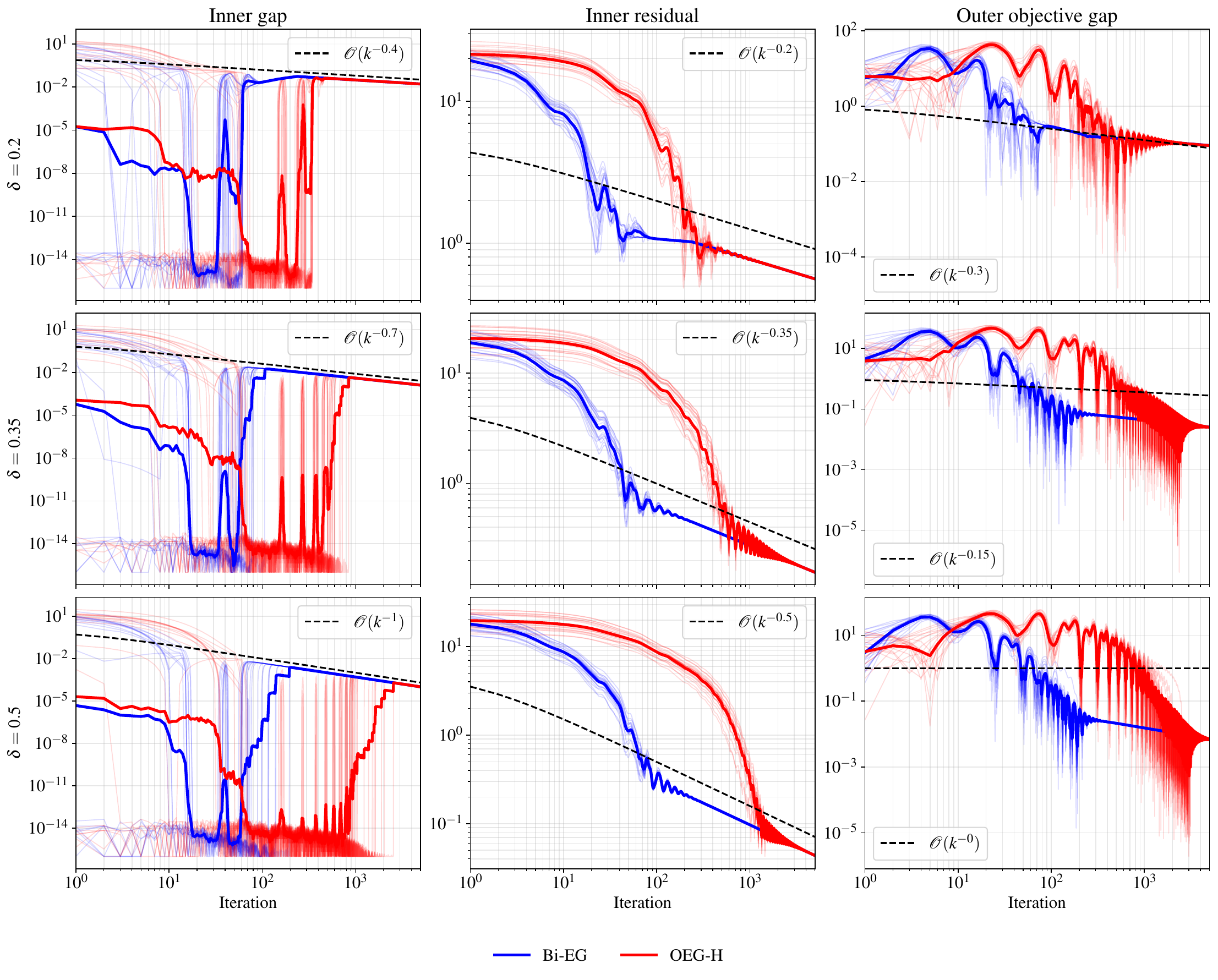}
	\caption{Comparison between Bi-EG and OEG-H to solve \eqref{eq:bilevel_toy_example}.}
	\label{fig:illustrative_example}
\end{figure}

Figure~\ref{fig:illustrative_example} shows that Bi-EG reaches high accuracy approximately one order of magnitude faster than its Popov-type counterpart, OEG-H, while the two methods exhibit comparable performance over longer time horizons. This faster initial convergence comes at the cost of one additional operator evaluation per iteration. On the other hand, Bi-EG requires storing only one iterative variable, whereas OEG-H stores two. Overall, the two methods appear numerically comparable once both computational cost and memory requirements are taken into account.

Additionally, Figure~\ref{fig:illustrative_example} shows that the theoretical convergence rates established for Bi-EG accurately capture its observed numerical behavior across the different choices of $\delta$. Interestingly, OEG-H exhibits qualitatively similar behavior in these experiments. However, to the best of our knowledge, no existing theoretical result establishes analogous convergence rates for OEG-H under the general geometric setting considered in Theorem \ref{thm:ergodic-rate-discrete}.

\subsubsection{Testing dependence on \texorpdfstring{$\delta$}{delta}}\label{sec:num_testing_dependence_on_delta}

We next investigate more closely the influence of the decay exponent $\delta$ in the regularization sequence $(\varepsilon_k)_{k \geq 0}$. We consider $20$ values of $\delta$ ranging uniformly in $(0.1, 0.5)$ and implement Bi-EG using the same initial point in each run. The parameter \(\delta\) governs the trade-off between the decay of the lower-level residual and the convergence of the upper-level objective, according to Theorem \ref{thm:ergodic-rate-discrete}.

This behavior is visible in Figure~\ref{fig:illustrative_example}. Indeed smaller values of $\delta$ favor inner convergence, as it is clear from the inner gap and inner residual decay. On the other hand, from the outer objective gap decay, we can also deduce that higher values of $\delta$ favor outer convergence, as residuals corresponding to larger exponents are actually larger than those with smaller ones. 

\begin{figure}[t]
	\includegraphics[width=\linewidth]{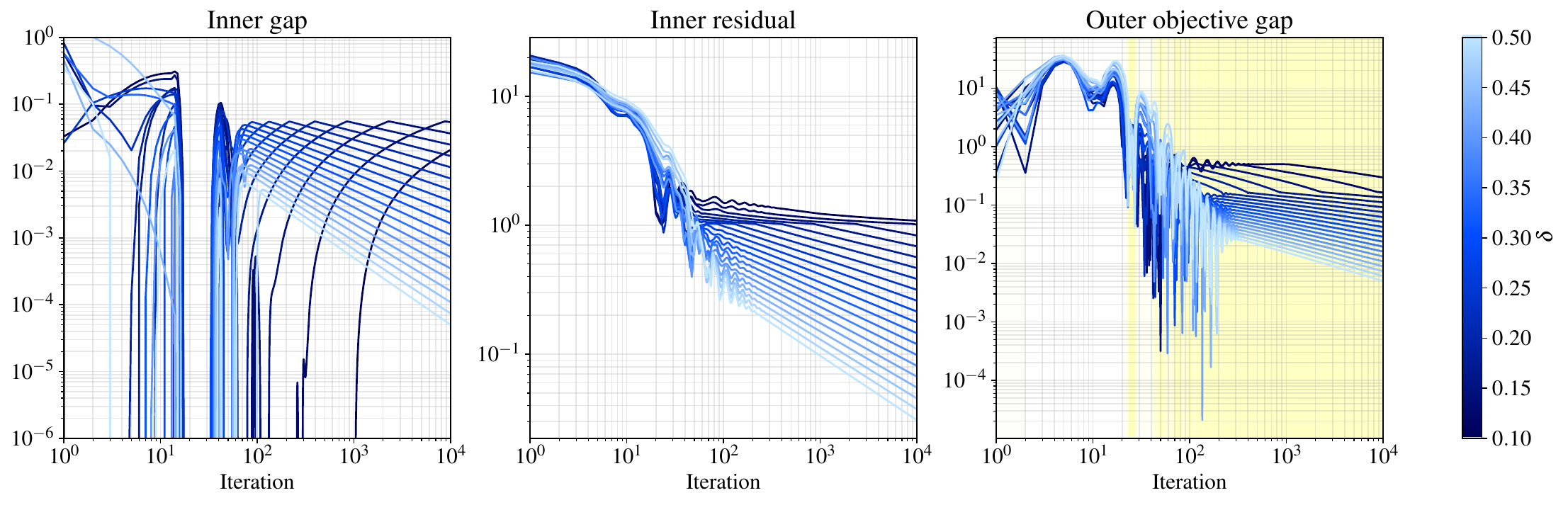}
	\caption{Dependence on $\delta$. In the outer-objective plot, the yellow regions indicate iterations for which the objective gap is negative; accordingly, the plotted $y$-values in these regions should be interpreted with the opposite sign.}
\end{figure}

\section{A secondary variational problem in optimal transport}

We now consider a problem closely related to the so-called \emph{secondary variational problem} from Optimal Transport (OT) that naturally lies in the bilevel optimization setting \cite[Chapter 3.1.2]{Santambrogio2015}.

We consider two uniform discrete probability measures supported on two regular $N$-gons:
\[
\mu:=\frac{1}{N}\sum_{i=1}^{N}\delta_{x_i},
\qquad
\nu:=\frac{1}{N}\sum_{j=1}^{N}\delta_{y_j},
\]
where $x_i,y_j\in\R^2$. More precisely, $x_i$ and $y_j$ lie on the unit circle at angles $2\pi i/N$ and $2\pi j/N+\pi/N$, respectively. Thus, the support of $\nu$ is obtained by rotating the support of $\mu$ by an angle $\pi/N$, see also Figure \ref{fig:ot-plans} for an illustration. 

In this example, both the optimal transport plans and the corresponding optimal potentials can be characterized in closed form. The optimal plans split the mass between adjacent target nodes in a cyclic fashion: Each source node sends an amount $m\in[0, 1/n]$ to the left adjacent node and the remaining mass $1/N-m$ to the right adjacent node. Among all optimal couplings between $\mu$ and $\nu$, we aim to select the one that minimizes the mass transported along a fixed connection, leading to the following bilevel problem:
\begin{equation}\label{eq:bilevel_ot}
	\left\{\begin{aligned}
	&\min_{\gamma \in \R^{n\times n}}  \quad \tfrac{1}{2}\gamma_{11}^2\\
	& \quad \text{s.t.:} \quad \gamma \in \text{OT}(\mu, \nu)\,,
	\end{aligned}\right. 
	\qquad \text{where} \qquad  
	\left\{
	\begin{aligned}
	&\text{OT}(\mu, \nu) := \argmin_{\gamma \in \R^{n\times n}} \qquad \sum_{i, j=1}^n c_{ij} \gamma_{ij}\\
	& \quad\text{s.t.:}  \quad \sum_{i=1}^n\gamma_{ij} = \nu_j\,, \quad \text{for all} \ j \in \{1, \dots, n\}\,,\\
	& \phantom{\quad \text{s.t.:}}\quad \sum_{j=1}^n\gamma_{ij} = \mu_i\,, \quad \text{for all} \ i \in \{1, \dots, n\}\,,\\
	&  \phantom{\quad\text{s.t.:}} \quad \gamma_{ij} \geq 0\,, \quad \text{for all} \ i, j \in \{1, \dots, n\}\,,
\end{aligned}\right. 
\end{equation} 
where $c_{ij}:= \|x_i - y_j\|^2$ for all $i, j$. By construction, minimizing $\gamma_{11}$ over $\operatorname{OT}(\mu,\nu)$ yields a unique bilevel-optimal transport plan, which is induced by a Monge map. The corresponding dual potential, however, need not be unique. To avoid overburdening the presentation, we omit the straightforward derivation of the optimal plan and the associated family of dual potentials.

We reformulate this problem as an instance of \eqref{eq:bilevel-inclusion-intro} by expressing the lower-level problem through its primal-dual optimality conditions. Let $A\in\R^{2n\times n^2}$ denote the matrix encoding the marginal constraints, and let $b\in\R^{2n}$ be the corresponding right-hand side. After vectorizing the transport plan, the lower-level problem reads
\begin{equation}
	\min_{\gamma\in\R^{n^2}}
	\left\{
	\langle c,\gamma\rangle
	+\iota_{\R^{n^2}_+}(\gamma)
	+\iota_{\{b\}}(A\gamma)
	\right\},
	\label{eq:ot-lower-level}
\end{equation}
where $c\in\R^{n^2}$ is the vectorized transportation-cost matrix. Problem \eqref{eq:ot-lower-level} fits the formulation in \eqref{eq:inner-level-Fenchel-dual} and can therefore be written as a monotone inclusion of the form \eqref{eq:bilevel-inclusion-intro}.

We apply both Bi-EG and OEG-H to the resulting bilevel inclusion. Figure~\ref{fig:ot-plans} compares the transport plan obtained by solving only the lower-level problem (with $H=0$) with the bilevel-optimal plans reconstructed by Bi-EG and OEG-H. The convergence behavior of the two bilevel methods is reported in Figure~\ref{fig:ot-comparison}.

The faster convergence of Bi-EG observables from the residual decay in Figure~\ref{fig:ot-comparison}, results in a more accurate reconstruction of the bilevel-optimal coupling in Figure~\ref{fig:ot-plans}. In particular, the Bi-EG solution is closer to the expected Monge structure of the selected optimal plan, whereas the OEG-H solution still displays residual mass splitting between adjacent target nodes.

\begin{figure}[t]
	\centering
	\includegraphics[width=0.9\linewidth]{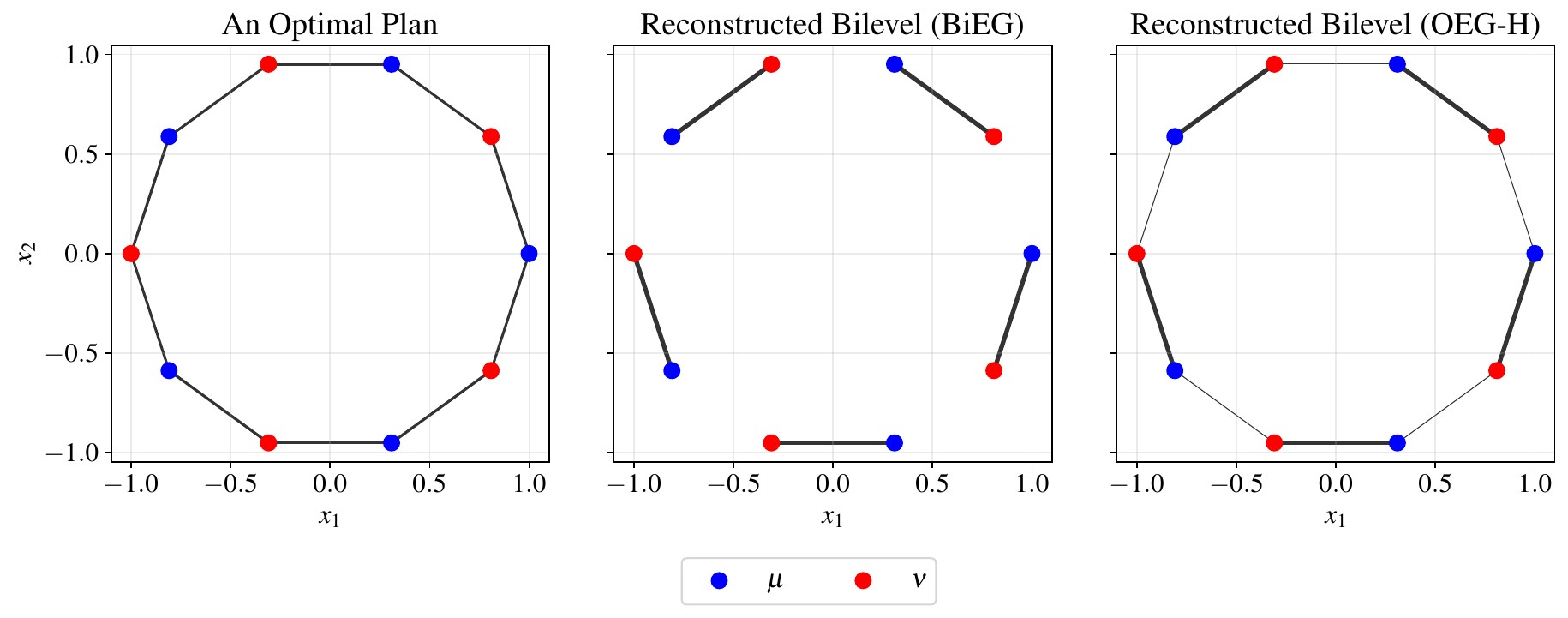}
	\caption{Reconstructed transport plans. From left to right: a solution to the lower-level optimal-transport problem, the bilevel solution reconstructed by Bi-EG, and the bilevel solution reconstructed by OEG-H. While the lower-level problem admits multiple optimal couplings, the upper-level objective selects the unique Monge plan that minimizes the mass transported along the prescribed connection.}
	\label{fig:ot-plans}
\end{figure}

\begin{figure}[t]
	\centering
	\includegraphics[width=.9\linewidth]{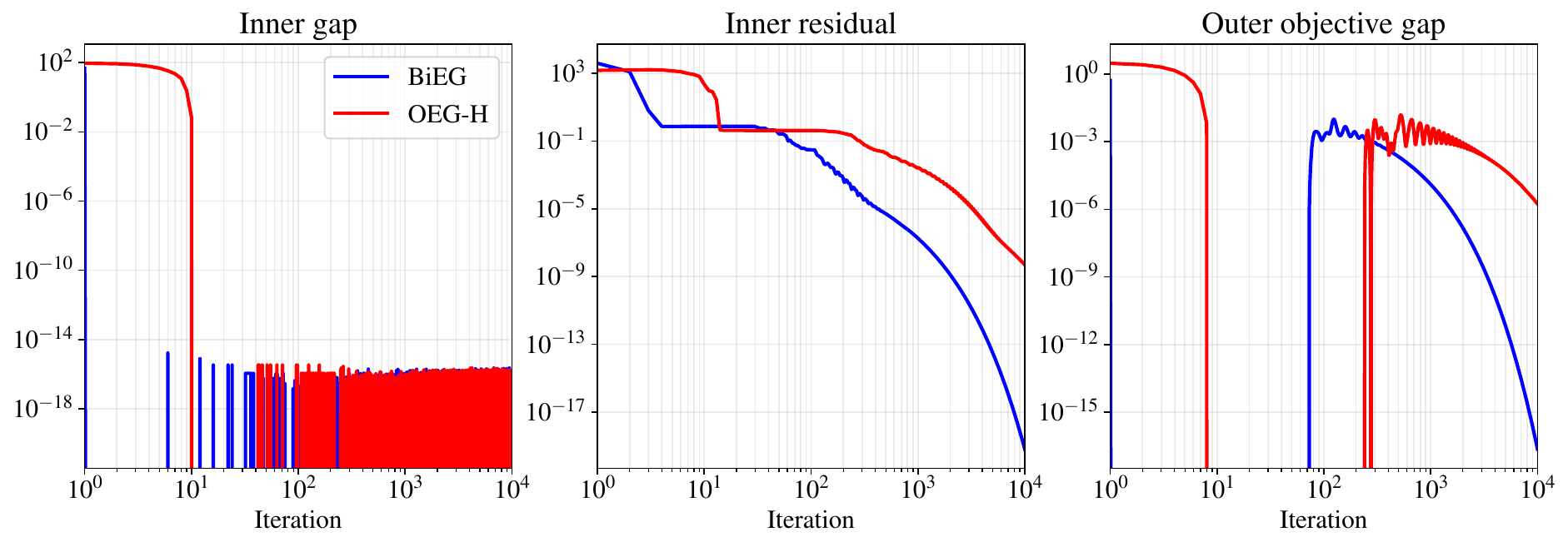}
	\caption{Convergence behavior of Bi-EG and OEG-H for the bilevel optimal-transport problem.}
	\label{fig:ot-comparison}
\end{figure}

\printbibliography

% --------------------------------------------------------------
%     You don't have to mess with anything below this line.
% --------------------------------------------------------------

\end{document}